\newcommand*\patchAmsMathEnvironmentForLineno[1]{%
  \expandafter\let\csname old#1\expandafter\endcsname\csname #1\endcsname
  \expandafter\let\csname oldend#1\expandafter\endcsname\csname end#1\endcsname
  \renewenvironment{#1}%
     {\linenomath\csname old#1\endcsname}%
     {\csname oldend#1\endcsname\endlinenomath}}%
\newcommand*\patchBothAmsMathEnvironmentsForLineno[1]{%
  \patchAmsMathEnvironmentForLineno{#1}%
  \patchAmsMathEnvironmentForLineno{#1*}}%
\newcommand{\mathbold}{\mathbf}
\newtheorem{thm}{Theorem}[section] \newtheorem{lem}[thm]{Lemma}
\newtheorem{corol}[thm]{Corollary} \newtheorem{prop}[thm]{Proposition}
\newtheorem{claim}[thm]{Claim}
\newtheorem{main}{Theorem}
\newtheorem*{thm*}{Theorem} \newtheorem*{cnj*}{Conjecture}
\theoremstyle{definition} \newtheorem{rmk}[thm]{Remark}
\newtheorem{eg}[thm]{Example} \newtheorem{dfn}[thm]{Definition}
\newtheorem*{conj*}{Conjecture}
\newcommand{\sO}{\mathscr{O}}
\newcommand{\sT}{\mathscr{T}}
\newcommand{\cExt}{\mathcal{E}xt}
\newcommand{\sE}{\mathscr{E}}
\newcommand{\sF}{\mathscr{F}}
\newcommand{\sK}{\mathscr{K}}
\newcommand{\sC}{\mathscr{C}}
\newcommand{\sL}{\mathscr{L}}
\newcommand{\sJ}{\mathscr{J}}
\newcommand{\W}{\mathrm{W}}
\newcommand{\cT}{\mathcal{T}}
\newcommand{\cS}{\mathcal{S}}
\newcommand{\cI}{\mathcal{I}}
\newcommand{\cHom}{\mathcal{H}om}
\DeclareMathOperator{\Ext}{Ext}
\DeclareMathOperator{\Hom}{Hom} 
\DeclareMathOperator{\im}{Im} \DeclareMathOperator{\cok}{Cok}
\DeclareMathOperator{\HH}{H}
\newcommand{\tra}{{}^{\mathrm{t}}}
\newcommand{\Z}{\mathbb Z} 
\newcommand{\F}{\mathbb F} \newcommand{\LL}{\mathbb L}
 \newcommand{\p}{\mathbb P}
\newcommand{\RRHHom}{\mathbold{R}\mathcal{H}om}
\newcommand{\RR}{\mathbold{R}}
\newcommand{\Db}{\mathbold{D}^b}
\DeclareMathOperator{\ts}{\otimes}
\newcommand{\mono}{\hookrightarrow}
\newcommand{\epi}{\twoheadrightarrow}
\newcommand{\xr}{\xrightarrow}
\newcommand{\kk}{\mathbf{k}}
\newcommand{\shift}{\xr{{}_{[1]}}}
\numberwithin{equation}{section}
\begin{document}
%\linenumbers
\linenumberdisplaymath

%%%%%%%%%%%%%%%%%%%%%%%%%%%%%%%%%%%%%%%%%%%%%%%%%%%%%%%%%%%%%%%%%%%%%%%%%%%%%%
%%%%%%%%%%%%%%%%%%%% Author(s) and Address %%%%%%%%%%%%%%%%%%%%%%%%%%%%%%%%%%%
%%%%%%%%%%%%%%%%%%%%%%%%%%%%%%%%%%%%%%%%%%%%%%%%%%%%%%%%%%%%%%%%%%%%%%%%%%%%%%

\title{Hyperplane arrangements of Torelli type}

\author{Daniele Faenzi}
\email{\tt daniele.faenzi@univ-pau.fr}
\address{Universit\'e de Pau et des Pays de l'Adour \\
  Avenue de l'Universit\'e - BP 576 - 64012 PAU Cedex - France}
\urladdr{{\url{http://univ-pau.fr/~faenzi/}}}

\author{Daniel Matei}
\email{{\tt Daniel.Matei@imar.ro}}
\address{Institute of Mathematics "Simion Stoilow" of the Romanian Academy
I.M.A.R., Bucharest, Romania \\  P.O. Box 1-764, RO-014700, Bucharest, Romania}
\urladdr{{\url{http://www.imar.ro/~dmatei/}}}

\author{Jean Vallès}
\email{{\tt jean.valles@univ-pau.fr}}
\address{Universit\'e de Pau et des Pays de l'Adour \\
  Avenue de l'Universit\'e - BP 576 - 64012 PAU Cedex - France}
\urladdr{{http://web.univ-pau.fr/~jvalles/jean.html}}

\keywords{Hyperplane arrangements, Torelli theorem, Unstable
  hyperplanes, Sheaf of logarithmic differentials}
\subjclass[2000]{14F05, 14C34, 52C35, 32S22}

%%% 14C34 Torelli problem
%%% 14F05 Vector bundles, sheaves, related constructions
%%% 32S22 Relations with arrangements of hyperplanes
%%% 52C35 Arrangements of points, flats, hyperplanes

\thanks{All authors partially supported by ANR-09-JCJC-0097-0
  INTERLOW.
  D.M. has been partially supported
  by grant CNCSIS PNII-IDEI 1189/2008.}
%% \thanks{}

\begin{abstract}
We give a necessary and sufficient condition in order for a hyperplane
arrangement to be of Torelli type, namely that it is recovered as 
the set of unstable hyperplanes of its Dolgachev sheaf of
logarithmic differentials.
Decompositions and semistability of non-Torelli arrangements are investigated.
\end{abstract}

\maketitle
%% \today

\section*{Introduction}

An arrangement of hyperplanes in $\p^n$ is the union $D$ of $\ell$
distinct hyperplanes $H_1,\ldots,H_\ell$ of $\p^n$, so
$H_i=\{f_i=0\}$, where $f_i$ is a linear form.
The topology, the geometry, and the combinatorial properties of the
pair $(\p^n,D)$ are interesting from many points of
view, we refer to \cite{orlik-terao:arrangements-hyperplanes} for a
comprehensive treatment. Let us only mention that Arnold,
in his foundational paper \cite{arnold:colored}, first used the
algebra of differential forms $d f_i/f_i$, to
give an explicit description of the cohomology ring of $\p^n \setminus
D$, an approach generalized by Brieskorn, see \cite{brieskorn:tresses}.
%making use of differential $1$-forms with logarithmic poles along $D$.

More generally, Deligne defined and extensively used in
\cite{deligne:equations} the sheaf $\Omega_X(\log D)$ of forms with logarithmic
poles along $D$, when $D$ is a normal crossing divisor of a smooth
variety $X$, while Saito in
\cite{saito:logarithmic} gave a definition of $\Omega_X(\log D)$ for
more general divisors. Anyway $\Omega_X(\log D)$ is the dual of the
sheafified derivation module, and as such it is a reflexive sheaf, in fact locally free
if $D$ is normal crossing.

Let again $D$ be a hyperplane arrangement with normal crossings (also
called a {\it generic} arrangement, namely $D$ is such that any $k$ hyperplanes meet along a $\p^{n-k}$).
The sheaf $\Omega_{\p^n}(\log(D))$ is then associated to $D$. The main
question asked (and solved) by Dolgachev and Kapranov in
\cite{dolgachev-kapranov:arrangements}, is whether 
one can reconstruct $D$ from
$\Omega_{\p^n}(\log(D))$. We say that $D$ is a {\it Torelli arrangement} in
this case (or simply $D$ is Torelli). They proved that if $\deg(D)\ge 2n+3$, then $D$ is Torelli
if and only if $D$ do not osculate a rational normal curve.
The result was extended to the range $\mathrm{deg}(D)\ge n+2$ in \cite{valles:instables}.

However this result only covers generic arrangement, while the
most interesting arrangements are far from being so.
%%% For instance, Torelli theorem certainly fails for many free arrangements,
%%% indeed any arrangement having the same intersection lattice as a free
%%% arrangements should be free, according to Terao's conjecture.
%%% For example, if $D$ consists of two sets of
%%% three concourant lines, then  $\Omega_{\p^2}(\log D) \cong
%%% \sO_{\p^2}(1)\oplus \sO_{\p^2}(2)$.
%%% So clearly  it is impossible to reconstruct $D$ from $\Omega_{\p^2}(\log D)$.
On the other hand, Catanese-Hosten-Khetan-Sturmfels in \cite{catanese-hochsten-khetan-sturmfels}
and Dolgachev in \cite{dolgachev:logarithmic} defined a subsheaf $\tilde{\Omega}_{\p^n}(\log(D))$ of
$\Omega_{\p^n}(\log(D))$,
fitting in the residue exact sequence:
\[
 0 \to \Omega_{\p^n} \to \tilde{\Omega}_{\p^n}(\log(D)) \to
 \bigoplus_{i=1, \ldots, \ell} \sO_{H_i} \to 0. 
\]
Dolgachev in \cite{dolgachev:logarithmic} formulated the Torelli
problem for the sheaf $\tilde{\Omega}_{\p^n}(\log(D))$, and proposed
the following conjecture:
\begin{conj*}[Dolgachev]
   Assume $\tilde{\Omega}_{\p^n}(\log(D))$ is a semi-stable sheaf in
   the sense of Gieseker. Then $D$ is Torelli if and only if the
   points given by the $H_i$'s in the dual $\p^n$ do not belong to a
   {stable rational curve} of degree $n$.
\end{conj*}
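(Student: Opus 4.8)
The plan is to reduce the Torelli property of $D$ to a computation of the locus $W\subset\pd^{\,n}$ of unstable hyperplanes of $\tilde{\Omega}_{\p^n}(\log D)$ via the point--hyperplane incidence correspondence, and then to read off that locus from the projective geometry of the dual configuration $Z=\{\hat H_1,\dots,\hat H_\ell\}\subset\pd^{\,n}$, where $\hat H_i$ is the point of $\pd^{\,n}$ corresponding to $H_i$. Write $I=\{(x,a)\in\p^n\times\pd^{\,n}:\langle a,x\rangle=0\}$ with the two projections $p,q$; the fibre of $q$ over $a$ is the hyperplane $H_a\cong\p^{n-1}$. By definition $a\in\pd^{\,n}$ is an unstable hyperplane of $\tilde{\Omega}_{\p^n}(\log D)$ when the cohomology of $\tilde{\Omega}_{\p^n}(\log D)|_{H_a}$ jumps in the relevant twist, so $W$ is the support of a coherent sheaf $\mathscr W$ on $\pd^{\,n}$, realized as a suitably twisted higher direct image of $p^{*}\tilde{\Omega}_{\p^n}(\log D)$ along $q$. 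The residue exact sequence shows that each $\hat H_i$ is unstable, so $D\subseteq W$ always; thus $D$ is Torelli exactly when $W=D$ as a set.

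The first real task is to compute $\mathscr W$. Applying $R^{\bullet}q_{*}p^{*}(-)$ (with the twist) to $0\to\Omega_{\p^n}\to\tilde{\Omega}_{\p^n}(\log D)\to\bigoplus_{i}\sO_{H_i}\to 0$, the $\Omega_{\p^n}$--term yields a fixed sheaf, computed once and for all from the Euler sequence and Bott vanishing; the $\bigoplus_i\sO_{H_i}$--term is computed fibrewise, because $p^{*}\sO_{H_i}$ restricts over $a$ to $\sO_{H_i\cap H_a}$, which degenerates precisely at $a=\hat H_i$. Bookkeeping these degenerations expresses $\mathscr W$, modulo the fixed part, through the ideal sheaf $\cI_Z\subset\sO_{\pd^{\,n}}$; and this is where semistability of $\tilde{\Omega}_{\p^n}(\log D)$ enters --- it forbids destabilizing sub-line-bundles and so bounds $\mathscr W$, forcing $W\setminus D$ to be at most a curve. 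The outcome I aim for is the criterion that $a\in W\setminus D$ if and only if there is a curve of degree $n$ and arithmetic genus $0$ spanning $\pd^{\,n}$ and containing $Z\cup\{a\}$.

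Granting this criterion, the equivalence follows. If $Z$ lies on a stable rational curve $C$ of degree $n$ (a connected nodal tree of rational normal curves of smaller degrees, hence projectively normal and arithmetically Cohen--Macaulay), then every $a\in C$ satisfies the criterion, so $W\supseteq C$; since $\dim C=1>0=\dim D$ this gives $W\supsetneq D$ and $D$ is not Torelli --- and note that this implication uses only the easy half of the criterion, not semistability. Conversely, if $D$ is not Torelli, choose $a_0\in W\setminus D$; the criterion produces a degree-$n$ curve $C\supseteq Z\cup\{a_0\}$, and since $Z$ already spans $\pd^{\,n}$ the curve $C$ is nondegenerate of degree $n$, so by the Castelnuovo bound it is connected of arithmetic genus $0$, reduced and nodal, i.e.\ a stable rational curve of degree $n$, and it contains $Z$. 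The two implications give the statement.

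The main obstacle is establishing the criterion above --- concretely, extracting from a cohomology jump at $a_0$ an honest reduced curve of degree \emph{exactly} $n$ passing through \emph{every} point of $Z$: the lower bound $\deg C\ge n$ comes from $Z$ spanning $\pd^{\,n}$, the upper bound $\deg C\le n$ from semistability, and it must be shown that no destabilizing phenomenon survives so that $C$ really meets all $\ell$ points rather than a proper subset of them. Complementary work is the verification that \emph{each} degeneration type of a stable degree-$n$ rational curve genuinely enlarges $W$, so that the criterion is sharp, together with the control of the scheme structure of $W$ and of the small-$n$, small-$\ell$ boundary cases in which the Bott vanishing invoked in the computation of $\mathscr W$ degrades.
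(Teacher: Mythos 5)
There is a genuine gap, and it is unavoidable: the statement you are proving is recorded in the paper as a \emph{conjecture}, and the paper itself shows that only one implication holds. The ``only if'' direction (points on a stable rational curve $\Rightarrow$ not Torelli) is proved there as a corollary of a more general theorem, while the ``if'' direction is \emph{refuted} by explicit examples (Examples \ref{NO} and \ref{NO2}) in which $\sF_Z=\tilde{\Omega}_{\p^n}(\log D_Z)$ is semistable --- even stable --- yet $Z$ fails to be Torelli without lying on any stable rational curve. So any complete proof of the biconditional must contain an error, and in your write-up the error is localized in the central criterion you ``aim for'': that $a\in W\setminus D$ if and only if $Z\cup\{a\}$ lies on a connected curve of degree $n$ and arithmetic genus $0$ spanning the dual space. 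The correct characterization (the paper's Lemmas \ref{sezione}--\ref{lemma-KW}) is that $a$ is unstable precisely when $Z$ lies in the rank-one locus of a $2\times n$ matrix of linear forms one of whose rows cuts out $a$. Such degeneracy loci are Kronecker--Weierstrass varieties: a rational curve of some degree $d\le n$ together with \emph{linear subspaces of arbitrary dimension} attached to it. They are not curves in general, and semistability of $\sF_Z$ does \emph{not} ``force $W\setminus D$ to be at most a curve'' in the way you need --- no destabilizing subsheaf is produced by the presence of a higher-dimensional linear component.

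Concretely, take $Z\subset\p_3$ to be five points on a conic $D$ spanning a plane $L_1$, plus two points on a line $L_0$ with $L_0\cap D=\emptyset$. Then $Z$ lies on the KW variety $L_0\cup L_1$, every point of $L_0$ is unstable, and $\sF_Z$ is strictly semistable (a small modification makes it stable); yet there is no stable rational curve of degree $3$ through $Z$: any such curve would have to contain the conic $D$ (five points determine it) and the line $L_0$, whose union is disconnected and hence not of arithmetic genus $0$. This also breaks your converse step, where you invoke the Castelnuovo bound to upgrade ``some degree-$n$ object through $Z\cup\{a_0\}$'' to a stable rational curve: the object produced by the instability of $a_0$ is a KW variety, not a curve, and no genus or connectedness argument applies to it. Your first implication (stable rational curve $\Rightarrow$ not Torelli) is sound in outline and matches the paper's corollary, but to make it rigorous you still need the section-theoretic instability criterion ($\HH^0(\p_n,\cS_y\ts\cI_Z)\neq 0$) rather than the fibrewise bookkeeping you sketch; the second implication should be abandoned, or rather replaced by the correct statement that non-Torelli is equivalent to containment in a Kronecker--Weierstrass variety.
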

A {\it stable rational curve} here means a connected curve of
arithmetic genus $0$ which is the union of $s$ smooth
rational curves $C_1,\ldots,C_s$, with $\deg(C_i)=d_i$ and $d_1+\cdots+d_s=n$, each $C_i$
spanning a $\p^{d_i}$, and the union of the $\p^{d_i}$'s spanning the
dual $\p^n$.
He also showed that the conjecture holds in the plane for up to $6$
points.

%%% Let $Z$ be a finite set of points, say $\ell$ points $z_1, \ldots, z_\ell$ in
%%% the dual space $\p_n$ of $\p^n$.
%%% Then $Z$ defines a {\it hyperplane arrangement} in $\p^n$, namely the
%%% union $D_Z$ of the corresponding hyperplanes $H_{z_1},\ldots,H_{z_\ell}$ 
%%% of $\p^n$.
%%% Attached to $Z$, we consider the subsheaf
%%% $\tilde{\Omega}_{\p^n}(\log D_Z)$ of the sheaf
%%% $\Omega_{\p^n}(\log D_Z)$ of $1$-forms on $\p^n$ with logarithmic
%%% poles along $D_Z$,
%%% introduced by Dolgachev in \cite{dolgachev:logarithmic}.
%%% We recall that $Z$ gives a {\it Torelli arrangement} if 
%%% $Z$ can be recovered from its 
%%% Dolgachev sheaf of
%%% logarithmic differentials
%%% $\tilde{\Omega}_{\p^n}(\log D_Z)$
%%% -- namely, $Z$ is recovered as the set of unstable hyperplanes of
%%%   $\tilde{\Omega}_{\p^n}^1(\log D_Z)$.

\vspace{0.2cm}

In this paper we study in detail the Torelli problem for the sheaf
$\tilde{\Omega}_{\p^n}(\log(D))$.
We denote by $Z$ a finite set of points, say $\ell$ points $z_1,
\ldots, z_\ell$, lying in
the dual space $\p_n$ of $\p^n$, and by $D_Z$ the union of the
corresponding hyperplanes $H_{z_1},\ldots,H_{z_\ell}$.
In order to state our result, we need to introduce what we call {\it Kronecker-Weierstrass
varieties} (a reason for this name will be apparent later on). 
If $(d,n_1,\ldots,n_s)$ is a string of $s+1$ integers such
that $n = d + n_1 \cdots + n_s$, we say that $Y \subset \p_n$ is a
{\it Kronecker-Weierstrass (KW) variety of type $(d;s)$} if
$Y = C \cup L_1 \cup \cdots \cup
L_s \subset \p_n$, where the $L_i$'s are linear
subspaces of dimension $1 \le n_i \le n-1$ and $C$ is a 
smooth rational curve of degree $d$, with $0 \leq d \leq n$ spanning
a linear space $L$ of dimension $d$ such that:
\begin{enumerate}[i)]
\item \label{I} 
  for all $i$,  $L \cap L_i$ is a single point which lies in $C$;
\item \label{II}
  the spaces $L_i$'s are mutually disjoint.
\end{enumerate}
In the case $d=0$ (so $C$ is reduced to a single point $y$), we
replace the conditions by the fact 
that all the linear spaces $L_i$ meet only at $y$.
The point $y$ in this case is called the distinguished point of $Y$.

We formulate now our main result. We give it here also for subschemes
with multiple structure, we will see how to make sense of this further on.
\begin{main} \label{main}
  Let $Z \subset \p_n$ be a finite-length, set-theoretically non-degenerate subscheme.

  Then $Z$ fails to be Torelli if and only if $Z$
  is contained in a KW variety $Y \subset \p_n$ of type
  $(d;s)$ whose distinguished point (for $d=0$) does not lie in $Z$.
\end{main}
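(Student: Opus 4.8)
The plan is to reformulate the Torelli property in terms of the scheme $\W(\tO_{\p^n}(\log D_Z))$ of unstable hyperplanes of the Dolgachev bundle, and then establish the two implications separately. From the residue sequence one sees that every hyperplane $H_z$ with $z\in Z$ is unstable, so the inclusion $Z\subseteq\W(\tO_{\p^n}(\log D_Z))$ always holds; with the scheme structure on the unstable locus fixed before the statement, $Z$ is by definition Torelli exactly when this inclusion is an equality. Hence the theorem amounts to the assertion that $\W(\tO_{\p^n}(\log D_Z))\supsetneq Z$ if and only if $Z$ lies on a KW variety $Y$ of type $(d;s)$ whose distinguished point, when $d=0$, is off $Z$. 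Before either direction I would record once and for all the cohomological meaning of ``$H_w$ unstable'': an unexpected section of a suitable twist of $\tO_{\p^n}(\log D_Z)|_{H_w}$, equivalently the drop of rank along $H_w$ of the multiplication map in a two-term presentation $\sO_{\p^n}(-1)\otimes A\to\sO_{\p^n}\otimes B$ of (a twist of) $\tO_{\p^n}(\log D_Z)$.

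For the ``if'' direction I would prove the sharper statement that $Z\subset Y$ already forces $Y\subseteq\W(\tO_{\p^n}(\log D_Z))$; since $\dim Y\ge1>0=\dim Z$ this yields non-Torelli at once. The mechanism: with respect to a flag adapted to the span $L=\langle C\rangle$ and to the linear parts $L_i$, the bundle $\tO_{\p^n}(\log D_Z)$ is an iterated extension of pieces pulled back from the components of $Y$ (a direct sum precisely when $d=0$ and the distinguished point is off $Z$). Over $L$ the relevant piece is, up to twist, the Schwarzenberger bundle of the rational normal curve $C$ on $\p^d$, whose unstable hyperplanes are exactly the points of $C$ — degenerating, when $Z\cap C$ is too small, to $\Omega_{\p^d}$, which has an even larger unstable locus; over each $L_i$ the piece is a pullback of a bundle on $\p^{n_i}$, such as $\Omega_{\p^{n_i}}$, all of whose hyperplanes are unstable. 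The unstable hyperplanes of these pieces remain unstable for $\tO_{\p^n}(\log D_Z)$, and when $d=0$ the hypothesis that the distinguished point is off $Z$ is exactly what makes the splitting, hence the escape from the finite set $Z$, go through. The real content of this half is the structural description of $\tO_{\p^n}(\log D_Z)$ for an arbitrary finite-length $Z$ — essentially the decomposition and semistability results announced in the abstract.

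For the ``only if'' direction, assume there is an unstable hyperplane $H_w$ with $w\notin Z$. The first step is a rigidity statement in the spirit of Dolgachev--Kapranov and Vall\`es: an unstable hyperplane outside $Z$ forces $\tO_{\p^n}(\log D_Z)$ to be a ``degenerate Schwarzenberger bundle'', i.e.\ to admit a presentation by a \emph{pencil} of matrices — a Kronecker module — rather than by a larger linear system; this goes through the residue sequence and a Beilinson/Bott-type computation applied to the extra section on $H_w$. Next, the Kronecker--Weierstrass normal form of this pencil splits it into a regular part, realized geometrically by a rational normal curve $C$ of degree $d$ in $\p_n$, together with nilpotent Jordan blocks, realized by linear subspaces $L_1,\dots,L_s$; conditions (i) and (ii) in the definition of a KW variety are precisely the gluing constraints between the blocks, and $Z$ lies on $C\cup L_1\cup\cdots\cup L_s=Y$. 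The case with no regular part is $d=0$, where all nilpotent blocks share one eigendirection $y$, and a direct check rules out $y\in Z$, which accounts for the distinguished-point clause. The main obstacle is precisely this chain: establishing the pencil rigidity, determining which Kronecker--Weierstrass invariants occur, matching each to a component of $Y$ satisfying (i) and (ii), and carrying the whole argument through in the presence of a multiplicity structure on $Z$. By comparison, the reduction to unstable hyperplanes and the constructions of the ``if'' direction are comparatively formal.
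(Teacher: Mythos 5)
Your outline of the ``only if'' direction is essentially the paper's argument: an unstable hyperplane $H_y$ with $y\notin Z$ is converted into a section over $\p_n$ of a syzygy sheaf $\cS_y$ twisted by $\cI_Z$, equivalently into a $2\times n$ pencil of linear forms on $\p_n$ whose rank-one locus contains $Z$ and one of whose rows defines $y$; the Kronecker--Weierstrass normal form of that pencil then produces the curve part and the linear parts of $Y$, the distinguished point being $y\notin Z$ when there is no regular block. You leave the key cohomological step as a black box (the identification of instability of $H_y$ with $\HH^{0}(\p_n,\cS_y\ts\cI_Z)\neq 0$, proved via Grothendieck duality and adjunction on the incidence variety), but the shape of this half is right.

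The ``if'' direction, however, contains a genuine error. You propose the sharper statement that $Z\subset Y$ forces $Y\subseteq\W(\sF_Z)$, by decomposing $\sF_Z$ into pieces supported over the components of $Y$ and claiming that unstable hyperplanes of each piece remain unstable for $\sF_Z$. That sharper statement is false: in Example \ref{NO} ($Z$ consisting of five points on a conic in a plane $L_1$ plus two points on a disjoint line $L_0$, with $Y=L_0\cup L_1$ of type $(1;1)$), the points of $L_1\setminus Z$ are \emph{not} unstable --- only the curve part $L_0$ consists of unstable points. Your mechanism fails because instability is detected by the top cohomology $\HH^{n-1}(H,\sE|_H(-n))$, which passes from a quotient of $\sE$ to $\sE$ but not from a subsheaf; in the filtration of Theorem \ref{thm:decomposition} the curve piece is the \emph{quotient} and the linear pieces are \emph{subsheaves}, so only the curve part is guaranteed to contribute unstable hyperplanes. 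Moreover that filtration requires hypotheses ($L_i$ equal to the span of $Z_i$, $Z$ smooth at the attachment points $y_i$, $Z_0$ disjoint from them) that an arbitrary $Z\subset Y$ need not satisfy, so it cannot serve as the engine of the ``if'' direction. The correct and much lighter route is to run the ``only if'' chain backwards: given $Y$ of type $(d;s)$ containing $Z$, write down in KW block form a pencil whose rank-one locus is $Y$ and whose first row cuts out a chosen point $y$ of the curve part (or the distinguished point when $d=0$); this yields a nonzero section of $\cS_y\ts\cI_Z$, hence $y$ is unstable, and since $C\setminus Z\neq\emptyset$ (resp.\ the distinguished point lies off $Z$) one obtains an unstable hyperplane outside $Z$. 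No decomposition or semistability input is needed for the main theorem.
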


The main ingredient that we bring in the proof is a functorial
definition of $\tilde{\Omega}_{\p^n}(\log D_Z)$ as the dualized direct
image of the sheaf of linear forms vanishing at $Z$ in $\p_n$, under
the natural point-hyperplane incidence variety. The key point is that this has to be taken
with a grain of salt, namely all functors have to be derived in order
to make the correspondence work smoothly.

%We will prove the above result as a consequence of a more general
%fact, see Theorem \ref{generale}, that takes into account 
%the case when $Z$ is not reduced. 

As a corollary of the theorem above, we get that if $Z$ is contained
in a stable rational curve in $\p_n$, then $Z$ is not Torelli, as
conjectured by Dolgachev.

As another corollary, we will see that the converse implication holds
on $\p^2$, even without the assumption that
$\tilde{\Omega}_{\p^n}(\log D_Z)$ is semistable. 
In higher dimension, this implication no longer holds, 
regardless of $\tilde{\Omega}_{\p^n}(\log D_Z)$ being semistable or not.
To understand why, one first remarks that in many examples
$Z$ is contained in
a KW variety $Y$ without lying on a stable rational curve.
Yet one has to prove semistability of $\tilde{\Omega}_{\p^n}(\log
D_Z)$ for some of these examples.
One way to do this is to provide a filtration of $\tilde{\Omega}_{\p^n}(\log D_Z)$
associated to the decomposition of $Y$ into irreducible components.
This is the content of Theorem \ref{thm:decomposition}.
Some exceptions to the ``if'' direction of Dolgachev's conjecture
are Example \ref{NO} and \ref{NO2}.

%%To mention roughly the idea of the proof, 
%%%More precisely, if $Z$ lies in a diagonal KW variety 
%%%$Y = C \cup L_1 \cup \cdots \cup
%%%L_s \subset \p_n$ as above, we prove that moving the piece of $Z$ lying in $C$ does
%%%not change $\tilde{\Omega}_{\p^n}^1(\log D_Z)$ (hence $Z$ is not Torelli).

\subsection{Structure of the paper} In the next section we set up our
framework for dealing with logarithmic sheaves, based on direct images
of ideal sheaves. In section \ref{section-unstable} we prove our main
theorem, already stated above. This section also contains a result on
the maximal number of unstable hyperplanes of a Steiner sheaf, see
Theorem \ref{max}.
Section \ref{section-decomposition} is
devoted to build a decomposition tool for non-Torelli arrangements.
In this last section we will outline some examples with interesting non-Torelli phenomena.

\subsection{Notations}

We refer to \cite{orlik-terao:arrangements-hyperplanes} for basic
notions on hyperplane arrangements.
As a matter of notation,
we let $\p^n$ be the space of $1$-dimensional quotients of a $\kk$-vector
space $V$ of dimension $n+1$ over a field $\kk$, and we write $\p^n = \p(V)$.
We let $\p_n = \p(V^*)$ be the dual of $\p^n$, namely
the space of hyperplanes of $\p^n$. Given a point $y \in \p_n$, we let
$H_y$ be the hyperplane of $\p^n$ given by $y$.
We use the variables $x_0,\ldots,x_n$ for the polynomial ring of
$\p^n$,
and the variables $z_0,\ldots,z_n$ for the polynomial ring of $\p_n$.
%%We write $R = \kk[x_0,\ldots,x_n]$.

Let $Z$ be a finite length subscheme of the dual space $\p_n$ of $\p^n$.
The scheme $Z$ consists of finitely many points $y_1,\ldots,y_s$,
each $y_i$ supporting a subscheme of length $m_i$. Then $Z$ defines the
divisor $D_Z$ in $\p^n$, namely the set $H_{y_1},\ldots,H_{y_s}$ 
of hyperplanes of $\p^n$, each $H_{y_i}$ counted with multiplicity
$m_i$. Namely:
\[
D_Z = m_1H_{y_1}+ \cdots + m_s H_{y_s}.
\]

We will have to deal with complexes of coherent sheaves on $\p^n$.
A natural framework for them is the derived category $\Db(\p^n)$ of complexes of sheaves with bounded coherent cohomology.
We refer to \cite{gelfand-manin:homological} for a comprehensive
treatment.%%%%, and to \cite{caldararu:skimming} for a concise introduction.
We will denote by $[i]$ the $i$-th shift to the right of a complex in
the derived category.
To shorten notations, we will denote by $(a \to b \to c \shift)$
the exact triangle $(a \to b \to c \to a[1])$.
We will write $\RR F$ for the right derived functor of a functor $F$,
with image in the derived category.

%Throughout the paper we will use the notation $\sF_Z$ introduced in
%the next section as a shortcut for $\tilde{\Omega}_{\p^n}^1(\log D_Z)$
%(see Proposition \ref{iso}).

\section{The Steiner sheaf associated to a hyperplane arrangements}

We consider the incidence variety $\F_n^n$ of pairs $(x,y) \in \p^n
\times \p_n$ where $x$ lies in $H_y$. We let $p$ and $q$ be the
projections from $\F_n^n$ respectively to $\p^n$ and to $\p_n$.
These projections are $\p^{n-1}$-bundles.
We have the natural exact sequence:
\begin{equation}
  \label{incidence}
  0 \to \sO_{\p^n \times \p_n}(-1,-1) \to \sO_{\p^n \times \p_n} \to \sO_{\F_n^n}
  \to 0.
\end{equation}

We consider the complex $\RR p_*(q^*(\cI_Z(1)))$ as an element of the
derived category of complexes of coherent sheaves on $\p^n$.
We set here the definition of a sheaf $\sF_Z$ on $\p^n$ attached to
$Z$, although  
it will turn out (Proposition \ref{iso}) that $\sF_Z$ is in fact
isomorphic to the sheaf 
$\tilde{\Omega}_{\p^n}(\log D_Z)$ introduced by Dolgachev.
However we will stick to the shorter notation $\sF_Z$ all over the paper.

\begin{dfn}
  Given a finite length subscheme $Z$ of $\p_n$ we define
  \[
  \sF_Z = \RRHHom_{\p^n}(\RR p_*(q^*(\cI_Z(1))),\sO_{\p^n}(-1)).
  \]
\end{dfn}
Whenever the vector space $V$ underlying $\p^n$ is unclear, we will
rather write $\sF^V_Z$.

\begin{prop} \label{torsion}
  Let $Z \subset \p_n$ be (schematically) non-degenerate subscheme of
  length $\ell$. 
  Then $\sF_Z$ is a sheaf having the following resolution:
  \[
  0 \to \sO_{\p^n}(-1)^{\ell-(n+1)} \to \sO_{\p^n}^{\ell-1} \to \sF_Z \to 0.
  \]
  Moreover, $\sF_Z$ is torsion-free if, locally around any point $z \in Z$, 
  we have $\cI^2_z \subset \cI_Z$.
\end{prop}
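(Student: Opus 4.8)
The plan is to compute the complex $\RR p_*(q^*(\cI_Z(1)))$ explicitly and then dualize. First I would resolve $\cI_Z(1)$ on $\p_n$. Since $Z$ is non-degenerate of length $\ell$, the evaluation map $H^0(\p_n,\sO(1)) \ts \sO_{\p_n} \to \sO_Z$ — or rather the space of linear forms vanishing on $Z$ — has kernel of dimension $\ell - 1$ is \emph{not} quite the point; instead I would use the Hilbert--Burch/Eagon--Northcott style minimal resolution, or more elementarily note that $h^0(\cI_Z(1)) = n+1 - \min(\ell, n+1)$ and since $Z$ is non-degenerate of length $\ell \ge n+1$ we get $h^0(\cI_Z(1)) = 0$ while $h^0(\sO_{\p_n}(1)) = n+1$. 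The key computation is to pull back by $q$, twist, and push forward by $p$. Because $q$ is a $\p^{n-1}$-bundle and $\cI_Z(1)$ lives on $\p_n$, I would use the projection formula together with the known cohomology of $\sO_{\F_n^n}$ along the fibers of $p$: tensoring the incidence sequence \eqref{incidence} with $q^*\cI_Z(1)$ and applying $\RR p_*$ gives a triangle relating $\RR p_*(q^* \cI_Z(1))$ to $\RR p_*$ of $\sO_{\p^n\times\p_n}(-1,-1)\ts (\text{pr}_2^*\cI_Z(1))$ and $\RR p_*$ of $\sO_{\p^n\times\p_n}\ts(\text{pr}_2^*\cI_Z(1))$. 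These last two are computed by $\RR\Gamma(\p_n, \cI_Z)$ and $\RR\Gamma(\p_n,\cI_Z(1))$ tensored with the appropriate line bundle on $\p^n$, via flat base change along the product projection.

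Next I would assemble: $\RR\Gamma(\p_n,\cI_Z(1))$ is concentrated in degree $0$ with value $H^0(\cI_Z(1)) = 0$ (using non-degeneracy) and possibly $H^1$; more carefully, from $0 \to \cI_Z(1) \to \sO(1) \to \sO_Z \to 0$ one reads $H^0(\cI_Z(1)) = 0$, $H^1(\cI_Z(1)) = \cok(H^0(\sO(1)) \to H^0(\sO_Z)) $, which has dimension $\ell - (n+1)$ by non-degeneracy, and higher cohomology vanishes. Similarly $\RR\Gamma(\p_n,\cI_Z) = 0$ in degree $0$ and $H^1(\cI_Z)$ has dimension $\ell - 1$. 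Feeding this into the triangle produces a two-term complex on $\p^n$ of the shape $\sO_{\p^n}(-1)^{\ell-(n+1)} \to \sO_{\p^n}^{\ell-1}$, placed so that $\RR p_*(q^*\cI_Z(1))$ has a single (or at most the claimed) cohomology sheaf. Applying $\RRHHom_{\p^n}(-,\sO_{\p^n}(-1))$ then turns this into the asserted presentation $0 \to \sO_{\p^n}(-1)^{\ell-(n+1)} \to \sO_{\p^n}^{\ell-1} \to \sF_Z \to 0$, once one checks the dual complex has no higher $\cExt$ contribution; this is where the hypothesis that $\sF_Z$ is a genuine sheaf (not just a complex) enters, equivalently that the map $\sO_{\p^n}(-1)^{\ell-(n+1)} \to \sO_{\p^n}^{\ell-1}$ is injective with locally free or torsion-free cokernel in the appropriate sense.

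For the torsion-freeness statement, I would argue locally. The cokernel $\sF_Z$ of an injection of vector bundles on the smooth variety $\p^n$ fails to be torsion-free exactly where the bundle map $\phi\colon \sO(-1)^{\ell-(n+1)} \to \sO^{\ell-1}$ drops rank; torsion appears in the cokernel precisely when the degeneracy locus of $\phi$ has a component of codimension smaller than the expected codimension $2$ (i.e.\ a divisorial component). One then relates the local rank of $\phi$ near a hyperplane $H_z$ to the local structure of $Z$ at $z$: the condition $\cI_z^2 \subset \cI_Z$ locally at $z$ says the multiple structure of $Z$ at $z$ is ``curvilinear-free'' enough that no extra linear syzygy is forced, which translates into $\phi$ having maximal rank in codimension $1$. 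The main obstacle, and the step requiring the most care, is precisely this translation — making the local computation at a fat point $z \in Z$ of the fiber of $\RR p_*(q^*\cI_Z(1))$ and identifying when the resulting map degenerates in codimension one on $\p^n$; the derived pushforward must be handled with care since $q^*\cI_Z$ need not be flat over $\p^n$ when $Z$ is non-reduced, which is exactly why the hypothesis $\cI_z^2 \subset \cI_Z$ is the natural one to impose.
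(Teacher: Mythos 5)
Your first half follows the paper's route exactly (tensor the incidence sequence with $q^*(\cI_Z(1))$, push forward using $\HH^\bullet(\p_n,\cI_Z)$ and $\HH^\bullet(\p_n,\cI_Z(1))$, then dualize), but it has two problems. The minor one is a bookkeeping slip: $\RR p_*(q^*(\cI_Z(1)))$ is represented by the complex $\sO_{\p^n}(-1)^{\ell-1}\to\sO_{\p^n}^{\ell-(n+1)}$ (the ranks come from $h^1(\cI_Z)=\ell-1$ and $h^1(\cI_Z(1))=\ell-n-1$), not by the complex you wrote down, which is already its dual. The serious one is that you treat ``$\sF_Z$ is a genuine sheaf'' as a hypothesis that ``enters'' at the dualization step, whereas it is precisely part of what the proposition asserts: one must \emph{prove} that the transposed map $\sO_{\p^n}(-1)^{\ell-(n+1)}\to\sO_{\p^n}^{\ell-1}$ is injective, i.e.\ that $\RRHHom_{\p^n}(-,\sO_{\p^n}(-1))$ applied to the two-term complex has no cohomology in negative degree. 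The paper gets this from the observation that $\RR^1 p_*(q^*(\cI_Z(1)))$ is a torsion sheaf (its support is the set of $x$ with $\HH^1(H_x,\cI_{Z\cap H_x}(1))\neq0$), so $\cHom$ of it into a line bundle vanishes; your write-up never supplies this step.

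For torsion-freeness you correctly reduce to whether the presentation matrix drops rank in codimension one (though the ``expected codimension'' of that degeneracy locus is $n+1$, not $2$), but you then explicitly declare the translation from $\cI_z^2\subset\cI_Z$ to this rank condition to be ``the main obstacle'' and leave it undone, so this half of the proposal is not a proof. The missing idea is the one the paper uses: unwinding the dual of the two-term complex identifies the torsion of $\sF_Z$ as $\cExt^1_{\p^n}(\RR^1p_*q^*(\cI_Z(1)),\sO_{\p^n}(-1))$, which is nonzero if and only if $\RR^1p_*q^*(\cI_Z(1))$ has divisorial support, i.e.\ if and only if there is $z\in Z$ such that $\HH^1(H_x,\cI_{Z\cap H_x}(1))\neq0$ for \emph{all} $x\in H_z$; one then checks locally at $z$ that the hypothesis $\cI_z^2\subset\cI_Z$ rules this out. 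Without some version of that computation (or of the local matrix analysis you postpone), the second assertion of the proposition is not established.
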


%%%%%DF ****** I think the torsion-free condition can be made more precise.

\begin{proof}
  Working on the product $\p^n \times \p_n$, we tensor
  \eqref{incidence} with $q^*(\cI_Z(1))$,  
  obtaining thus the exact sequence:
  \begin{equation}
    \label{product}
  0 \to \sO_{\p^n}(-1) \boxtimes \cI_Z \to \sO_{\p^n}
  \boxtimes \cI_Z(1) \to q^*(\cI_Z(1)) \to 0 
  \end{equation}

  Since $Z$ has finite length, we have $\HH^k(\p_n,\cI_Z(t))=0$ for all $k>1$
  and for all $t\in \Z$. Further, we have $\HH^0(\p_n,\cI_Z)=0$ for
  $Z$ is not empty and $\HH^0(\p_n,\cI_Z(1))=0$ since $Z$ is non-degenerate.
  Therefore, taking direct image onto $\p^n$, we get 
%  The complex $\RR p_*(q^*(\cI_Z(1)))$ fits into
  the following distinguished triangle:
  \[
  \RR p_*(q^*(\cI_Z(1))) \to \sO_{\p^n}(-1)^{\ell-1} \xr{M_Z}
  \sO_{\p^n}^{\ell-(n+1)} \to \RR p_*(q^*(\cI_Z(1)))[1]
  \]
  where $M_Z$ is obtained applying $\RR p_*(-)$ to the inclusion
  appearing in \eqref{product}.
  Therefore $\RR p_*(q^*(\cI_Z(1)))$ has cohomology only in degree $0$
  and $1$, and is isomorphic to the
  cone of:
  \[\sO_{\p^n}(-1)^{\ell-1} \xr{M_Z} \sO_{\p^n}^{\ell-(n+1)}.\]
  Taking $\RRHHom_{\p^n}(-,\sO_{\p^n}(-1))$, we get that $\sF_Z$ is
  isomorphic to the cone of:
  \[
  \sO_{\p^n}(-1)^{\ell-(n+1)} \xr{M_Z^t} \sO_{\p^n}^{\ell-1}.
  \]
  Further, the sheaf $\RR^1 p_*(q^*(\cI_Z(1)))$ is supported at the
  points $x$ of $\p^n$ such that $\HH^1(H_x,\cI_{Z \cap H_x}(1))\neq 0$.
    In particular, it is a torsion sheaf.
  Therefore, the map $M_Z^t$ is injective, hence $\sF_Z$ is
  concentrated in degree zero, and we have the exact sequence: 
  \begin{equation}
    \label{M}
  0 \to \sO_{\p^n}(-1)^{\ell-(n+1)} \xr{M_Z^t} \sO_{\p^n}^{\ell-1} \to \sF_Z \to 0.    
  \end{equation}

  It remains to prove that $\sF_Z$ is torsion-free under our assumptions.
  Unwinding the double complex
  $\RRHHom_{\p^n}(\RR p_*(q^*(\cI_Z(1))),\sO_{\p^n}(-1))$, we get
  two short exact sequences:
%%%   \begin{array}{c}
%%%     \ker \left(\cHom_{\p^n}(p_* q^*(\cI_Z(1)),\sO_{\p^n}(-1)) \to
%%%       \cExt^2_{\p^n}(\RR^1p_* q^*(\cI_Z(1)),\sO_{\p^n}(-1)) \right) \\
%%%     \bigoplus \\
%%%     \cExt^1_{\p^n}(\RR^1p_* q^*(\cI_Z(1)),\sO_{\p^n}(-1)).
%%%   \end{array}
  \begin{gather} %%%% JULY
    \label{torsion-1} 0  \to  \cExt^1_{\p^n}(\RR^1p_* q^*(\cI_Z(1)),\sO_{\p^n}(-1))  \to
    \sF_Z \to \sK \to 0, \\
    \label{torsion-2}  \sK \mono  \cHom_{\p^n}(p_* q^*(\cI_Z(1)),\sO_{\p^n}(-1)) \to
    \cExt^2_{\p^n}(\RR^1p_* q^*(\cI_Z(1)),\sO_{\p^n}(-1)) \to 0.
  \end{gather}

  The coherent sheaf $\sK$ is always torsion-free, and it differs from
  $\sF_Z$ if and only if $\RR^1p_* q^*(\cI_Z(1))$ is supported in
  codimension $1$.
%%%   Note that, in this case, the torsion part of
%%%   $\sF_Z$ is pure of codimension one, since the exact sequence
%%%   \eqref{M} shows this sheaf is Cohen-Macaulay of projective dimension one.
  A necessary and sufficient condition for $\RR^1p_*
  q^*(\cI_Z(1))$ to be supported in 
  codimension $1$, is that there is $z \in Z$ such that,
  for all $x \in H_z$, we have $\HH^1(H_x,\cI_{Z \cap H_x}(1))\neq 0$.
%  In other words, cutting with a general hyperplane $H_x$, 
%  we should have $\HH^1(H_x,\cI_{Z \cap H_x}(1))\neq 0$ for the linear forms
%  over $H_x$ span the coordinate ring of $Z \cap H_x$.
  This is equivalent to say that, given any linear form $f$ vanishing
  at $z$, the ideal of $Z$ modulo $f$ contains all the quadrics of $R/f$.

  In order to check the above condition, we can assume that
  the reduced support of $Z$ is a single point, for $H_x$ generically
  avoids all other points.
  Working locally around this point $z \in Z$, our hypothesis is thus that
  all quadrics of vanishing at $z$ are in the ideal of $Z$.
  Therefore, the same thing takes place modulo $f$, and we are done.
\end{proof}

Let us describe briefly the relationship between our sheaf $\sF_Z$ and
the sheaves $\Omega_{\p^n}(\log D_Z)$ and $\tilde{\Omega}_{\p^n}(\log D_Z)$.
First, let us recall a definition of  $\Omega_{\p^n}(\log D_Z)$ (we
refer for instance to \cite{schenck:modifications}). Let $f$ be a
polynomial defining $D_Z$, where $Z$ consists of $\ell$ points of
$\p_n$. We consider the sheafified derivation module  $\mathscr{D}_0(Z)$, defined by the exact sequence:
\begin{equation}
  \label{gradient}
0 \to \mathscr{D}_0(Z) \to \sO_{\p^n}^{n+1} \xr{(\partial_0 f, \ldots, \partial_n f)} 
\sO_{\p^n}(\ell -1).
\end{equation}
Then the sheaf  $\Omega_{\p^n}(\log D_Z)$ is defined as:
\[
\Omega_{\p^n}(\log D_Z) = \cHom_{\p^n}(\mathscr{D}_0(Z),\sO_{\p^n}(-1)).
\]

\begin{prop} \label{iso}
  Assume that $Z$ is reduced and non-degenerate. Then $\sF_Z$ is isomorphic
  to Dolgachev's sheaf $\tilde{\Omega}_{\p^n}(\log D_Z)$.
  Moreover, we have:
  \begin{equation}
    \label{duali}
  \Omega_{\p^n}(\log D_Z) \cong \cHom_{\p^n}(p_*
  q^*(\cI_Z(1)),\sO_{\p^n}(-1)) \cong \sF_Z^{**}.
  \end{equation}
\end{prop}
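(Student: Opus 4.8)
The plan is to identify $\sF_Z$ with $\tilde\Omega_{\p^n}(\log D_Z)$ by comparing the two defining presentations, and then to pin down $\Omega_{\p^n}(\log D_Z)$ as the reflexive hull. First I would recall from Proposition \ref{torsion} that, since $Z$ is reduced and non-degenerate, $\RR p_*(q^*(\cI_Z(1)))$ is represented by the two-term complex $\bigl[\sO_{\p^n}(-1)^{\ell-1}\xr{M_Z}\sO_{\p^n}^{\ell-(n+1)}\bigr]$ in degrees $0,1$, and $\sF_Z = \cok(M_Z^{t})$. On the other hand, dualizing the incidence sequence \eqref{incidence} tensored with $q^*(\cI_Z(1))$ and taking direct images, one computes $p_*q^*(\cI_Z(1))$ directly from \eqref{product}: the long exact sequence of $p_*$ gives
\[
0\to \sO_{\p^n}(-1)^{?}\to \sO_{\p^n}^{\ell}\to p_*q^*(\cI_Z(1))\to \RR^1p_*(\sO_{\p^n}(-1)\boxtimes\cI_Z)\to\cdots
\]
so I would first observe $p_*q^*(\cI_Z(1))$ agrees, up to the cokernel of $M_Z$, with the kernel-type sheaf $\mathscr{D}_0(Z)$ from \eqref{gradient}; concretely, the map $\sO_{\p^n}^{n+1}\to\sO_{\p^n}(\ell-1)$ given by the partials of $f$ is, after a linear change, the same as the map induced on global sections of $\cI_Z(1)$ by the Koszul-type differential on the product. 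This is the computational heart of the argument and the step I expect to take most care: matching the matrix $M_Z$ (coming from the inclusion in \eqref{product}) with the Jacobian presentation of $\mathscr{D}_0(Z)$, using that the $\ell$ linear forms $f_i$ cutting out $D_Z$ are exactly a basis of $\HH^0(\p_n,\sO_{\p_n}(1))$ pulled back to the forms vanishing at the reduced points of $Z$.

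Once the presentation of $p_*q^*(\cI_Z(1))$ is matched with $\mathscr{D}_0(Z)$ (possibly up to the torsion contributed by $\RR^1$), the second isomorphism in \eqref{duali} is immediate: applying $\cHom_{\p^n}(-,\sO_{\p^n}(-1))$ to the presentation of $p_*q^*(\cI_Z(1))$ and to \eqref{gradient} yields the same cokernel, which by definition is $\Omega_{\p^n}(\log D_Z)$; and since $\sF_Z$ was defined via $\RRHHom$ of the \emph{whole} complex $\RR p_*(q^*(\cI_Z(1)))$, the spectral sequence \eqref{torsion-1}--\eqref{torsion-2} shows $\sF_Z^{**}$ kills the $\RR^1p_*$-torsion and recovers exactly $\cHom_{\p^n}(p_*q^*(\cI_Z(1)),\sO_{\p^n}(-1))$, giving $\Omega_{\p^n}(\log D_Z)\cong\sF_Z^{**}$.

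For the first claim, that $\sF_Z\cong\tilde\Omega_{\p^n}(\log D_Z)$, I would invoke the characterization of $\tilde\Omega_{\p^n}(\log D_Z)$ by the residue exact sequence
\[
0\to \Omega_{\p^n}\to \tilde\Omega_{\p^n}(\log D_Z)\to \bigoplus_{i=1}^{\ell}\sO_{H_i}\to 0
\]
recalled in the Introduction. The strategy is to produce the analogous sequence for $\sF_Z$: the inclusion $\sO_{\p^n}(-1)^{\ell-(n+1)}\to\sO_{\p^n}^{\ell-1}$ of \eqref{M} restricts, on a suitable sub-presentation, to the Euler-type presentation of $\Omega_{\p^n}$ (namely $0\to\Omega_{\p^n}\to\sO_{\p^n}(-1)^{n+1}\to\sO_{\p^n}\to 0$), and the quotient presentation computes $\bigoplus_i\sO_{H_i}$ because the residue along $H_i$ corresponds to evaluation at the linear form $f_i$. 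Chasing the resulting diagram and using that $Z$ is reduced (so each structure sheaf $\sO_{H_i}$ appears with multiplicity one), one gets a short exact sequence with the same outer terms as the residue sequence, and the induced map on middle terms is an isomorphism since it is the identity on both sub and quotient. The main obstacle throughout is bookkeeping: making the several presentations (the Koszul complex on $\p^n\times\p_n$, the Jacobian sequence \eqref{gradient}, the Euler sequence, and the residue sequence) literally coincide as maps of sheaves rather than merely up to abstract isomorphism, and carefully tracking the torsion sheaf $\RR^1p_*q^*(\cI_Z(1))$ which is what distinguishes $\sF_Z$ from its double dual when $Z$ is not in "general position".
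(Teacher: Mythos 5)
There is a genuine gap at the decisive step. Your argument produces a short exact sequence $0\to\Omega_{\p^n}\to\sF_Z\to\bigoplus_i\sO_{H_i}\to 0$ and then asserts that, since the residue sequence has the same sub and quotient, ``the induced map on middle terms is an isomorphism since it is the identity on both sub and quotient.'' But there is no induced map on middle terms unless you first know the two extension \emph{classes} in $\Ext^1_{\p^n}(\bigoplus_i\sO_{H_i},\Omega_{\p^n})$ agree; two extensions with identical outer terms need not have isomorphic middle terms. This is not a pedantic point here: the paper's Claim \ref{theclaim} computes $\Ext^1_{\p^n}(p_*q^*(\sO_Z),\Omega_{\p^n})\cong\Hom_{\p_n}(\sO_{\p_n},\sO_Z)^*$, an $\ell$-dimensional space, and the proof of Theorem \ref{max} shows explicitly that classes corresponding to non-surjective maps $\sO_{\p_n}\to\sO_Z$ yield extensions containing some $\sO_{H_{z_j}}$ as a direct summand --- hence genuinely non-isomorphic middle terms. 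The missing ingredient is precisely the identification of both extension classes with the canonical surjection $\sO_{\p_n}\to\sO_Z$ under this isomorphism (the paper gets this from naturality of both constructions). Without it your proof of $\sF_Z\cong\tilde\Omega_{\p^n}(\log D_Z)$ does not close.

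The first half of your plan also needs repair. You propose to match the Jacobian presentation $\sO_{\p^n}^{n+1}\xr{(\partial_0 f,\ldots,\partial_n f)}\sO_{\p^n}(\ell-1)$ of $\mathscr{D}_0(Z)$ with the presentation of $p_*q^*(\cI_Z(1))$ coming from \eqref{product} ``after a linear change,'' but these presentations have incompatible shapes (a single map into $\sO_{\p^n}(\ell-1)$ versus a map $\sO_{\p^n}(-1)^{\ell-1}\to\sO_{\p^n}^{\ell-n-1}$ whose kernel is the sheaf in question), so there is no direct matrix identification to perform. The workable route --- the one the paper takes --- is to exhibit both $\mathscr{D}_0(Z)$ (via \eqref{vai}) and $p_*q^*(\cI_Z(1))$ (via the cohomology of \eqref{OZ}) as kernels of the \emph{same} functorial map $\cT_{\p^n}(-1)\to p_*q^*(\sO_Z)$, which again rests on an $\Ext$/$\Hom$ computation showing such a functorial map is unique. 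Your treatment of the double-dual statement $\Omega_{\p^n}(\log D_Z)\cong\sF_Z^{**}$ via \eqref{torsion-1}--\eqref{torsion-2} is essentially correct once the identification $p_*q^*(\cI_Z(1))\cong\mathscr{D}_0(Z)$ is in place, but that identification, like the extension-class argument, cannot be done by the bookkeeping you describe.
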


\begin{proof}
  Let us first prove the claim regarding $\tilde{\Omega}_{\p^n}(\log D_Z)$.
We apply the functor $\RR p_* q^*$ to the exact sequence:
  \[
  0 \to \cI_Z(1) \to \sO_{\p_n}(1) \to \sO_Z \to 0.
  \]
  Using \eqref{product}, we obtain the distinguished triangle:
  \begin{equation}
    \label{OZ}
  \RR p_*(q^*(\cI_Z(1))) \to \cT_{\p^n}(-1) \to 
   \RR p_*(q^*(\sO_Z)) \shift
  \end{equation}
  Now if $Z$ is reduced we have $Z = \{z_1,\ldots,z_\ell\}$. Note that:
  \[
  q^*(\sO_Z) \cong \sO_{q^{-1}(Z)} \cong \sO_{\cup_{j = 1,\ldots,\ell} H_{z_j}}.
  \]
  This sheaf 
 lies above the divisor $D_Z$, and
  $p: q^{-1}(Z) \to D_Z$
  is a resolution of singularities of $D_Z$.
  By Grothendieck duality, we have that $\RRHHom_{\p^n}(\RR
  p_*(q^*(\sO_Z)),\sO_{\p^n}(-1))[1]$ is isomorphic to:
  \begin{align*}
  & \RR p_*(\RRHHom_{\F^n_n}(q^*(\sO_Z),\sO_{\F^n_n}(0,-n)))[n].
  \end{align*}
  For each $z_j$ in $Z$ we have:
  \begin{align*}
    \RRHHom_{\F^n_n}(q^*(\sO_{z_j}),\sO_{\F^n_n}(0,-n))[n]
    & \cong  \cExt_{\F^n_n}^{n}(\sO_{H_{z_j}} ,\sO_{\F^n_n}(0,-n)) ) \cong  \\
  & \cong  \sO_{H_{z_j}} \ts \omega^*_{\F^n_n} \ts \sO_{\F^n_n}(0,-n)
  \cong \sO_{H_{z_j}}.
  \end{align*}

  Therefore, taking $\RRHHom_{\p^n}(-,\sO_{\p^n}(-1))$ of the triangle
  \eqref{OZ}, we 
  have the exact sequence: 
  \begin{equation}
    \label{extension}
    0 \to \Omega_{\p^n} \to \sF_Z \to   p_*(\sO_{q^{-1}(Z)}) \to 0.    
  \end{equation}

  We will be done if we can prove that this is the residue exact
  sequence defining 
  $\tilde{\Omega}_{\p^n}(\log D_Z)$ according to
  \cite{dolgachev:logarithmic}.
  This will be accomplished by proving that there is in fact a unique
  functorial extension of $\Omega_{\p^n}(1)$ by
  $p_*(q^*(\sO_Z))$, and observing that both the residue exact sequence and
  \eqref{extension} are clearly functorial.
  \begin{claim} \label{theclaim}
    We have a natural isomorphism:
    \[
    \Ext^1_{\p^n}(p_*(q^*(\sO_Z)),\Omega_{\p^n}) \cong 
    \Hom_{\p_n}(\sO_{\p_n},\sO_Z)^*.
    \]
  \end{claim}
  Since $\sO_Z$ is naturally a quotient of $\sO_{\p_n}$, this claim
  will complete our argument.
  To prove the claim, we write the isomorphisms:
  \begin{align*}
    \Ext^1_{\p^n}(p_*(q^*(\sO_Z)),\Omega_{\p^n}) & \cong 
    \Ext^{n-1}_{\p^n}(\Omega_{\p^n}(n+1),p_*(q^*(\sO_Z)))^* \cong \\
    & \cong \Ext^{n-1}_{\F^n_n}(p^*(\Omega_{\p^n}(n+1)),q^*(\sO_Z))^*,
    \intertext{where the first one is Serre duality and the second
      one is adjunction. 
  Now we use the left adjoint functor to $q^*$, namely the functor
  $\RR q_*(- \ts \sO_{\F^n_n}(-n,1))[n-1]$. Thus the latter group above is}
 %   & \cong \Ext^{n-1}_{\p^n}(q_*(p^*(\Omega_{\p^n}(n+1)) \ts
 %   \sO(-n,1)[n-1]),\sO_Z)^*    \cong \\
    & \cong \Hom_{\p_n}(\RR q_*(p^*(\Omega_{\p^n}(1)))\ts
    \sO_{\p^n}(1),\sO_Z)^*    \cong \\ 
    & \cong \Hom_{\p_n}(\sO_{\p_n},\sO_Z)^*.
  \end{align*}

Let us now turn to $\Omega_{\p^n}(\log D_Z)$. Let again $f=\prod_{i=1}^\ell f_i$ be an
equation defining $D_Z$. Recall that the image of
the rightmost map in \eqref{gradient} (the gradient map) is the
Jacobian ideal $\sJ$ of $D_Z$. Denote by $\sJ_{D_Z}$ the image of
$\sJ$ in $\sO_{D_Z}$ (so $\sJ_{D_Z} = \sJ \cdot \sO_{D_Z}$).
%% Combining the natural exact sequence relating $\sJ_{D_Z}$ and $\mathscr{D}_0(Z)$
%% (see e.g. \cite[Section 2]{dolgachev:logarithmic}) and the Euler 
%% sequence we get the following exact commutative diagram:
%% \[
%% \xymatrix@-2ex{
%%   &&\sO_{\p^n}(-1) \ar[d] \ar@{=}[r]  & \sO_{\p^n}(-1) \ar^{f}[d] \\
%% 0  \ar[r] &  \mathscr{D}_0(Z) \ar[r] \ar@{=}[d] &  \sO_{\p^n}^{n+1} \ar[r] \ar[d] &  \sJ(\ell-1) \ar[r] \ar[d] &  0 \\
%% 0  \ar[r] &  \mathscr{D}_0(Z) \ar[r] &  \cT_{\p^n}(-1) \ar[r] &  \sJ_{D_Z}(\ell -1) \ar[r] &  0.
%% }
%% \]
Recall the natural exact sequence relating $\sJ_{D_Z}$ and $\mathscr{D}_0(Z)$
(see e.g. \cite[Section 2]{dolgachev:logarithmic}):
\begin{equation}
  \label{vai}
 \xymatrix@-2ex{
%%   &&\sO_{\p^n}(-1) \ar[d] \ar@{=}[r]  & \sO_{\p^n}(-1) \ar^{f}[d] \\
%% 0  \ar[r] &  \mathscr{D}_0(Z) \ar[r] \ar@{=}[d] &  \sO_{\p^n}^{n+1} \ar[r] \ar[d] &  \sJ(\ell-1) \ar[r] \ar[d] &  0 \\
 0  \ar[r] &  \mathscr{D}_0(Z) \ar[r] &  \cT_{\p^n}(-1) \ar[r] &  \sJ_{D_Z}(\ell -1) \ar[r] &  0.
 }
\end{equation}
Note also that we have:
\begin{align*}
\Ext^1_{\p^n}(p_*q^*(\sO_Z),\Omega_{\p^n}) & \cong
\Hom_{\p^n}(\cT_{\p^n},\RRHHom_{\p^n}(p_*q^*(\sO_Z),\sO_{\p^n})[1]) \cong \\
& \cong \Hom_{\p^n}(\cT_{\p^n},p_*q^*(\sO_Z)(1)),
\end{align*}
so the last homomorphism group contains a unique functorial element.
Further, from \cite[Proposition 2.4]{dolgachev:logarithmic} we get an
inclusion of $\sJ_{D_Z}(\ell)$ into $p_*(\omega_{q^{-1} Z} \ts \omega^*_{\p^n}) \cong
p_*(q^* \sO_Z)(1)$.

Therefore, both $\mathscr{D}_0(Z)$ (by \eqref{vai}) and
$p_*(q^*(\cI_Z(1)))$ (by the cohomology sequence of \eqref{OZ}) are
the kernel of the unique functorial map $\cT_{\p^n}(-1)
\to p_*q^*(\sO_Z)$.
This gives an isomorphism:
\begin{equation}
  \label{last}
  p_*(q^*(\cI_Z(1))) \cong \mathscr{D}_0(Z).  
\end{equation}
Note also that we have the exact sequence:
\[
0 \to \sF_Z \to \Omega_{\p^n}(\log D_Z) \to
\cExt^2_{\p^n}(\RR^1p_* q^*(\cI_Z(1)),\sO_{\p^n}(-1))  \to 0.
\]
The desired isomorphisms \eqref{duali} easily follow from the above
sequence and \eqref{last}.
\end{proof}

\begin{rmk}
  \label{omegatilde-omega}
  The support of the cokernel sheaf $\cExt^2_{\p^n}(\RR^1p_*
  q^*(\cI_Z(1)),\sO_{\p^n}(-1))$ sits in codimension $k>1$ if and only
  $Z$ contains a subscheme of length $(n+1)$, contained in a
  linear subspace $\p_{k-1}$.
  Further, this shows again that $\sF_Z$ and $\Omega_{\p^n}(\log D_Z)$ agree if
  $D_Z$ is normal crossing in codimension $2$, see \cite[Corollary 2.8]{dolgachev:logarithmic}.
\end{rmk}

\begin{eg}
  Consider the ideal $(z_0z_2^2,(z_1+z_1)z_1z_2,z_0z_1z_2,z_0z_1^2)$. This
  defines a subscheme $Z \subset \p_2$, which is the union of the first
  infinitesimal 
  neighbourhood of $(1:0:0)$ and the three collinear points $(0:1:0)$,
  $(0:0:1)$, $(0:1:-1)$. Then we have:
  \[
  M_Z=\left (
 \begin{array}{ccccc}
   -x_0 & 0 & x_1 & 0 & 0 \\
   x_0 & 0 & 0 & x_1-x_2 & -x_2\\
   0 & x_0 & 0  & 0 & x_2
 \end{array} \right ).
 \]
 In this case $\sF_Z$ is still torsion-free and we have:
 \[
 0 \to \sF_Z \to \Omega_{\p^n}(\log D_Z) \to \sO_{x_1,\ldots,x_4}
 \to 0, \qquad \Omega_{\p^n}(\log D_Z) \cong \sO_{\p^2}(2) \oplus \sO_{\p^2}(1),
 \]
 where $x_1,\ldots,x_4$ are $(1:0:0)$, $(0:1:0)$, $(0:0:1)$,
 $(0:1:1)$,  the $4$ points
 corresponding to the $4$ lines in $\p_2$ which are $3$-secant to $Z$.
 The arrangement given by $Z$ is thus free (i.e. $\Omega_{\p^n}(\log
 D_Z)$ splits as a direct sum of line bundles).
\end{eg}

\begin{eg}
  Consider the scheme $Z$ defined as the union of the second
  infinitesimal neighbourhood of $(0:1:0)$ and the two points $(1:0:0)$,
  $(0:0:1)$. Namely, the ideal of $Z$ is
  $\text{${\text{ideal}}$} ({x}_{0} {x}_{2}^{2},{x}_{0}^{2} {x}_{2},{x}_{1} {x}_{2}^{3},{x}_{0}^{3} {x}_{1})$.
  In this case, we obtain the matrix:
  \[
  M_Z=\left (
 \begin{array}{cccccccc}
   0 & 0 & -x_1 & 0 & 0 & 0 & x_2 \\
   x_0 & 0 & 0 & x_1 & 0 & 0 & 0\\
   0 & 0 & x_2 & 0 & x_1 & 0 & 0 \\
   -x_1 & x_0 & 0  & 0 & 0 & 0 & 0 \\
   -x_2 & 0 & x_0 & 0  & 0 & x_1 & 0
 \end{array} \right ).
 \]

 Here we get the line $L$ defined as $\{x_1=0\}$ as support of the torsion part of
 $\sF_Z$. %%%% JULY
 We have $\Omega_{\p^n}(\log D_Z) \cong \sO_{\p^2}(2) \oplus
 \sO_{\p^2}(2)$, i.e. $Z$ is a free arrangement.
 The exact sequences \eqref{torsion-1} and \eqref{torsion-2} become:
 \[
 0 \to \sO_L(-2) \to \sF_Z \to \sO_{\p^2}(2)^2 \to \sO_{Z_1
   \cup Z_2} \to 0,
 \]
 where $Z_1,Z_2$ are two length-$2$ subschemes, supported at
 the points $(1:0:0)$ and $(0:0:1)$, accounting for the two $4$-secant lines to $Z$
 in $\p_2$, namely $\{z_0=0\}$ and $\{z_2=0 \}$.
\end{eg}

%%% \begin{eg}
%%% This last claim implies that we can recover the matrix of resolution of $\sF_Z $ by knowing only the scheme $Z$. Indeed 
%%% let us assume that $Z=\lbrace z_1,\cdots ,z_6\rbrace$ is  finite, reduced and not aligned subscheme of $\p_2$. 
%%% We assume first that they are in linear general position in order that
%%% we have exactly three relations, let say 
%%% $z_6=z_1+z_2+z_3=2z_2+3z_3+4z_4=5z_3+7z_4+11z_5$. Then we obtain the following resolution : 
%%% \[
%%%   0 \to \sO_{\p^2}(-1)^{3} \xr{M_Z^t} \sO_{\p^2}^{5} \to \sF_Z \to 0.
%%%   \]
%%% with  $$M_Z^t=\left (
%%%  \begin{array}{ccc}
%%%  H_{z_1}      &  0 & 0\\
%%%  H_{z_2}     & 2H_{z_2} &0\\
%%%  H_{z_3}    &  3H_{z_3}  &5H_{z_3} \\
%%%  0    &  4H_{z_4}  &7H_{z_4}\\
%%%  0    &  0  &11H_{z_5}
%%%  \end{array} \right )
%%%  $$
%%% Now assume that $Z$ is the union of $z_2$, $z_3$, $z_6$ and the first infinitesimal neighborhood of $z_1$ such that 
%%% $z_6=z_1+z_2+z_3$. We obtain again the same resolution with the following matrix :
%%% $$M_Z^t=\left (
%%%  \begin{array}{ccc}
%%%  H_{z_1}      &  0 & 0\\
%%%  H_{z_2}     & H_{z_2} &0\\
%%%  H_{z_3}    &  H_{z_3}  &0\\
%%%  0    &  H_{z_1}  &0\\
%%%  0    &  0  &H_{z_1}
%%%  \end{array} \right )
%%%  $$
%%% \end{eg}

\section{Unstable hyperplanes of logarithmic sheaves}
\label{section-unstable}

The goal of this section is to prove our main result,
stated in the introduction.
We will first need some definitions.

\begin{dfn}
  Let $\sE$ be a Steiner sheaf on $\p^n$, namely a sheaf $\sE$ fitting
  into an exact sequence of the form:
  \[
  0 \to \sO_{\p^n}(-1)^a \to \sO_{\p^n}^b \to \sE \to 0,
  \]
  for some integers $a,b$.
  Then a hyperplane $H$ is {\it unstable} for $\sE$ if:
  \[
  \HH^{n-1}(H,\sE_{|H}(-n)) \neq 0.
  \]
  A point $y$ of $\p_n$ is unstable for $\sE$ if the hyperplane $H_y$
  is unstable for $\sE$.
    
  We can give a scheme structure to the set $\W(\sE)$ of unstable
  hyperplanes of $\sE$,
  considering them as the scheme-theoretic support of the sheaf $\RR^{n-1}
  q_*(p^*(\sE(-n)))$.
\end{dfn}

\begin{dfn}
  A finite length subscheme $Z$ of $\p_n$ is said to be {\it Torelli} if $Z$
  gives rise to a Torelli arrangement, namely if the {\it set} of unstable
  hyperplanes of $\sF_Z$ is the support of $Z$, i.e. if we have a
  set-theoretic equality:
  \[
  \W(\sF_Z)=Z.
  \]
\end{dfn}

\begin{lem}
  Let $Z$ be a finite length subscheme of $\p_n$.
  Then we have a scheme-theoretic inclusion:
  \[
  Z \subset \W(\sF_Z).
  \]
\end{lem}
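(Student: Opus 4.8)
The plan is to show that every point $z \in Z$ is unstable for $\sF_Z$, and in fact that this holds scheme-theoretically, by unwinding the definition $\W(\sF_Z) = \supp(\RR^{n-1} q_*(p^*(\sF_Z(-n))))$ in terms of the resolution of $\sF_Z$ obtained in Proposition~\ref{torsion}. Concretely, fix the exact sequence \eqref{M}, twist by $\sO_{\p^n}(-n)$, pull back along $p$, and push forward along $q$. Since $p$ is a $\p^{n-1}$-bundle, $p^*$ is exact and $p^*\sO_{\p^n}(-1) = \sO_{\F^n_n}(-1,0)$, so we must compute $\RR q_*$ of the terms $\sO_{\F^n_n}(-n,0)^{\ell-(n+1)}$ and $\sO_{\F^n_n}(1-n,0)^{\ell-1}$. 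On the fibres of $q$ these restrict to $\sO_{\p^{n-1}}(-n)$ and $\sO_{\p^{n-1}}(1-n)$ respectively, so by relative Serre duality on the $\p^{n-1}$-bundle $q$ only $\RR^{n-1} q_*$ survives, and it equals $\sO_{\p_n}(\star) \ts (\text{dual of the relevant symmetric power})$; the upshot is a presentation of $\RR^{n-1} q_*(p^*(\sF_Z(-n)))$ as the cokernel of a map between the two locally free sheaves obtained this way, with the map induced by ${}^{\mathrm t}M_Z$.

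The key computation is then to identify this cokernel. The cleanest route is to go back to the distinguished triangle \eqref{OZ},
\[
\RR p_*(q^*(\cI_Z(1))) \to \cT_{\p^n}(-1) \to \RR p_*(q^*(\sO_Z)) \shift,
\]
dualize to get \eqref{extension}, $0 \to \Omega_{\p^n} \to \sF_Z \to p_*(\sO_{q^{-1}(Z)}) \to 0$ (in the reduced case; in general one uses the analogous sequence from Proposition~\ref{iso}), twist by $-n$, apply $p^*$ and $\RR q_*$, and use that $\RR^{k} q_* p^* \Omega_{\p^n}(-n) = 0$ in the top degree by a Bott-type vanishing, so that $\RR^{n-1} q_*(p^*(\sF_Z(-n))) \cong \RR^{n-1}q_*(p^* p_*(\sO_{q^{-1}Z})(-n))$. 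Because $q^{-1}(Z) = \bigsqcup_j H_{z_j}$ (a disjoint union of hyperplanes, in the reduced case), restricting to the fibre of $q$ over a point $y$ picks out exactly the locus where $H_y$ meets these hyperplanes, and a direct fibrewise cohomology computation shows the stalk of the pushforward is nonzero precisely along $Z$ — indeed it surjects onto (a twist of) $\sO_Z$. The scheme structure then matches because the whole construction is functorial and compatible with the surjection $\sO_{\p_n} \epi \sO_Z$, giving a surjection from $\RR^{n-1}q_*(p^*(\sF_Z(-n)))$ onto $\sO_Z$ up to twist, hence a scheme-theoretic inclusion $Z \subset \W(\sF_Z)$.

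I would organize the write-up as: (1) reduce to computing $\RR^{n-1}q_*(p^*(\sF_Z(-n)))$; (2) kill the $\Omega_{\p^n}$ contribution via relative Bott vanishing on the incidence variety; (3) compute $\RR^{n-1}q_*(p^* p_*(\sO_{q^{-1}Z})(-n))$ and exhibit the surjection onto $\sO_Z$ (twisted), using base change along $q$ and the explicit geometry $q^{-1}(Z) = \bigsqcup H_{z_j}$; (4) conclude $Z \subseteq \W(\sF_Z)$ as schemes. The main obstacle is step~(3): one must handle the non-reduced case (where $q^{-1}(Z)$ is a thickening of a union of hyperplanes and $p_*$ of its structure sheaf is more delicate) and verify that the natural map really is surjective onto $\sO_Z$ rather than merely supported on $Z$ — this is what upgrades a set-theoretic containment to a scheme-theoretic one, and it is where the functoriality established in the proof of Proposition~\ref{iso} does the real work.
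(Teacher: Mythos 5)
Your route is genuinely different from the paper's, and it has two real problems. The paper does not pass through the residue sequence at all: it applies Grothendieck duality to the definition of $\sF_Z$ to write $\sF_Z(-n)$ as $\RR p_*\bigl(\RRHHom_{\F^n_n}(q^*(\cI_Z(1)),\sO_{\F^n_n}(-n,-n))\bigr)[n-1]$, pushes forward along $q$, applies duality a second time, and reads off a surjection
\[
\RR^{n-1} q_*(p^*(\sF_Z(-n))) \epi \cExt^{n-1}_{\p_n}(\cI_Z,\sO_{\p_n}(-n-1)) \cong \sO_Z,
\]
valid verbatim for an arbitrary finite-length $Z$. This is both shorter and more general than what you propose.

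The first gap in your argument is the ``Bott-type vanishing'' in step (2): $\RR^{n-1}q_*(p^*(\Omega_{\p^n}(-n)))$ is \emph{not} zero. Restricting to a fibre $H_y\cong\p^{n-1}$ of $q$ and using $0\to\sO_{H_y}(-n-1)\to\Omega_{\p^n}(-n)|_{H_y}\to\Omega_{H_y}(-n)\to 0$, both outer terms have nonzero $\HH^{n-1}$ (e.g.\ $\HH^{n-1}(\p^{n-1},\sO(-n-1))\cong\HH^0(\p^{n-1},\sO(1))^*$), so this $\RR^{n-1}q_*$ is a nonzero locally free sheaf on all of $\p_n$. Consequently your claimed isomorphism $\RR^{n-1}q_*(p^*(\sF_Z(-n)))\cong\RR^{n-1}q_*(p^*p_*(\sO_{q^{-1}Z})(-n))$ fails; what survives is only a \emph{surjection} onto the right-hand side, because $\RR^n q_*$ vanishes for fibre-dimension reasons. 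A surjection is in fact all you need, but the argument must be restated accordingly. The second, more serious gap is that the identification $\sF_Z/\Omega_{\p^n}\cong p_*(\sO_{q^{-1}Z})$ on which your whole computation rests is established in Proposition \ref{iso} only for \emph{reduced} (and non-degenerate) $Z$, whereas the lemma is asserted for every finite-length subscheme; you acknowledge the non-reduced case as ``delicate'' but do not close it, and there is no ``analogous sequence'' in the paper to appeal to. Finally, even in the reduced case, step (3) requires an honest argument that the composite really surjects onto $\sO_Z$ (e.g.\ via the counit $p^*p_*(\sO_{q^{-1}Z})\to\sO_{q^{-1}Z}$ and $\HH^{n-1}(H_z,\sO_{H_z}(-n))\cong\kk$); as written this is only sketched. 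The double-duality argument of the paper avoids all three issues at once.
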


\begin{proof}
  By Grothendieck duality, we have:
  \[
  \sF_Z(-n) \cong \RR p_* (\RRHHom_{\F^n_n}(q^*(\cI_Z(1)),\sO_{\F^n_n}(-n,-n)))[n-1],
  \]
  from which we get an epimorphism:
  \[
  \RR q_*(p^*(\sF_Z(-n)))[n-1] \to \RR q_* (\RRHHom_{\F^n_n}(q^*(\cI_Z(1)),\sO_{\F^n_n}(-n,-n)))[n-1].
  \]
  Applying again Grothendieck duality, we get an isomorphism of the
  right hand side above and:
  \[
  \RRHHom_{\p_n}(\RR q_* q^*(\cI_Z(1)),\sO_{\p_n}(-n)),
  \]
  which projects onto:
  \[
  \RRHHom_{\p_n}(\cI_Z(1),\sO_{\p_n}(-n)).
  \]
  Summing up, we have an epimorphism:
  \[
  \RR q_*(p^*(\sF_Z(-n)))[n-1] \epi  \RRHHom_{\p_n}(\cI_Z(1),\sO_{\p_n}(-n)),
  \]
  and taking cohomology in degree $n-1$ we get:
  \[
  \RR^{n-1} q_*(p^*(\sF_Z(-n))) \epi \cExt^{n-1}_{\p_n}(\cI_Z,\sO_{\p_n}(-n-1)) \cong \sO_Z,
  \]
  which proves our claim.
\end{proof}

\begin{rmk}
  It was already proved in \cite{dolgachev:logarithmic} that any $z \in Z$ is
  unstable for $\sF_Z$, hence $Z$ is not Torelli if and only if 
  the {\it set} of unstable hyperplanes of $\sF_Z$ strictly contains $Z$.
  
  One could say that $Z$ is {\it scheme-theoretically Torelli} if the
  subscheme of unstable hyperplanes is $Z$ itself.
  A criterion analogous to Theorem \ref{main} for $Z$ to be
  scheme-theoretically Torelli is lacking at 
  the time being.
\end{rmk}

\begin{rmk}
We point out that $\W(\sF_Z)=\W(\Omega_{\p^n}(\log D_Z))$ if and only if
$Z$ does not possess a subscheme of length $(n+1)$ contained in a line,
as explained in Remark \ref{omegatilde-omega}.
This remark makes more precise Proposition 3.2 of \cite{dolgachev:logarithmic}.
\end{rmk}

\subsection{Kronecker-Weierstrass varieties and unstable hyperplanes}

In order to prove Theorem \ref{main}, we introduce
some geometric objects that we call Kronecker-Weierstrass varieties.
The name is inspired on the tool that classifies them.
Indeed, the isomorphism classes of these varieties are given by the
standard Kronecker-Weierstrass 
forms of a matrix of homogeneous linear forms in two variables.
We recall the definition given in the introduction.

\begin{dfn} \label{KW}
Let $(d,n_1,\ldots,n_s)$ be a string of $s+1$ integers such
that $n = d + n_1 \cdots + n_s$, and $1 \le d \le n$.
Then $Y \subset \p_n$ is a
{\it Kronecker-Weierstrass (KW) variety of type $(d;s)$} if
$Y = C \cup L_1 \cup \cdots \cup
L_s \subset \p_n$, where the $L_i$'s are linear
subspaces of dimension $1 \le n_i \le n-1$ and $C$ is a 
smooth rational curve of degree $d$ (called the
{\it curve part} of $Y$) spanning
a linear space $L$ of dimension $d$ such that:
\begin{enumerate}[i)]
\item 
  for all $i$,  $L \cap L_i$ is a single point which lies in $C$;
\item 
  the spaces $L_i$'s are mutually disjoint.
\end{enumerate}

If $d=0$, a KW variety of type $(0;s)$ is defined as 
$Y = L_1 \cup \cdots \cup L_s \subset \p_n$, 
where the $L_i$'s are linear
subspaces of dimension $1 \le n_i \le n-1$
and all the linear spaces $L_i$ meet only at a point $y$, 
which is called {\it the distinguished point of $Y$}.
\end{dfn}

\begin{figure}[h!]
  \centering
  \includegraphics[height=1.5in]{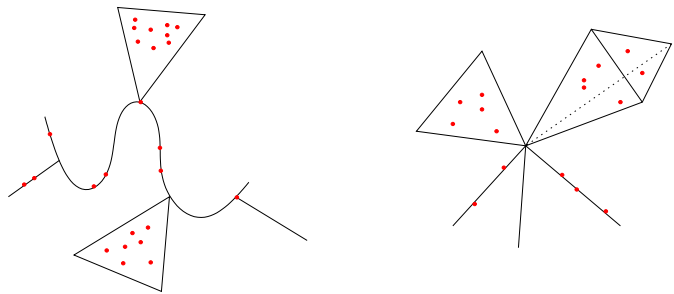}
  \caption{Points contained in a Kronecker Weierstrass variety.}
\end{figure}

  \begin{eg} We outline some examples of KW variety.
  \begin{enumerate}[1)]
  \item A rational normal curve is a KW variety of type $(n;0)$.
  \item A union of two lines in $\p^2$ is a KW  variety in three
     ways, two of them of type $(1;1)$, and one of type $(0;2)$ (the
     intersection point is the distinguished point).
%%%   \item A subvariety of $\p_n$ can be a KW variety in many different ways,
%%% with or without distinguished point. For instance, any line of the cone $Y$ with
%%% vertex $y$ and base over $n$
%%% points in general linear position can be the curve part of $Y$, and
%%% $y$ can be seen as the distinguished point of $Y$.
%%% take the matrix
%%%     \[
%%%     M =
%%%     \begin{pmatrix}
%%%       f(z_0,z_1) & z_2 & z_3 \\
%%%       z_2 & 0 & 0
%%%     \end{pmatrix},
%%%     \]
%%%     where $f\neq 0$ is a linear form.
%%%     Then $M$ defines a (non-diagonal) KW variety in $\p_3$ whose distinguished point is
%%%     given by $z_2=z_3=f(z_0,z_1)=0$, and whose ideal has  components $I_1 =
%%%     (z_2)$ and $I_2= (z_2^2,z_3)$.
  \end{enumerate}
  \end{eg}

Having this setup, we can move towards the proof of our Theorem \ref{main}.
We need a series of lemmas and the following construction.

%%% formulate the main result of this section.
%%% The theorem stated in the introduction will be a consequence (and a
%%% slight refinement) of this result.
%%% 
%%% \begin{main} \label{generale}
%%%   Let $Z$ be a finite-length, non-degenerate 
%%%   subscheme of $\p_n$.
%%%   If $Z$ is not contained in any KW variety in $\p_n$, then $Z$ is Torelli.
%%%   Conversely, if $Z$ is contained in a KW variety whose
%%%   distinguished points are not all in $Z$, then $Z$ is not Torelli. 
%%% \end{main}
%%% 
%%% 
%%%   In order to prove the above result, 
  Given a point $y$ of $\p_n$, we consider the Koszul complex resolving the
  ideal sheaf $\cI_y$, namely a long exact sequence:
  \[
  0 \to \sO_{\p_n}(-n) \xr{d_n} \sO_{\p_n}^{n}(-n+1) \xr{d_{n-1}} \cdots \xr{d_3}
  \sO_{\p_n}^{n}(-2) \xr{d_2} \sO_{\p_n}(-1) \xr{d_1} \cI_y \to 0.
  \]
  We let $\cS_y$ be the sheaf $\im(d_{n-1})$, twisted by $\sO_{\p_n}(n)$.
  We have:
  \begin{equation}
    \label{S}
    0 \to \sO_{\p_n} \xr{(h_1,\ldots,h_n)} \sO_{\p_n}^{n}(1) \to \cS_y\to 0,  
  \end{equation}
  where the $h_i$'s are linear forms on $\p_n$ and $y$ is defined by
  $\{h_1=\cdots=h_n=0\}$.

\vspace{0.2cm}

The following lemma is the key to our argument.
It is inspired on a generalization of \cite[Proposition
6.1]{valles:schwarzenberger-doi}

\begin{lem} \label{sezione}
  Let $y$ be a point of $\p_n$, and let $Z$ be a finite length
  subscheme of $\p_n$ not containing $y$. Then $y$ is unstable for
  $\sF_Z$ if and only if $\HH^0(\p_n,\cS_y \ts \cI_Z)\neq 0$.
\end{lem}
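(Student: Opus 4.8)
The plan is to compute the relevant cohomology group $\HH^{n-1}(H_y,\sF_{Z|H_y}(-n))$ defining instability of $y$, and to identify it with $\HH^0(\p_n,\cS_y\ts\cI_Z)$ by a sequence of adjunctions on the incidence variety $\F^n_n$, using the Koszul-type resolution \eqref{S} of $\cS_y$. First I would restrict the defining exact sequence \eqref{M} of $\sF_Z$ to the hyperplane $H_y$, twist by $\sO_{H_y}(-n)$, and take the long exact cohomology sequence; since $\HH^j(H_y,\sO_{H_y}(-n))=0$ for $j<n-1$ and $\HH^j(H_y,\sO_{H_y}(-n-1))=0$ for $j\le n-1$, the group $\HH^{n-1}(H_y,\sF_{Z|H_y}(-n))$ is a subquotient governed entirely by $\HH^{n-1}(H_y,\sO_{H_y}(-n))^{\ell-1}$ and the transpose matrix $M_Z^t$. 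Rather than pushing this computation through by hand, the cleaner route is the functorial one: use that $\W(\sF_Z)$ is the support of $\RR^{n-1}q_*(p^*\sF_Z(-n))$, and that by Grothendieck duality (as already set up in the proof of the inclusion lemma) $\RR q_*(p^*\sF_Z(-n))[n-1]$ is computed from $\RRHHom_{\p_n}(\RR q_*q^*(\cI_Z(1)),\sO_{\p_n}(-n))$.

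The key geometric input, following the cited \cite[Proposition 6.1]{valles:schwarzenberger-doi}, is to resolve $\cI_y$ on $\p_n$ by the Koszul complex and to recognize that the middle syzygy sheaf, twisted appropriately, is exactly $\cS_y$ sitting in \eqref{S}. Then $\HH^0(\p_n,\cS_y\ts\cI_Z)$ fits, via \eqref{S} tensored with $\cI_Z$, between $\HH^0(\p_n,\sO_{\p_n}^n(1)\ts\cI_Z)$ and $\HH^1(\p_n,\cI_Z)$; since $Z$ is finite-length and non-degenerate, $\HH^0(\p_n,\cI_Z(1))=0$, so $\HH^0(\p_n,\cS_y\ts\cI_Z)\cong \Ker\big(\HH^1(\p_n,\cI_Z)\to\HH^1(\p_n,\sO_{\p_n}(1)\ts\cI_Z)^n\big)$, a concrete linear-algebra object. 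I would then match this with the instability group: instability of $y$ for $\sF_Z$ means $\RR^{n-1}q_*(p^*\sF_Z(-n))$ does not vanish at $y$, equivalently (after the duality manipulations, exactly as in the inclusion lemma but keeping track of the full complex rather than just its top cohomology) that a certain $\Ext^1$ or $\HH^1$ group carried by $Z$ survives when we impose vanishing at $y$. The hypothesis $y\notin Z$ is what makes the two descriptions line up: it guarantees that the contribution of $y$ is ``transverse'' to $Z$, so that the obstruction to killing the unstable locus at $y$ is precisely a nonzero section of $\cS_y\ts\cI_Z$.

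Concretely the steps are: (1) restrict \eqref{M} to $H_y$, twist by $-n$, and reduce instability to a rank condition on $M_Z^t$ modulo the equation of $H_y$; (2) reinterpret this via $\RR q_*p^*$ and Grothendieck duality on $\F^n_n$ — exactly the chain $\sF_Z(-n)\cong \RR p_*\RRHHom(q^*\cI_Z(1),\sO_{\F^n_n}(-n,-n))[n-1]$ from the inclusion lemma — to rewrite the stalk at $y$ of $\RR^{n-1}q_*(p^*\sF_Z(-n))$ as an $\Ext$ group between $\cI_Z$-related sheaves on $\p_n$; (3) feed in the Koszul resolution of $\cI_y$ to convert that $\Ext$ group into $\HH^0(\cS_y\ts\cI_Z)$ via \eqref{S}; (4) check both directions of the iff, using $\HH^0(\cI_Z(1))=0$ and $y\notin Z$ to control the edge terms. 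The main obstacle I anticipate is step (2)–(3): bookkeeping the twists and shifts through Grothendieck duality on the $\p^{n-1}$-bundle $q$ while simultaneously splicing in the Koszul complex of $\cI_y$, so that the middle cohomology sheaf lands exactly on $\cS_y$ with the twist in \eqref{S} and no stray terms from the ends of the Koszul complex. Ensuring those end terms vanish is where the non-degeneracy of $Z$ and the assumption $y\notin Z$ must be used carefully.
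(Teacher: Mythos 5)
Your plan follows essentially the same route as the paper's proof: reduce instability to a rank condition on $(M_Z)_{|H_y}$ via \eqref{M} and Serre duality, transport the resulting $\Ext$ group across the incidence variety by adjunction/Grothendieck duality, and identify the dual of $\RR q_*p^*(\sO_{H_y}(-n))$ with $\cS_y$ through \eqref{incidence} and \eqref{S}, using $\HH^0(\p_n,\cI_Z(1))=0$ and the vanishing of higher $\mathcal{T}or$'s to land on $\HH^0(\p_n,\cS_y\ts\cI_Z)$. The steps you flag as delicate (twists and shifts in step (2)--(3)) are exactly the ones the paper carries out, so the proposal is correct in approach and substance.
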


\begin{proof}
  By definition $y$ is unstable for $\sF_Z$ if and only if:
  \[
  \HH^{n-1}(H_y,\sF_Z(-n)) \neq 0.
  \]
  In view of the exact sequence \eqref{M}, 
  this is equivalent to say that, restricting the matrix $M_Z^t$ to
  $H_y$ and taking cohomology, we get a 
  non-zero cokernel of: 
  \[
  \HH^{n-1}(H_y,\sO_{H_y}^{\ell-(n+1)}(-n-1)) \xr{(M_Z^t)_{| H_y}}
  \HH^{n-1}(H_y,\sO_{H_y}^{\ell-1}(-n)).
  \]
  By Serre duality, this means that 
  \[
  \HH^{0}(H_y,\sO_{H_y}^{\ell-1}) \xr{(M_Z)_{| H_y}}
  \HH^{0}(H_y,\sO_{H_y}^{\ell-(n+1)}(1)) 
  \]
  has non-trivial kernel.
  Recalling by the proof of Proposition
  \ref{torsion} that $\RR p_* (q^*(\cI_Z(1))$ is the cone of the map $M_Z$,
  we see that this is equivalent to say that:
  \[
  \Ext^1_{\p^n}(\sO_{H_y},\RR p_* (q^*(\cI_Z(1))) \neq 0.
  \]
  Since $(p^*,\RR p_*)$ is an adjoint pair, the above extension group is
  isomorphic to:
  \[
  \Ext^1_{\F^n_n}(p^*(\sO_{H_y}),q^*(\cI_Z(1))).
  \]
  
  We use again the left adjoint functor to $q^*$ (recall that it is
  $\RR q_*(- \ts \sO_{\F^n_n}(-n,1))[n-1]$). 
  The above group is thus isomorphic to:
  \begin{equation}
    \label{Ext}
  \Ext^{2-n}_{\p_n}(\RR q_* p^*(\sO_{H_y}(-n)),\cI_Z).    
  \end{equation}
  Note also that we can compute \eqref{Ext} as:
  \begin{equation}
    \label{H0}
  \HH^{\cdot}(\p_n,\RRHHom_{\p_n}\left(\RR q_* p^*(\sO_{H_y}(-n)[2-n]) ,
  \sO_{\p_n} \right)  \ts \cI_Z).
  \end{equation}

  Let us now compute $\RR q_* p^*(\sO_{H_y}(-n))$. Making use of
  \eqref{incidence},
  we get a distinguished triangle:
  \[
  \RR q_* p^*(\sO_{H_y}(-n)) \to \sO_{\p_n}(-1)^n[-n+2] \xr{P_y}
  \sO_{\p_n}[-n+2] \shift % \RR q_* p^*(\sO_{H_y}(-n))[1].
  \]
  Here, it is easy to see that $P_y$ is a matrix of linear forms
  defining $y$ in $\p_n$.
  Dualizing the above diagram, we get an exact sequence (of sheaves):
  \[
  0 \to \sO_{\p_n} \xr{P_y^t} \sO_{\p_n}(1)^n\to
  \RRHHom_{\p_n}\left(\RR q_* p^*(\sO_{H_y}(-n)),\sO_{\p_n}\right)[-n+2] \to 0.
  \]
  By the definition of the sheaf $\cS_y$, we have thus an isomorphism:
  \[
  \cS_y \cong \RRHHom_{\p_n}\left(\RR q_*
    p^*(\sO_{H_y}(-n)),\sO_{\p_n}\right)[-n+2].\]
  Then the space appearing in \eqref{H0} is non-zero if and only if
  \[
  \HH^{0}(\p_n, \cS_y \overset{\mathbf{L}}{\ts} \cI_Z) \neq 0,
  \]
  where the notation above stands for left-derived tensor product.
  But one easily proves that $\mathcal{T}or_j(\cS_y,\cI_Z)=0$ for
  $j>0$, so \eqref{H0} is non-zero if and only if
  \[
  \HH^{0}(\p_n, \cS_y {\ts} \cI_Z) \neq 0.
  \]
  So $y$ is unstable if and only if the above vector space is not
  zero, and the lemma is proved.
\end{proof}

\begin{lem} \label{sezione-KW}
  Let $y$ be a point and $Z$ be a  finite-length, non-degenerate
  subscheme of $\p_n$, not containing $y$.
  Then $\HH^0(\p_n,\cS_y \ts \cI_Z)\neq 0$ if and only if $Z$
  is contained in the rank-$1$ locus of a $2\times n$ matrix $M$ of linear forms
  having non-proportional rows, with one row defining $y$.
\end{lem}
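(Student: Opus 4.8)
The plan is to make a global section of $\cS_y\ts\cI_Z$ fully explicit by reading it off the defining sequence \eqref{S}, and to recognise the resulting datum as a $2\times n$ matrix of linear forms. The first ingredient is a bookkeeping fact: since \eqref{S} is a length-one locally free resolution of $\cS_y$, one has $\mathcal{T}or_1(\cS_y,\sO_Z)=0$, because this group is the kernel of the multiplication map $\sO_Z\xr{(h_1,\ldots,h_n)}\sO_Z(1)^n$, which is injective as $y\notin\supp Z$ forces at least one $h_i$ to be invertible at each point of $Z$ (the same argument handles $\cI_Z$ in place of $\sO_Z$). Hence $\cS_y\ts\cI_Z$ is the subsheaf of $\cS_y$ appearing in $0\to\cS_y\ts\cI_Z\to\cS_y\to\cS_y\ts\sO_Z\to 0$, and $\cS_y\ts\sO_Z=\cok\bigl(\sO_Z\xr{(h_i)}\sO_Z(1)^n\bigr)$.

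Next, since $\HH^1(\p_n,\sO_{\p_n})=0$, the sequence \eqref{S} identifies $\HH^0(\p_n,\cS_y)$ with the space of $n$-tuples of linear forms $(b_1,\ldots,b_n)$ on $\p_n$ modulo the line spanned by the reference row $(h_1,\ldots,h_n)$; such a class $\sigma$ is nonzero exactly when $(b_i)$ is not a scalar multiple of $(h_i)$, i.e.\ when the $2\times n$ matrix $M$ with rows $(h_1,\ldots,h_n)$ and $(b_1,\ldots,b_n)$ has non-proportional rows. By the short exact sequence of the previous paragraph, $\HH^0(\p_n,\cS_y\ts\cI_Z)\neq 0$ if and only if some such nonzero $\sigma$ restricts to $0$ in $\HH^0(\cS_y\ts\sO_Z)$. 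As $Z$ is finite, taking global sections is exact, and using $\cS_y\ts\sO_Z=\cok(\sO_Z\xr{(h_i)}\sO_Z(1)^n)$ this vanishing says precisely that there exists $g\in\HH^0(\sO_Z)$ with $b_i|_Z=g\,h_i|_Z$ for all $i$.

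Finally I would match this last condition with the (scheme-theoretic) inclusion of $Z$ in the rank-$1$ locus $\{\,h_ib_j-h_jb_i=0\,\}$ of $M$. One direction is immediate: $b_i|_Z=g\,h_i|_Z$ gives $(h_ib_j-h_jb_i)|_Z=0$, i.e.\ every $2\times 2$ minor of $M$ lies in $\HH^0(\cI_Z(2))$. For the converse, if all these minors lie in $\cI_Z$, then at each point $z\in Z$ some $h_{i_0}$ is invertible, the element $g|_z:=(b_{i_0}/h_{i_0})|_z\in\sO_{Z,z}$ does not depend on the choice of $i_0$ (this is exactly one of the vanishing minors read on $Z$) and satisfies $b_i|_z=g|_z\,h_i|_z$; these assemble to $g\in\HH^0(\sO_Z)=\bigoplus_z\sO_{Z,z}$. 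It remains to reconcile non-proportionality of the rows of $M$ with $\sigma\neq 0$, which is where non-degeneracy enters: if $(b_i)=\lambda(h_i)$ for a scalar $\lambda$ then $(g-\lambda)h_i|_Z=0$ for all $i$ forces $g=\lambda$, whence $\sigma=0$; conversely $\HH^0(\cI_Z(1))=0$ shows that a lift $b_i\in\HH^0(\sO_{\p_n}(1))$ of $g\,h_i|_Z$ is unique, so $\sigma$ is recovered unambiguously. One also notes that any $n$ linear forms cutting out $y$ differ from $(h_1,\ldots,h_n)$ by an invertible scalar matrix, under which \eqref{S} is preserved, so there is no loss in taking the distinguished row of $M$ to be the reference row. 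Assembling the three steps proves both implications; the only genuinely delicate point is this last translation between the abstract vanishing $\sigma|_Z=0$ and the determinantal containment, together with the patching of the local $g|_z$ — everything else is a formal unwinding of \eqref{S}.
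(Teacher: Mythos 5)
Your argument is correct and follows essentially the same route as the paper: a section of $\cS_y\ts\cI_Z$ is a section of $\cS_y$ vanishing on $Z$, it lifts through \eqref{S} to an $n$-tuple of linear forms non-proportional to $(h_1,\ldots,h_n)$, and the vanishing on $Z$ is exactly the containment of $Z$ in the rank-$1$ locus of the resulting $2\times n$ matrix. You simply make explicit two points the paper leaves implicit, namely the $\mathcal{T}or$-vanishing identifying $\HH^0(\cS_y\ts\cI_Z)$ with the kernel of restriction to $Z$, and the local-unit argument translating $\sigma|_Z=0$ into the scheme-theoretic vanishing of the $2\times 2$ minors on $Z$.
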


\begin{proof}
Recalling the exact sequence \eqref{S} defining $\cS_y$, we let
  $h_1,\ldots,h_n$ be a regular sequence defining $y \in \p_n$,
  and we note that
  a section in $\HH^0(\p_n,\cS_y \ts \cI_Z)$ is given by a global
  section $s$ of $\cS_y$ such that $s$ vanishes along $Z$.
  In turn, $s$ lifts to $\tilde{s}$ as in the diagram:
  \[
  \xymatrix@C+3ex{
    &&& \sO_{\p_n} \ar^-{s}[d] \ar@{.>}_-{\tilde{s}}[dl]\\
    0 \ar[r] & \sO_{\p_n} \ar^-{(h_1,\ldots,h_n)}[r] &
    \sO_{\p_n}^{n}(1) \ar[r] & \cS_y \ar[r] & 0. 
  }
    \]
    Now $\tilde{s}$ is given by $(g_1,\ldots,g_n)$, where the $g_i$'s
    are linear forms and the row $(g_1,\ldots,g_n)$ is not proportional to
    $(h_1,\ldots,h_n)$.
    Then in order for $s$ to vanish on $Z$, we must have that $Z$
    is contained in the locus $Y$ cut by the $2\times 2$ minors of the
    matrix:
    \[
    M = 
    \begin{pmatrix}
      h_1 & \cdots & h_n \\
      g_1 & \cdots & g_n
    \end{pmatrix},
    \]
    Note that $Y$ is not all of $\p_n$, because the two rows of $M$ are
    not proportional.
    Since all the construction is reversible, the lemma is proved.
  \end{proof}

  \begin{lem} \label{lemma-KW}
    Let $Z$ be a finite-length, set-theoretically non-degenerate
    subscheme of $\p^n$ and $y \in \p_n$.
    Then the equivalent conditions of the previous lemma are satisfied if and only
    $Z$ is contained in a KW variety $Y$ of type $(d;s)$ with either
    $d>0$ and $y$ is in the curve part of $Y$, or $d=0$, and $y$ is the distinguished point
    of $Y$.
  \end{lem}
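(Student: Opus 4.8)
The plan is to prove both implications by describing explicitly the locus where a $2\times n$ matrix $M$ of linear forms with non-proportional rows, one row cutting out $y$, has rank $\le 1$ --- this is precisely what the classical Kronecker--Weierstrass classification of such matrices (equivalently, of matrix pencils) accomplishes, and it is the reason for the name of the KW varieties. Fix $M$ as in Lemma \ref{sezione-KW}, with rows $r_1=(h_1,\dots,h_n)$ and $r_2=(g_1,\dots,g_n)$, the $h_j$ being $n$ linearly independent linear forms with common zero $y$; let $Y=\{\rk M\le 1\}\subset\p_n$, so $y\in Y$. For $[a:b]\in\p^1$ set $\Lambda_{[a:b]}=V(ag_1-bh_1,\dots,ag_n-bh_n)$, a linear subspace of $\p_n$. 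Then $\rk M(z)\le 1$ exactly when $ag_j(z)=bh_j(z)$ for all $j$ for some $[a:b]$, i.e.\ when $z\in\Lambda_{[a:b]}$; hence $Y=\bigcup_{[a:b]\in\p^1}\Lambda_{[a:b]}$, a closed subset (the image under the first projection of the incidence locus $\{ag_j(z)=bh_j(z),\ \forall j\}\subset\p_n\times\p^1$). Since the $h_j$ cut out the reduced point $y$ we have $\Lambda_{[0:1]}=\{y\}$, so the $n$ forms $ag_j-bh_j$ are linearly independent for generic $[a:b]$ and $\Lambda_{[a:b]}$ is a single point outside a finite set $\{[a_1:b_1],\dots,[a_s:b_s]\}$; put $L_i=\Lambda_{[a_i:b_i]}$ and $n_i=\dim L_i\ge 1$. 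Finally $\bigcap_{[a:b]}\Lambda_{[a:b]}=V(r_1,r_2)\subseteq\{y\}$ is the rank-$0$ locus of $M$, and any two distinct $\Lambda$'s meet exactly along it, so the $L_i$'s meet pairwise exactly along $V(r_1,r_2)$.

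Two cases occur. If $V(r_1,r_2)=\emptyset$ (equivalently $r_2(y)\ne 0$), then the single point of $\Lambda_{[a:b]}$ genuinely varies with $[a:b]$, so these points sweep out a rational curve $C$ (the map $[a:b]\mapsto\Lambda_{[a:b]}$ is non-constant, since a constant value would lie in $\bigcap_{[a:b]}\Lambda_{[a:b]}=\emptyset$), namely the image of the morphism $\sigma\colon\p^1\to\p_n$ obtained by solving $ag_j-bh_j=0$ through Cramer's rule; here $\sigma([0:1])=y$, so $y\in C$, and $\sigma$ is birational onto $C$ since two distinct generic $\Lambda$'s are disjoint. That $C$ is a rational normal curve of degree $d=\dim\langle C\rangle$, that $\langle C\rangle\cap L_i=C\cap L_i=\{\sigma([a_i:b_i])\}$ is a single point of $C$, that the $L_i$'s are mutually disjoint, and that $d+\sum_i n_i=n$, is exactly the content of the Kronecker--Weierstrass reduction of $M$ into elementary blocks (one catalecticant block $\bigl(\begin{smallmatrix}z_0&\cdots&z_{d-1}\\ z_1&\cdots&z_d\end{smallmatrix}\bigr)$ for $C$, one block of width $n_i$ for each $L_i$ glued at its point of $C$), using also that $Z\subset Y$ together with $Z$ set-theoretically non-degenerate forces $\langle Y\rangle=\p_n$, which pins down the numerics. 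Thus $Y$ is a KW variety of type $(d;s)$ with $d>0$ and $y$ on its curve part. If instead $V(r_1,r_2)=\{y\}$ (i.e.\ $r_2(y)=0$), then every $\Lambda_{[a:b]}$ contains $y$, the generic one equals $\{y\}$, and $Y=L_1\cup\dots\cup L_s$ with the $L_i$'s meeting pairwise only at $y$; again $\langle Y\rangle=\p_n$ gives $\sum_i n_i=n$, so $Y$ is a KW variety of type $(0;s)$ with distinguished point $y$. In both cases $Z\subset Y$, which is the ``only if'' direction.

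For the converse, assume $Z\subset Y$ with $Y$ a KW variety of type $(d;s)$ whose curve part contains $y$ when $d>0$ (or whose distinguished point is $y$ when $d=0$). It suffices to realise $Y$ itself as $\{\rk M\le 1\}$ for a $2\times n$ matrix $M$ of linear forms with non-proportional rows, one of which cuts out $y$: then $Z\subset Y\subset\{\rk M\le 1\}$, and Lemma \ref{sezione-KW} yields $\HH^0(\p_n,\cS_y\ts\cI_Z)\ne 0$, one of the two equivalent conditions in its statement. Such an $M$ is built in Kronecker--Weierstrass block form: one picks coordinates on $\p_n$ so that $\langle C\rangle$ carries the catalecticant equations of $C$ with $y$ at the point $[0:\dots:0:1]$, takes the catalecticant block for $C$ (present only when $d>0$) together with one elementary block of width $n_i$ for each $L_i$, glued at the coordinate point of $C$ through which $L_i$ passes, and arranges the coordinates so that the first rows of all these blocks jointly form $n$ independent linear forms vanishing exactly at $y$. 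This establishes the lemma. The parts I expect to require the most care are the bookkeeping inside the Kronecker--Weierstrass reduction --- that the curve part is a rational normal curve of degree exactly $\dim\langle C\rangle$ and that the incidence and dimension relation $d+\sum_i n_i=n$ hold --- and, in the converse, checking that the block normal form can be arranged compatibly with the marked point $y$.
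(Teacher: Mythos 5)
Your argument is correct and is essentially the paper's: both reduce the rank-one locus of $M$ to the Kronecker--Weierstrass normal form of the associated pencil, identify the curve part and the linear spaces from its blocks, and reverse the block construction for the converse; your fiberwise description $Y=\bigcup_{[a:b]}\Lambda_{[a:b]}$ is exactly the paper's cokernel sheaf $\sL$ on $\p^1$ read off fiber by fiber. The one point you should make fully explicit is the one you flag: if some block has a nontrivial Jordan structure, then $\dim\Lambda_{[a_i:b_i]}$ equals the number of Jordan blocks at $p_i$ rather than the torsion length $n_i$, so $d+\sum_i\dim\Lambda_{[a_i:b_i]}<n$ and $\langle Y\rangle\subsetneq\p_n$ --- which is precisely what your appeal to the set-theoretic non-degeneracy of $Z$ excludes (the paper instead takes $n_i$ to be the torsion length, so that $d+\sum_i n_i=n$ is automatic, and needs non-degeneracy only in the $d=0$, $s=1$ case).
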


  \begin{proof}
    Let us assume that the conditions of the previous lemma are
    satisfied, and look for the KW variety $Y$.
    So let us consider the matrix $M$ given by the above lemma as a morphism of sheaves:
    \[
    \sO_{\p_n}(-1)^n \to \sO_{\p_n}^2.
    \]
    We have that $Z$ is contained in the rank-$1$ locus of $M$, hence
    in the support of the cokernel sheaf $\sT$ of the above map, hence
    in the image in $\p_n$ of the natural map $\p(\sT) \to \p_n$.

    The matrix $M$ can be written in coordinates as $M_{i,j} = \sum_{k=0}^{n}
    a_{i,j,k} z_k$ for some scalars $a_{i,j,k}$, with $i=0,1$ and $j=0,\ldots,n-1$.
    This gives a matrix $N$ of size $n \times (n+1)$, this time over
    $\kk[\xi_0,\xi_1]$, by: 
    \begin{equation}
      \label{N}
      N_{j,k} = \sum_{i=0,1} a_{i,j,k} \xi_i.      
    \end{equation}
    Therefore, we think of the above matrix $N$ as a map:
    \begin{equation}
      \label{P1}
    N : \sO_{\p^1}(-1)^{n} \to \sO_{\p^1}^{n+1},
    \end{equation}
    where the target space is identified with $V  \ts \sO_{\p^1}$,
    with $V = \HH^0(\p_n,\sO_{\p_n}(1))$.

    Note that this map is injective.
    Indeed, if $y$ is defined by the forms $h_1,\ldots,h_n$, up to a change of basis we may assume
    $h_i = z_i$, so that the identity matrix of size $n$ is a
    submatrix $N$ evaluated at $(1:0)$.
    The sheaf $\sL = \cok(N)$ decomposes 
    as:
    \begin{equation}
      \label{decomposition}
    \sL  \cong \sO_{\p^1}(d) \oplus \sO_{\p^1,p_1}^{n_1}\oplus \cdots
    \oplus \sO_{\p^1,p_s}^{n_s},
    \end{equation}
    for some distinct points $p_i \in \p^1$, and some integers
    $d,n_1,\ldots,n_s \in [0,n]$. 
    Since the sheaf $\sL$ has degree $n$, we must have $d + n_1 + \cdots +
    n_s=n$.

    The matrix $N$ is classified by its standard Kronecker-Weierstrass (KW)
    form (hence the name of $Y$); we refer for this standard form for
    instance to \cite[Chapter 19]{burgisser-clausen-shokrollahi}.
    This means that $N$ can be written, in an appropriate basis, in
    block form like:
    \begin{equation} \label{block}
    N = \left(
      \begin{array}{c|c|c|c}
        N_0 & 0 & \cdots & 0 \\
        \hline 
        0 & N_1 & & 0 \\
        \hline 
        \vdots && \ddots \\
        \hline 
        0 & 0 & & N_s \\
      \end{array} \right).
    \end{equation}
    Here, $N_0$ is of size $d \times (d+1)$,
    with $\cok(N_0) \cong \sO_{\p^1}(d)$ and 
    $N_i$ is a square matrix of size $n_i$ that degenerates on $p_i$
    only. 
    For $i>0$, each $N_i$ can be further decomposed into its normal Jordan
    blocks, which are all of size one if and only if $N_i$ is diagonal.
     Note also that $N_0$ can be written as:
     \begin{equation}
       \label{N0}
    N_0 = \left(
      \begin{array}{ccccc}
        \xi_0 & 0 & \cdots & 0 \\
        \xi_1 & \xi_0 & \ddots & \vdots  \\
        0 & \ddots & \ddots & 0 \\
        \vdots & \ddots & \xi_1 & \xi_0 \\
        0 & \cdots  & 0 & \xi_1 \\
      \end{array} \right).
     \end{equation}

    Let us show that, with these elements, one can define $Y$.
    \begin{description}
    \item[Case $d>0$] 
    In this case, since $d + n_1 + \cdots +
    n_s=n$, we have $1 \le n_j \le n-1$ for all $j$.
    We define then the curve $C$ as the image of $\p(\sL)$ in $\p_n$
    obtained by taking global sections of the quotient $\sO_{\p^1}(d)$
    of $\sL$.
    Namely, $C$ is just $\p^1$ mapped to $\p_n$ by $\sO_{\p^1}(d)$,
    and spans the $d$-dimensional linear subspace $L =
    \p(\HH^0(\p^1,\sL))$ corresponding to the projection 
    $\HH^0(\p^1,\sL) \to \HH^0(\p^1,\sO_{\p^1}(d))$.
    In an appropriate basis, the curve $C$ is cut in the space $L = \{z_{d+1}=\cdots=z_n=0\}$
    as the rank-$1$ locus of:
        \[
    \begin{pmatrix}
      z_1 & \cdots & z_d \\
      z_0 & \cdots & z_{d-1}
    \end{pmatrix}.
    \]
    We define then $L_j$ as the cone over
    the image in $\p_n$ of $p_j$ and the space given by 
    the projection $\HH^0(\p^1,\sL) \to \HH^0(\p^1,\sO_{\p^1,p_j}^{n_j})$.
    Each $L_j$ meets $L$ only at $p_j$, and the $p_j$'s are all
    distinct if $d>0$. Since $L_i$ meets $L_j$ only along $C$, all
    linear spaces $L_j$'s are mutually disjoint for $d>0$.
    This defines the KW variety $Y = C \cup L_1 \cup \cdots \cup C_s$.

    Note that $y$ belongs to $C$. Indeed, in the basis under
    consideration, we have that $y=(1:0:\ldots:0)$, and $C$ goes
    through this point.
    Note also that $Y$ clearly contains the image 
    of $\p(\sL) \cong \p(\sT)$ in $\p_n$ under the natural map $\p(\sL) \to
    \p(\HH^0(\p^1,\sL))$. But this image is the
    rank-$1$ locus of $M$, which contains $Z$. So $Y$ contains $Z$.

    \item[Case $d=0$]
    In this case, under the decomposition \eqref{block}, we have $N_0=0$.
    The sheaf $\sL$ defines a projection of $\p^1$ to a point
    of $\p_n$, which in the basis under consideration has
    coordinates $(1:0:\ldots:0)$, i.e. this point is $y$.
    In this case, each linear space $L_j$ is a cone
    over $y$ and $\p(\HH^0(\p^1,\sO_{\p^1,p_j}^{n_j}))$, hence all
    the $L_j$'s meet only at $y$.
    Once we prove that $1 \le n_j \le n-1$
    for all $j$, we can define $Y = L_1\cup \cdots \cup L_s$,
    and clearly $Z$
    is contained in $Y$.

    So let us show $1 \le n_j \le n-1$
    for all $j$, in other words let us prove $s \ge 2$.
    Assume thus $s=1$, and note that $\sL \cong \sO_{\p^1,p}^{n}$,
    with $p_1 = p = (a:b)$, so that $N_1$ degenerates on $(a:b)$
    only.
    Note that the standard KW form of $N_1$ cannot be a
    multiple of the $n\times n$ identity matrix, times $b\xi_0 - a\xi_1$,
    for the two rows of the 
    corresponding matrix $M$ would be proportional.
    Hence the KW form of $N_1$ has at least one non-trivial
    Jordan block (i.e. of size at least $2$). Then, the corresponding
    rank-$1$ locus of $M$ is a 
    multiple structure over a linear space of dimension at most
    $n-1$. But then $Z$ is contained in a multiple structure over a hyperplane, a
    contradiction, since $Z$ is set-theoretically non-degenerate.
    \end{description}

    To prove the converse implication, let us be given a KW variety
    $Y$ of type $(d;s)$ containing $Z$, with $d>0$, let $L_0$ be the
    span of the curve part $C$ of $Y$ and let $L_1,\ldots,L_s$ the linear spaces of $Y$.
    For each $L_i$, we choose a basis of an $(n_i-1)$-dimensional
    linear subspace disjoint from $L_0$, and we complete this to a
    basis of $V$ by stacking a basis of $L_0$.
    We take
    $N_0$ as in \eqref{N0}, and, for $i=1,\ldots,s$, we let $(a_i,b_i)$ be the points on
    $\p¹$ corresponding to the intersection $C\cap L_i$ under the
    parametrization $\p¹ \to C$.
    We define $N_i$ as 
    a square matrix of size $n_i$
    having $b_i\xi_0 -
    a_i\xi_1$ on the diagonal and zero anywhere else.
    We have thus presented the matrix as is \eqref{N}, hence we have
    a $2 \times n$ of the form $M_{i,j} = \sum_{k=0}^{n}
    a_{i,j,k} z_k$ in the coordinates given by the chosen basis.
    The first row of $M$ thus defines $y$, and the rank-$1$ locus of
    $M$ is $Y$.

    If $d=0$ we choose a projection $\p^1 \to \{ y \}$, and we choose
    $s$ distinct points $(a_i:b_i)$ in $\p^1$.
    We still have the
    matrices $N_i$, and the matrix $N_0$ is the zero matrix with one row.
    Constructing $N$ as in \eqref{block}, the same choice of basis for $V$
    allows to write the matrix $M$, whose first row defines $y$ and
    whose rank-$1$ locus is $Y$.
  \end{proof}

  We can now prove our main result, Theorem \ref{main}.
  Namely, let $Z \subset \p_n$ be a finite-length,
  set-theoretically non-degenerate subscheme.
  Then we have to prove that the set of unstable hyperplanes
  $\W(\sF_Z)$ contains at least another point $y \not\in Z$ if and only if $Z$ is
  contained in a KW variety $Y$ of type $(d;s)$ whose distinguished
  point (if $d=0$) does not lie in $Z$.
  %We point out that 
  %the theorem holds under the weaker assumption that $Z$ is not
  %contained in a double hyperplane.

  \begin{proof}[Proof of Theorem \ref{main}]
    Let us assume that $Z$ is not Torelli, and prove that $Z$ is
    contained in a KW variety.
    Since $Z$ is not Torelli, there is a point
    $y \in \p_n$, not belonging to $Z$, unstable for $\sF_Z$.
    We can apply Lemmas \ref{sezione}, \ref{sezione-KW},
    \ref{lemma-KW} since $Z$ is set-theoretically non-degenerate.
    Then, there is
    a KW variety $Y$ containing $Z$, and we are done.

    Conversely given a KW variety $Y$ of type $(d;s)$ containing $Z$,
    we look at two cases.
    If $d=0$, then by assumption $Z$ does not
    contain the distinguished point $y$ of $Y$. But by 
    Lemmas \ref{sezione}, \ref{sezione-KW},
    \ref{lemma-KW}, the point $y$ is unstable for $\sF_Z$, so $Z$ is
    not Torelli.
    If $d>0$, we let $y$ be any point of the curve part $C$ of $Y$. By 
    Lemmas \ref{sezione}, \ref{sezione-KW},
    \ref{lemma-KW}, $y$ is unstable for $Z$. But $Z$ is of
    finite length, so there is $y \in C \setminus Z$ and $Z$ is not Torelli.
  \end{proof}

Recall Dolgachev's conjecture from the introduction (see
\cite[Conjecture 5.8]{dolgachev:logarithmic}).
It states that a semi-stable
arrangement of hyperplanes $Z$ (i.e. such that $\sF_Z$ is a
semi-stable sheaf) fails to be Torelli if and only if $Z$ belongs to a
stable rational curve of degree $n$.

  \begin{corol}
    The ``only if'' implication of Dolgachev's conjecture is true. 
  \end{corol}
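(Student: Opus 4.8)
The plan is to deduce the ``only if'' implication of Dolgachev's conjecture as a direct consequence of Theorem \ref{main}, by showing that a stable rational curve of degree $n$ in $\p_n$ is a special case of a Kronecker--Weierstrass variety, and that the extra ``distinguished point'' hypothesis is vacuous in the stable-curve setting. So suppose $Z \subset \p_n$ is semistable (i.e.\ $\sF_Z$ is Gieseker-semistable), and assume $Z$ is contained in a stable rational curve $X = C_1 \cup \cdots \cup C_s$ of degree $n$, with $\deg C_i = d_i$, $d_1 + \cdots + d_s = n$, each $C_i$ spanning a $\p^{d_i}$ and the union of these spanning $\p_n$. We must produce a KW variety $Y$ containing $Z$ and verify the distinguished-point condition.

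First I would observe that $Z$ is automatically set-theoretically non-degenerate: indeed if $Z$ were contained in a hyperplane, then $\sF_Z$ would split off a summand (one reads this from the resolution \eqref{M} together with Proposition \ref{torsion}), contradicting semistability. Hence Theorem \ref{main} applies. The remaining task is purely geometric: a stable rational curve in $\p_n$ of degree $n$ is a KW variety. To see this, I would pick one component $C_i$ of maximal positive degree and call it the curve part $C = C_i$ (of degree $d = d_i \ge 1$), spanning $L = \p^{d}$; then I would show that the union of the remaining components organizes into linear spaces attached along $C$ as required by Definition \ref{KW}. The point here is that, because $X$ has arithmetic genus $0$, its dual graph is a tree, so peeling off $C$ leaves a disjoint union of subtrees, each attached to $C$ at a single point of $C$. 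A subtree of linearly embedded rational curves whose spans are in ``general'' relative position (forced again by $p_a = 0$) spans a linear subspace $L_j$ of dimension equal to the sum of the degrees in that subtree; condition i) of Definition \ref{KW} (that $L \cap L_j$ is the single attaching point, lying on $C$) and condition ii) (mutual disjointness of the $L_j$'s) both follow from the genus-$0$ (tree) combinatorics. The only mild subtlety is the degenerate case $d = 0$: this occurs precisely when every component of $X$ has degree... no, it cannot, since $\sum d_i = n \ge 1$ forces some $d_i \ge 1$, so in fact we always land in the case $d > 0$, and the distinguished-point clause of Theorem \ref{main} is vacuously satisfied. Therefore $Z$ is contained in a KW variety with $d > 0$, and Theorem \ref{main} gives that $Z$ is not Torelli.

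The main obstacle I anticipate is the bookkeeping in the geometric lemma that a degree-$n$ stable rational curve is a KW variety of type $(d;s)$ with $d \ge 1$ --- specifically, showing that each ``subtree'' hanging off the chosen component $C$ really does span a linear space of the expected dimension meeting $C$ in exactly one point, and that distinct subtrees span disjoint linear spaces. This is where arithmetic genus zero is used essentially: one argues by induction on the number of components, using that adding a rational curve of degree $d'$ meeting the current configuration transversally at one point and not increasing the arithmetic genus forces the new component to span a $\p^{d'}$ meeting the old span in just that point. Once this lemma is in place, the corollary is immediate from Theorem \ref{main}; no semistability is actually needed for this direction, though it is what makes the hypothesis of Dolgachev's conjecture match ours.
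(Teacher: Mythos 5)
Your proposal is correct and follows the same strategy as the paper: reduce to Theorem \ref{main} by showing that a stable rational curve of degree $n$ sits inside a KW variety with $d>0$, so the distinguished-point clause never intervenes. The paper's own proof is a one-liner: fix a component $C=C_0$ and replace each remaining component $C_i$ by its linear span $L_i$. Your refinement --- grouping the components other than $C$ into the subtrees of the dual graph hanging off $C$ and taking $L_j$ to be the span of each subtree --- is actually the more careful version: for a chain $C_0-C_1-C_2$ the paper's literal construction gives an $L_2$ disjoint from $\mathrm{span}(C_0)$ and meeting $L_1$, violating both conditions of Definition \ref{KW}, whereas your $L_1=\mathrm{span}(C_1\cup C_2)$ repairs this. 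Your dimension count (span of the whole configuration equals $n=\sum d_i$, while a tree with $s$ vertices has $s-1$ nodes, forcing every incidence to be exactly one point and all other spans to be disjoint) is the right justification and is what the paper leaves implicit. The remark that semistability forces $Z$ to be set-theoretically non-degenerate, so that Theorem \ref{main} applies, is also a point the paper omits; it follows from Lemma \ref{degenerate}, since a degenerate $Z$ makes $\sO_{\p^n}(-1)$ a destabilizing quotient of $\sF_Z$.
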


  \begin{proof}
    If $Z$ belongs to a curve $C = C_0 \cup \cdots \cup C_s$ as above,
    then we fix one component $C=C_0$ 
    and we define $L_i$ as the span of $C_i$, for $i>0$. The variety $Y = C\cup L_1 \cup \cdots
    \cup L_s$ is a KW variety containing $Z$, so $Z$ is not Torelli.
  \end{proof}

\begin{corol}
  A finite length subscheme $Z$ of $\p^2$, whose set-theoretic support
  is not contained in a
  line, is Torelli if and only if it is not contained in a conic.
\end{corol}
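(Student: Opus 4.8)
The plan is to obtain this as a direct specialization of Theorem \ref{main}, the whole content being a classification of the Kronecker--Weierstrass varieties that can live in a plane. So I would start with $Z \subset \p_2$ a finite-length subscheme whose set-theoretic support is not a line; this is precisely the hypothesis that $Z$ is set-theoretically non-degenerate, so Theorem \ref{main} applies and says that $Z$ fails to be Torelli if and only if $Z$ is contained in a KW variety $Y \subset \p_2$ of some type $(d;s)$, with the additional proviso that the distinguished point of $Y$ avoid $Z$ when $d=0$.

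The first step is to enumerate the admissible types when $n=2$. In Definition \ref{KW} the linear parts $L_i$ have dimension $n_i$ with $1 \le n_i \le n-1 = 1$, so every $n_i$ equals $1$, and the relation $n = d + n_1 + \cdots + n_s$ forces $d + s = 2$; hence $(d;s)$ is one of $(2;0)$, $(1;1)$, $(0;2)$. In the first case $Y = C$ is a smooth conic. In the last two cases $Y = L_1 \cup L_2$ is a union of two distinct lines: they are distinct because in type $(1;1)$ the intersection $L \cap L_1$ is a single point, and in type $(0;2)$ the two lines meet only at the distinguished point. Thus every KW variety in $\p_2$ is a conic; and conversely a smooth conic is a KW variety of type $(2;0)$, while a union of two distinct lines $L_1\cup L_2$ is a KW variety of type $(1;1)$ (take the curve part to be $L_1$ and the single linear part to be $L_2$).

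The two implications then fall out. For the ``only if'' direction I would argue by contraposition: if $Z$ is not Torelli, Theorem \ref{main} puts $Z$ on a KW variety, which by the classification is a conic, so $Z$ is contained in a conic. For the ``if'' direction, again by contraposition, suppose $Z$ is contained in a conic $Q$. Since the support of $Z$ is not contained in a line, $Q$ is not a double line, so $Q$ is either a smooth conic or a union of two distinct lines; in either case the previous paragraph exhibits $Q$ as a KW variety of a type with $d>0$, so the distinguished-point clause in Theorem \ref{main} is vacuous and we conclude that $Z$ fails to be Torelli.

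The only delicate point — what I would flag as the sole (and mild) obstacle — is the handling of a reducible conic: one must present it as a KW variety of type $(1;1)$ rather than of type $(0;2)$, precisely so as to sidestep the hypothesis that the distinguished point avoid $Z$, which can genuinely fail (for instance when $Z$ contains the node of the reducible conic). Everything else is bookkeeping with Theorem \ref{main} and Definition \ref{KW}.
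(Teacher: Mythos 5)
Your proof is correct and is precisely the specialization of Theorem \ref{main} that the paper intends (the paper states this corollary without giving a proof): the classification of KW varieties in $\p_2$ as smooth conics or unions of two distinct lines, and the trick of presenting a reducible conic as a KW variety of type $(1;1)$ so that the distinguished-point clause never intervenes, are exactly the points that need to be made. The only quibble is that your labels for the two directions are swapped --- ``not Torelli $\Rightarrow$ contained in a conic'' is the contrapositive of the ``if'' direction of the corollary, not of the ``only if'' --- but both implications are in fact established.
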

Hence Dolgachev's conjecture (see \cite[Chapter
5]{dolgachev:logarithmic}) holds on 
$\p^2$. In fact something quite stronger is 
true, for no stability condition is required in our result; in
fact $\sF_Z$ needs not even be torsion-free.

We note in the next corollary that,
for generic arrangements, our approach gives a quick proof of
some of the main 
results of \cite{dolgachev-kapranov:arrangements} and \cite{valles:instables}.
Also, we note some simple examples of non-generic Torelli arrangements. 

\begin{corol}
  Let $Z$ be a subscheme of length $\ell < \infty$ of $\p_n$.
  \begin{enumerate}[i)]
  \item \label{T1} If the subscheme $Z$ is contained in no quadric,
    then $Z$ is Torelli; 
  \item \label{T3} assume that $Z$ is in general linear position and
    $\ell \ge n+3$.
    Then $Z$ is contained in a smooth rational normal curve of
    degree $n$ if and only if $Z$ is not Torelli.
  \end{enumerate}
\end{corol}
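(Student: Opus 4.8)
The plan is to read both statements off Theorem~\ref{main} together with the structure of Kronecker--Weierstrass varieties developed in Lemmas~\ref{sezione}--\ref{lemma-KW}.

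For part~(\ref{T1}) I would argue the contrapositive. First note that if $Z$ lies on no quadric then $Z$ is schematically non-degenerate, since $Z\subset\{h=0\}$ would give $h^{2}\in\HH^{0}(\cI_Z(2))$. Now assume $Z$ is not Torelli, so there is a point $y\notin Z$ that is unstable for $\sF_Z$. By Lemma~\ref{sezione} this means $\HH^{0}(\p_n,\cS_y\ts\cI_Z)\neq 0$, and then Lemma~\ref{sezione-KW} puts $Z$ inside the rank-$1$ locus $Y$ of a $2\times n$ matrix $M$ of linear forms with non-proportional rows. Since the rows are non-proportional, $M$ has rank $2$ at a general point of $\p_n$, so at least one $2\times 2$ minor of $M$ is a non-zero quadratic form $Q$, and $Y\subset\{Q=0\}$. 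Hence $Z\subset\{Q=0\}$ lies on a quadric, which is the contrapositive of~(\ref{T1}).

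For the ``if'' direction of part~(\ref{T3}) (that $Z$ on a rational normal curve of degree $n$ implies $Z$ not Torelli): such a curve $C$ is a KW variety of type $(n;0)$, with no distinguished point because $d=n>0$, and $Z$ is set-theoretically non-degenerate since it is in general linear position with $\ell\ge n+1$; choosing any $y\in C\setminus Z$ (possible as $Z$ is finite) and applying Theorem~\ref{main} gives that $Z$ is not Torelli. For the converse, assume $Z$ is not Torelli; Theorem~\ref{main} gives a KW variety $Y=C\cup L_{1}\cup\cdots\cup L_{s}$ of type $(d;s)$ with $Z\subset Y$ and, when $d=0$, distinguished point off $Z$. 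I claim $s=0$. Granting this, $Y=C$ is a smooth rational curve of degree $d$ spanning a linear $\p_{d}$ containing $Z$, and since $Z$ is non-degenerate this forces $d=n$, i.e. $C$ is a rational normal curve of degree $n$ through $Z$, as wanted. To prove $s=0$, suppose $s\ge 1$. The linear spans of the components of a KW variety are independent modulo their common points: writing $\widetilde{L}_i=\langle\widetilde q_i\rangle\oplus W_i$ with $\widetilde q_i$ in $\widetilde L=\operatorname{span}(C)$ when $d>0$ (respectively $\widetilde q_i=\widetilde y$ when $d=0$), one obtains a direct sum $\widetilde V=\widetilde L\oplus W_1\oplus\cdots\oplus W_s$ (respectively $\widetilde V=\langle\widetilde y\rangle\oplus W_1\oplus\cdots\oplus W_s$). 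Hence the union of $C$ with all the $L_i$ except $L_s$ spans a proper linear subspace of $\p_n$ of dimension $n-n_s$, so it carries at most $n-n_s+1$ points of $Z$ because $Z$ is in general linear position; and $L_s$, having dimension $n_s$, carries at most $n_s+1$ points of $Z$. Since distinct components of $Y$ meet only along $C$ (respectively only at $y\notin Z$ when $d=0$, where one also uses $s\ge 2$, which holds by Lemma~\ref{lemma-KW}), the set $Z$ is the disjoint union of $Z\cap(C\cup L_1\cup\cdots\cup L_{s-1})$ and $Z\cap(L_s\setminus C)$. Adding the two bounds yields $\ell\le(n-n_s+1)+(n_s+1)=n+2$, contradicting $\ell\ge n+3$; therefore $s=0$. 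In particular this reproves the pertinent results of \cite{dolgachev-kapranov:arrangements} and \cite{valles:instables}.

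I expect the one delicate point — the main obstacle — to be precisely the assertion that a proper sub-union of the components of a KW variety is linearly degenerate of the predicted dimension; this rests on the incidence conditions (i)--(ii) of Definition~\ref{KW} and the numerical relation $n=d+n_1+\cdots+n_s$. Once that is in hand, the point counts on each piece and the disjointness used in the partition of $Z$ are routine.
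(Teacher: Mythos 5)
Your proposal is correct and follows essentially the same route as the paper: part (\ref{T1}) via the observation that the rank-$1$ locus of the $2\times n$ matrix from Lemma \ref{sezione-KW} is cut by (not identically vanishing) quadratic minors, and part (\ref{T3}) via the same point count showing a KW variety with $s\ge 1$ can carry at most $n+2$ points of a set in general linear position. The extra details you supply (schematic non-degeneracy from the no-quadric hypothesis, the bound on the dimension of the span of $C\cup L_1\cup\cdots\cup L_{s-1}$) are correct refinements of steps the paper leaves implicit.
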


\begin{proof}
  The statement \eqref{T1} is clear,
  since all $2\times 2$ minors of the matrix $M$ of the
  previous lemma are quadrics.

  For \eqref{T3}, 
  we want to show that, if $\ell \ge n+3$ and $Z$ is in general linear
  position, then $Z$ is contained in a KW variety $Y$ if and only if it is contained in a rational
  normal curve of degree $n$. One direction is clear, so we assume that 
  there are $C$, $L_1,\ldots,L_s$ as in 
  Theorem \ref{main}, such that $Y = C \cup L_1 \cup \cdots \cup L_s$ contains
  $Z$, with $s\geq 1$.
  Note that the span $L'$ of $C \cup L_1 \cup \cdots \cup L_{s-1}$ has
  dimension $d+a_1+\cdots+a_{s-1}$, hence there are at most
  $d+a_1+\cdots+a_{s-1}+1$ points of $Z$ in $L'$. Also, $L_s$
  contains at most $a_s+1$ points of $Z$. Hence $Y$ contains at most 
  $d+a_1+\cdots+a_{s}+2 = n+2$ points of $Z$, so $\ell \ge n+3$
  contradicts that $Z$ be contained in $Y$.
\end{proof}

\subsection{Maximal number of unstable hyperplanes}

One can ask, given a Steiner sheaf $\sE$, how to recognize if 
$\sE$ is isomorphic to $\sF_Z$, for some $Z$ in $\p_n$. 
The next theorem gives an answer to this question.

\begin{main} \label{max}
Let $\sE$ be a sheaf having resolution:
\[
0 \to \sO_{\p^n}(-1)^{\ell-n-1} \to \sO_{\p^n}^{\ell-1} \to \sE \to 0.
\]
Assume that $\W(\sE)$ contains $\ell$ distinct points
$\{z_1,\ldots,z_\ell\}=Z$, and that $\sO_{H_{z_i}}$ is not a direct
summand of $\sE$, for any $j$.
Then $\sE$ is isomorphic to $\sF_Z$.
\end{main}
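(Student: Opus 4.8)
The plan is to realize $\sE$, exactly as one realizes $\sF_Z$ through the residue sequence \eqref{extension}, as the middle term of a short exact sequence
\[
0 \to \Omega_{\p^n} \to \sE \to \bigoplus_{i=1}^{\ell}\sO_{H_{z_i}} \to 0 ,
\]
and then to identify this sequence with the one defining $\sF_Z$. Granting such a sequence, the group $\Ext^1_{\p^n}(\bigoplus_i\sO_{H_{z_i}},\Omega_{\p^n})\cong\Hom_{\p_n}(\sO_{\p_n},\sO_Z)^{*}$ of Claim \ref{theclaim} carries the distinguished functorial class coming from the natural surjection $\sO_{\p_n}\to\sO_Z$; both the sequence above and \eqref{extension} for $\sF_Z$ realize this class (for $\sE$ one first rescales the maps $\sE\to\sO_{H_{z_i}}$ so that each induces the tautological generator on the fibre over $z_i$), so comparing extensions gives $\sE\cong\sF_Z$.

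To produce the maps, write the resolution as $0 \to \sO_{\p^n}(-1)^{\ell-n-1}\xrightarrow{M}\sO_{\p^n}^{\ell-1}\to\sE\to 0$. Applying $\Hom_{\p^n}(-,\sO_{H_y})$ identifies $\Hom_{\p^n}(\sE,\sO_{H_y})$ with the space of vectors $v$ for which every entry of $v^{\mathrm t}M$ is divisible by a fixed linear form cutting out $H_y$. On the other hand, restricting the resolution to $H_y$, twisting by $\sO_{\p^n}(-n)$, taking cohomology on $H_y\cong\p^{n-1}$ and applying Serre duality on $H_y$ (as in the proof of Lemma \ref{sezione}) identifies $\HH^{n-1}(H_y,\sE_{|H_y}(-n))$ with the dual of the same space. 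Hence $y$ is unstable for $\sE$ if and only if $\Hom_{\p^n}(\sE,\sO_{H_y})\neq 0$, and each of the $\ell$ unstable points $z_i$ yields a nonzero $\beta_i:\sE\to\sO_{H_{z_i}}$; these assemble to $\beta=(\beta_i):\sE\to\bigoplus_{i=1}^{\ell}\sO_{H_{z_i}}$.

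The crux is to show that $\beta$ is surjective and that $\sE':=\ker\beta$ is isomorphic to $\Omega_{\p^n}$. Once surjectivity is known, additivity of the Hilbert polynomial gives $\chi(\sE'(t))=\chi(\sE(t))-\ell\binom{t+n-1}{n-1}$, which by the prescribed resolution of $\sE$ and \eqref{extension} equals $\chi(\Omega_{\p^n}(t))$, and $\sE'$ has rank $n$; splicing the two–term free resolution of $\sE$ with the $\ell$ Koszul sequences $0\to\sO_{\p^n}(-1)\to\sO_{\p^n}\to\sO_{H_{z_i}}\to 0$ via the horseshoe construction yields a complex representing $\sE'$ which, by surjectivity of $\beta$, is concentrated in one degree, and a rank/minimality count then identifies it with the Euler resolution $0\to\Omega_{\p^n}\to\sO_{\p^n}(-1)^{n+1}\to\sO_{\p^n}\to 0$. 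The hypothesis that no $\sO_{H_{z_i}}$ is a direct summand of $\sE$ is precisely what is needed at this point: without it $\sE$ could be $\sF_{Z'}\oplus\sO_{H_{z_j}}$ with $Z'$ of length $\ell-1$, which has the prescribed resolution and (when $Z'$ is non–Torelli) possesses $\ell$ distinct unstable points while being no $\sF_Z$; the hypothesis rules out such splittings, equivalently forces $\beta$ to stay surjective onto each summand and $\sE'$ to be torsion–free of the expected type.

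I expect this middle step to be the main obstacle. The maps $\beta_i$ a priori need only have image an ideal sheaf inside $\sO_{H_{z_i}}$, so surjectivity of $\beta$ and the precise identification $\ker\beta\cong\Omega_{\p^n}$ demand genuine homological input, and it is exactly here that the no–direct–summand hypothesis must be brought to bear. By contrast, what precedes it (the description of unstable hyperplanes via $\Hom_{\p^n}(\sE,\sO_{H_y})$) and what follows it (the functoriality argument) are direct adaptations of the proofs of Lemma \ref{sezione} and Proposition \ref{iso}.
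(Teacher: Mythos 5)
Your overall strategy coincides with the paper's: identify $\W(\sE)$ with the locus where $\Hom_{\p^n}(\sE,\sO_{H_y})\neq 0$, assemble the resulting maps into $\beta\colon\sE\to\bigoplus_i\sO_{H_{z_i}}$, show that $\beta$ is surjective with kernel $\Omega_{\p^n}$, and pin down the extension class via Claim \ref{theclaim} (your ``rescaling'' remark is indeed how one sees that all surjective maps $\sO_{\p_n}\to\sO_Z$ give isomorphic middle terms). However, the step you yourself flag as the crux --- joint surjectivity of $\beta$ and the identification $\ker\beta\cong\Omega_{\p^n}$ --- is left unproved, and the mechanism you propose for closing it is not the right one. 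First, each individual $\beta_i$ is automatically surjective: $\sE$ is globally generated, so the image of $\beta_i$ is a subsheaf of $\sO_{H_{z_i}}$ generated by constant sections, hence $0$ or all of $\sO_{H_{z_i}}$. The genuine issue is \emph{joint} surjectivity, and this does not follow from the no-direct-summand hypothesis: for $\sE=\sF_{Z'}\oplus\sO_{H_{z_\ell}}$ the assembled map is still surjective with kernel $\Omega_{\p^n}$, so that hypothesis cannot be what drives this step.

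The paper closes the gap by an inductive peeling argument that is missing from your proposal. Given distinct unstable hyperplanes $H\neq H'$, set $\sE'=\ker(\sE\to\sO_H)$; restricting $0\to\sE'\to\sE\to\sO_H\to0$ to $H'$ and twisting by $-n$, the third term is supported on $H\cap H'\cong\p^{n-2}$ and so has no $\HH^{n-1}$, whence $\HH^{n-1}(H',\sE'_{|H'}(-n))\epi\HH^{n-1}(H',\sE_{|H'}(-n))\neq0$ and $H'$ remains unstable for $\sE'$ (which is again a Steiner-type sheaf, so the resulting nonzero map $\sE'\to\sO_{H'}$ is surjective). With $\sK=\ker(\sE'\to\sO_{H'})$ one gets $0\to\sO_{H'}\to\sE/\sK\to\sO_H\to0$, split by interchanging the roles of $H$ and $H'$, so $\sE/\sK\cong\sO_H\oplus\sO_{H'}$; iterating over the $\ell$ points produces the surjection $\sE\epi\bigoplus_i\sO_{H_{z_i}}$, whose kernel is then identified with $\Omega_{\p^n}$ by a diagram chase (it is the kernel of a surjective map $\sO_{\p^n}(-1)^{n+1}\to\sO_{\p^n}$). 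The no-direct-summand hypothesis enters only at the very end, exactly in the manner of your own counterexample: a class in $\Ext^1_{\p^n}(\bigoplus_i\sO_{H_{z_i}},\Omega_{\p^n})\cong\Hom_{\p_n}(\sO_{\p_n},\sO_Z)^*$ whose $j$-th component vanishes yields $\sO_{H_{z_j}}$ as a direct summand of the middle term, so the hypothesis forces all components to be nonzero and hence $\sE\cong\sF_Z$.
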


\begin{proof}
  Let $H$ be an unstable hyperplane of $\sE$, hence assume
  $\HH^{n-1}(H,\sE_{|H}(-n)) \neq 0$, i.e. $\HH^{n-1}(\p^n,\sE \ts \sO_H(-n)) \neq 0$.
  We have:
  \begin{align*}
  \HH^{n-1}(\p^n,\sE \ts \sO_H(-n)) & \cong
  \Ext^{n-1}_{\p^n}(\sO_{\p^n},\sE \ts \RRHHom(\sO_H(n+1)[-1],\sO_{\p^n})) \cong \\
  & \cong \Ext^{n-1}_{\p^n}(\sO_H(n+1)[-1],\sE) \cong \\
  & \cong \Hom_{\p^n}(\sE,\sO_H)^*.
  \end{align*}
  
  Looking at the resolutions of $\sE$ and $\sO_H$, one sees that any
  non-zero map $\sE \to \sO_H$ is surjective, and that 
  the kernel $\sE'$ of such a map is again a Steiner sheaf.

  Let now $H' \neq H$ be another unstable hyperplane of $\sE$. By
  the induced map $\HH^{n-1}(H',\sE'_{|H'}(-n)) \epi
  \HH^{n-1}(H',\sE_{|H'}(-n))$ we see that $H'$ is unstable for $\sE'$
  as well. Let $\sK$ by the kernel of the (surjective) map $\sE' \to
  \sO_{H'}$.
  Then $\sK$ injects in $\sE$, and we let $\sC$ be $\sE/\sK$.
  We claim that $\sC$ is isomorphic to $\sO_H \oplus \sO_{H'}$.
  Indeed, we have $\sE'/\sK \cong \sO_{H'}$, hence we get an exact
  sequence:
  \[
  0 \to \sO_{H'} \to \sC \to \sO_H \to 0.
  \]
  Switching the roles of $H$ and $H'$ provides a splitting of the
  above sequence, so that $\sC \cong \sO_H \oplus \sO_{H'}$.
  
  Iterating this procedure, we find a surjective map:
  \[
  \sE \epi \bigoplus_{i=1,\ldots,\ell} \sO_{H_{z_i}}.
  \]
  Note that the kernel of this map is $\Omega_{\p^n}$.
  Indeed, by diagram chasing, it is the kernel of a  surjective map
  $\sO_{\p^n}(-1)^{n+1} \epi \sO_{\p^n}$.
  Therefore we have and exact sequence:
  \[
  0 \to \Omega_{\p^n} \to \sE \to \bigoplus_{i=1,\ldots,\ell}
  \sO_{H_{z_i}} \to 0.
  \]

  To conclude we can use Claim \ref{theclaim}.
  Indeed, $\sF_Z$ is given, up to isomorphism, as
  the only extension of $\bigoplus_{i=1,\ldots,\ell}
  \sO_{H_{z_i}}$ by $\Omega_{\p^n}$ associated by Claim \ref{theclaim}
  to the canonical surjection $\sO_{\p_n} \to \sO_Z$.
  An extension of $\bigoplus_{i=1,\ldots,\ell}
  \sO_{H_{z_i}}$ by $\Omega_{\p^n}$ not isomorphic to $\sF_Z$
  corresponds then to a map $\sO_{\p_n} \to \sO_Z$ which is not
  surjective, say $\sO_{z_j}$ is not in the image.
  Such extension contains $\sO_{H_{z_j}}$ as a direct summand, which
  contradicts our hypothesis on $\sE$.
\end{proof}

We get the following bound on the number of unstable hyperplanes of a
Steiner sheaf.

\begin{corol}
  Let $\sE$ be a torsion-free Steiner sheaf with resolution:
  \[
  0 \to \sO_{\p^n}(-1)^{\ell-n-1} \to \sO_{\p^n}^{\ell-1} \to \sE \to 0.
  \]
  Assume that $\W(\sE)$ contains $\ell$ distinct points
  $\{z_1,\ldots,z_\ell\}=Z$ not contained in a KW variety in $\p_n$.
  Then $\W(\sE) = Z$.
\end{corol}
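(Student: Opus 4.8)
The plan is to derive the statement from Theorem~\ref{max} and Theorem~\ref{main}. First I would verify that Theorem~\ref{max} applies to $\sE$: its resolution is precisely the one required there, by hypothesis $\W(\sE)$ contains the $\ell$ distinct points $Z=\{z_1,\ldots,z_\ell\}$, and the only remaining condition to check is that no $\sO_{H_{z_i}}$ is a direct summand of $\sE$. But $\sO_{H_{z_i}}$ is a torsion sheaf, being supported on the hyperplane $H_{z_i}$, while every direct summand of the torsion-free sheaf $\sE$ is again torsion-free; hence no such summand can occur. Theorem~\ref{max} then gives an isomorphism $\sE\cong\sF_Z$, and in particular $\W(\sE)=\W(\sF_Z)$ as subschemes of $\p_n$.

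It remains to see that $\W(\sF_Z)=Z$ set-theoretically, and for this I would apply Theorem~\ref{main} to $Z$. That theorem requires $Z$ to be set-theoretically non-degenerate, so I first rule out the degenerate case: if the support of $Z$ were contained in a hyperplane $L_1\subset\p_n$ of dimension $n-1$, then picking a point $y\in L_1$ outside the finite set $Z$ and a line $L_2$ through $y$ not lying in $L_1$, the union $L_1\cup L_2$ would be a KW variety of type $(0;2)$ containing $Z$, contradicting the hypothesis that $Z$ lies in no KW variety. Hence $Z$ is a finite-length, set-theoretically non-degenerate subscheme contained in no KW variety, so by Theorem~\ref{main} it is Torelli, that is $\W(\sF_Z)=Z$ set-theoretically. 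Together with $\W(\sE)=\W(\sF_Z)$ this yields $\W(\sE)=Z$.

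I do not expect a genuine obstacle here: the substance is entirely contained in Theorems~\ref{max} and \ref{main}, and what is left is checking their hypotheses. The only step needing a little care is the reduction to the non-degenerate situation, handled by the elementary remark that a degenerate finite subscheme always sits inside a KW variety of type $(0;2)$.
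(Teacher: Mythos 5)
Your argument is correct and is exactly the derivation the paper intends (the corollary is stated without proof immediately after Theorem~\ref{max}): torsion-freeness rules out $\sO_{H_{z_i}}$ as a direct summand, so Theorem~\ref{max} gives $\sE\cong\sF_Z$, and Theorem~\ref{main} then forces $\W(\sF_Z)=Z$. Your extra check that a degenerate $Z$ would sit in a KW variety of type $(0;2)$, so that the non-degeneracy hypothesis of Theorem~\ref{main} is automatic here, is a worthwhile detail the paper leaves implicit.
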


The following proposition gives an elementary way to write down the
matrix $M_Z$.

\begin{prop}
  Let $Z = \{z_1,\ldots,z_\ell\}$ be a non-degenerate Torelli arrangement, and
  consider the equations $f_1,\ldots,f_\ell$ of the $\ell$ hyperplanes
  of $\p^n$.
  Then, up to permutation of $1,\ldots,\ell$,
  there are constants $\alpha_{i,j}$ such that:
  \begin{equation}
    \label{easy}
    f_\ell = \sum_{i = 1,\ldots,\ell-1} \alpha_{i,j} f_i,    
  \end{equation}
  for all $j=1,\ldots,\ell-n-1$, and the matrix $M_Z$ can be written as:
  \[
  M = \left( \begin{array}{ccc}
    \alpha_{1,1} f_1 & \cdots & \alpha_{\ell,1} f_{\ell-1} \\
    \vdots && \vdots \\
    \alpha_{1,\ell-n-1} f_1 & \cdots & \alpha_{\ell,\ell-n-1} f_{\ell-1}
    \end{array} \right).
  \]
\end{prop}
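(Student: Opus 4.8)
The plan is to produce the stated shape of $M_Z$ by lifting the residue sequence to free resolutions. Since $Z$ is reduced and non-degenerate, Proposition~\ref{iso} gives the residue sequence
\begin{equation} \label{eq:res-prop}
0 \to \Omega_{\p^n} \to \sF_Z \xr{\mathrm{res}} \bigoplus_{i=1}^{\ell} \sO_{H_{z_i}} \to 0 ,
\end{equation}
and Proposition~\ref{torsion} gives $0 \to \sO_{\p^n}(-1)^{\ell-n-1} \xr{M_Z^t} \sO_{\p^n}^{\ell-1} \to \sF_Z \to 0$. (We may assume $\ell \ge n+2$, otherwise $M_Z$ is the empty matrix and there is nothing to prove.) I would resolve $\bigoplus_i \sO_{H_{z_i}}$ by $0 \to \sO_{\p^n}(-1)^{\ell} \xr{\Delta} \sO_{\p^n}^{\ell} \to \bigoplus_i \sO_{H_{z_i}} \to 0$ with $\Delta = \mathrm{diag}(f_1,\dots,f_\ell)$. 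Using $\Hom(\sO_{\p^n},\sO_{\p^n}(-1)) = 0$, the composite $\sigma \colon \sO_{\p^n}^{\ell-1} \epi \sF_Z \xr{\mathrm{res}} \bigoplus_i \sO_{H_{z_i}}$ lifts uniquely to a scalar matrix $A \in \mathrm{Mat}_{\ell,\ell-1}(\kk)$, and since $\sigma$ kills the image of $M_Z^t$, the composite $A M_Z^t$ factors through $\Delta$: there is a unique scalar matrix $\mu \in \mathrm{Mat}_{\ell,\ell-n-1}(\kk)$ with
\begin{equation} \label{eq:AMF-prop}
A\, M_Z^t = \mathrm{diag}(f_1,\dots,f_\ell)\, \mu .
\end{equation}

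The next step is to pin down $A$. From the resolution of $\sF_Z$ one has $\kk^{\ell-1} = \HH^0(\sO_{\p^n}^{\ell-1}) \xr{\sim} \HH^0(\sF_Z)$, and from \eqref{eq:res-prop}, together with $\HH^0(\Omega_{\p^n}) = 0$ and the fact that the connecting map $\kk^\ell = \HH^0(\bigoplus_i \sO_{H_{z_i}}) \to \HH^1(\Omega_{\p^n}) = \kk$ is $(c_i) \mapsto \sum_i c_i$ (all hyperplanes have the same class), the map $\HH^0(\mathrm{res})$ is injective with image the hyperplane $\{\sum_i c_i = 0\} \subset \kk^\ell$. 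Hence, taking $\HH^0$ of $\sigma$, the matrix $A$ is the inclusion of this hyperplane into $\kk^\ell$ written in some basis of the source. Now I would permute the points so that $f_\ell$ lies in the span of $f_1,\dots,f_{\ell-1}$ — possible because $\ell \ge n+2$ and the $f_i$ span the $(n{+}1)$-dimensional space $\HH^0(\sO_{\p^n}(1))$, so not all $f_i$ can be coloops — and choose the basis $v_1,\dots,v_{\ell-1}$ of $\HH^0(\sF_Z)$ with $\mathrm{res}(v_k) = \mathrm{e}_k - \mathrm{e}_\ell$. Then $A = \bigl(\begin{smallmatrix} I_{\ell-1} \\ -\mathbf{1}^t \end{smallmatrix}\bigr)$ with $\mathbf{1} = (1,\dots,1)$. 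Splitting $\mu = \bigl(\begin{smallmatrix}\mu'\\ \mu_\ell\end{smallmatrix}\bigr)$ accordingly in \eqref{eq:AMF-prop}, the first $\ell-1$ rows give $(M_Z^t)_{k,j} = f_k\, \mu'_{k,j}$, and the last row gives
\begin{equation} \label{eq:rel-prop}
\sum_{k=1}^{\ell-1} \mu'_{k,j}\, f_k = -(\mu_\ell)_j\, f_\ell , \qquad j = 1,\dots,\ell-n-1 .
\end{equation}

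Finally I normalize. Since $M_Z^t$ is injective as a map of sheaves (it is the first map of the resolution in Proposition~\ref{torsion}), its $\ell-n-1$ columns are linearly independent over $\kk$; as the $k$-th entry of the $j$-th column is $f_k\mu'_{k,j}$, this forces the vectors $\mu'_{\cdot,j} \in \kk^{\ell-1}$ to be linearly independent. By \eqref{eq:rel-prop} the $\ell-n-1$ vectors $(\mu'_{\cdot,j}, (\mu_\ell)_j) \in \kk^\ell$ are then linearly independent, and since the space of linear relations among $f_1,\dots,f_\ell$ has dimension $\ell - (n+1)$ they form a basis of it; in particular $\mu_\ell \neq 0$, consistently with our choice of $f_\ell$. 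Replacing the chosen basis of the rank-$(\ell-n-1)$ free summand in the resolution — i.e. right-multiplying $M_Z^t$ and $\mu$ by a suitable $R \in \GL_{\ell-n-1}(\kk)$ — I can first arrange $(\mu_\ell)_j \neq 0$ for every $j$ and then rescale the $j$-th basis vector by $-1/(\mu_\ell)_j$; letting $\alpha_{k,j}$ be the resulting ratio $(M_Z^t)_{k,j}/f_k$, relation \eqref{eq:rel-prop} becomes $f_\ell = \sum_{k=1}^{\ell-1} \alpha_{k,j} f_k$ for all $j$, while $(M_Z^t)_{k,j} = \alpha_{k,j} f_k$, so $M_Z$ takes exactly the displayed form (and the $\alpha_{\cdot,j}$ remain independent, so the $\ell-n-1$ relations are genuinely distinct). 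The delicate point is this bookkeeping: identifying $A$ with the residue map on global sections so that it has the shape $\bigl(\begin{smallmatrix}I\\ -\mathbf{1}^t\end{smallmatrix}\bigr)$, and checking that the relations read off from the rows of $M_Z$ span the whole relation space, which is what allows the final normalization of $\mu_\ell$ to $-\mathbf{1}^t$.
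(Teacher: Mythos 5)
Your argument is correct, but it is a genuinely different proof from the one in the paper. The paper runs the logic in the opposite direction: it first builds the candidate matrix $M$ out of $\ell-n-1$ independent relations $f_\ell=\sum_i\alpha_{i,j}f_i$, observes that every $H_i$ ($i\le\ell-1$) is unstable for the cokernel $\sE$ of $M\tra$ because the $i$-th column of $M$ vanishes on $H_i$, that $H_\ell$ is unstable because of \eqref{easy}, and then invokes the Torelli hypothesis together with Theorem \ref{max} to conclude $\sE\cong\sF_Z$ and hence that $M$ is a presentation matrix for $\sF_Z$. You instead start from $\sF_Z$ itself, lift the residue sequence of Proposition \ref{iso} to the Steiner resolution and to $\mathrm{diag}(f_1,\dots,f_\ell)$, and read off the shape of $M_Z$ by pure linear algebra. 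What your route buys is that the Torelli hypothesis is never used: you prove the presentation $\left(\alpha_{i,j}f_i\right)$ for \emph{any} reduced non-degenerate arrangement, whereas the paper's proof needs Torelli to feed Theorem \ref{max}. What the paper's route buys is brevity, since all the work is already packaged in Theorem \ref{max}. Two points in your write-up deserve a word more of justification: (a) the identification of the connecting map $\HH^0(\bigoplus_i\sO_{H_i})\to\HH^1(\Omega_{\p^n})$ with $(c_i)\mapsto\sum_i c_i$ is exactly the statement that each component of the extension class of the residue sequence is the common class $c_1(\sO_{\p^n}(1))$ --- true, and consistent with Claim \ref{theclaim} and the discussion at the end of the proof of Theorem \ref{max}, but it is the load-bearing normalization that makes $A=\bigl(\begin{smallmatrix}I\\-\mathbf{1}^{\mathrm t}\end{smallmatrix}\bigr)$ rather than $\bigl(\begin{smallmatrix}I\\-v^{\mathrm t}\end{smallmatrix}\bigr)$ for some other $v$; and (b) choosing $R\in\GL_{\ell-n-1}(\kk)$ with all entries of $\mu_\ell R$ nonzero is a genericity argument that requires $\kk$ infinite (harmless here, but worth flagging). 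Neither point is a gap.
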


\begin{proof}
  The $\ell$ forms $f_1,\ldots,f_\ell$ span the space $V$ that
  has dimension $n+1$, hence up to reordering there
  are $\ell-n-1$ linearly independent ways of writing $f_\ell$ as combination of
  $f_1,\ldots,f_{\ell-1}$, and we have the constants $\alpha_{i,j}$.

  Now, the $i$-th column of the matrix $M$ above vanishes identically on
  the hyperplane $H_i$, which implies that $H_i$ is unstable for the
  cokernel $\sE$ of $M\tra$ for $i=1,\ldots,\ell-1$. Further, in view of \eqref{easy}, we have
  that $H_\ell$ is also unstable for $\sE$.
  Therefore, since $Z$ is Torelli we conclude that $\W(\sE)=Z$, hence,
  by the previous theorem,  $M_Z$ can be taken to be precisely $M$.
\end{proof}

\section{Decomposition of logarithmic sheaves}
\label{section-decomposition}

Here we develop a tool for studying semistability of non-Torelli
arrangements. This tool will take the form of a filtration associated
to any non-Torelli arrangement. We will use this to provide some exceptions
to Dolgachev's conjecture.

\subsection{Blowing up a linear subspace}

Let $U$ be a $k+1$-dimensional subspace of $V$, with $1 \le k \le
n-1$, and consider the subspace $\p_k=\p(U^*)$ of $\p_n=\p(V^*)$, embedded
by $i: \p(U^*) \mono \p_n$. Define $U^\perp$ as the kernel of 
the projection $V^* \to U^*$, and note that $U^\perp \cong (V/U)^*$.
Denote by $\tilde{\p}_U^n$ the blowing up of $\p^n$ along
$\p^{n-k-1}=\p(V/U) \subset \p^n$, and write 
$\pi_U : \tilde{\p}^n \to \p^k$ and $\sigma_U : \tilde{\p}^n \to \p^n$ for the two
natural projections (we will drop this index $U$ whenever possible).
In our convention, points of $\p(V)$ and $\p(U)$ are quotients of $V$
and $U$, so one can write:
\[
\tilde{\p}^n = \{(x,u) \in \p^n \times \p^k \, | \, x_{|U} = u\}.
\]

We consider $\F^k_k = \{(u,v) \in \p^k\times \p_k \, | \, u \in H_v\}$
and  $p_U$ and $q_U$ are the natural projections to $\p^k$ and $\p_k$.
In order to compare the incidence varieties $\F^n_n$ over $\p^n$
and $\F^k_k$ over $\p^k$, we consider the blown-up flag:
\[
\tilde{\F}^n_n = \{(x,u,y) \in \p^n \times \p^k \times \p_n \, 
| \, x_{|U} = u, x \in H_y\}.
\]
This blown-up flag contains the relative blown-up flag:
\[
\tilde{\F}^n_k = \{(x,u,v) \in \p^n \times \p^k \times \p_k \, 
| \, x_{|U} = u, x \in H_v\}.
\]
Projecting onto the different coordinates we get the commutative
diagrams:
\begin{equation}
  \label{diagrammoni}
\xymatrix{
& \tilde{\F}^n_n \ar[r] \ar[d] & \F^n_n \ar^{p}[d] &&  & \F^k_k \ar^{p_U}[dr] \ar[d] \\
\F^k_k  \ar[r]  \ar_{q_U}[dr]  \ar@{^(->}[ur] & \tilde{\p}^n \ar^\sigma[r] \ar^{\pi}[d] & \p^n && \tilde{\F}^n_k  \ar[ur] \ar@{^(->}[d] \ar[r] & \tilde{\F}^n_k \ar@{^(->}[d]  \ar[r] & \p_k \ar^{i}@{^(->}[d] \\
& \p^k &&& \tilde{\F}^n_n  \ar[r] & \F^n_n  \ar_{q}[r] & \p_n
}  
\end{equation}

Let us analyze the sheaf $\sF_Z$ when $Z$ is
degenerate, namely $Z$ spans a proper subspace $\p(U^*) = \p_k \subset
\p_n$. We may think that the last $n-k$ coordinates in $\p_n$ vanish
on $\p_k$.
This amounts to ask that the equations of the hyperplanes of $Z$ only
depend on the variables $x_0,\ldots,x_k$.
The same happens to the matrix $M_Z$, that now naturally defines 
the Steiner sheaf $\sF^U_Z$ over $\p^k$
associated to $Z \subset \p_k$.
Note that we have the rational map:
\[
\rho : \p^n \dashrightarrow \p^k
\]
It is tempting to look at $\rho^*(\sF_Z^U)$ as a component of $\sF_Z$,
defined by the same matrix $M_Z$, pulled back on $\p^n$ by $\rho$.
The following lemma proves that this can be done (up to resolving the
indeterminacy of $\rho$), and that the
remaining component is $(n-k)$ copies of $\sO_{\p^n}(-1)$.

\begin{lem} \label{degenerate}
  Let $Z$ be a finite length subscheme of $\p_n$, assume that $Z$
  spans a $\p_k = \p(U^*)$ with $1 \le k \le
  n-1$, and let $\sigma  = \sigma_U, \pi = \pi_U$.
  Then we have:
  \[
  \sF_Z \cong V/U \ts \sO_{\p^n}(-1) \oplus  \sigma_* \pi^*(\sF^U_Z).
  \]
\end{lem}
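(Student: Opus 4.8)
The plan is to reduce everything to the structure of $\RR p_*(q^*(\cI_Z(1)))$ established in Proposition \ref{torsion}, and then transport the computation through the blowup. First I would observe that, because $Z$ spans only $\p_k = \p(U^*)$, the linear forms cutting out the hyperplanes of $D_Z$ involve only the variables $x_0,\ldots,x_k$, so the matrix $M_Z$ has entries in $\HH^0(\p^n,\sO_{\p^n}(1))$ that are pulled back from $\p^k$ via $\rho$ (equivalently, via $\pi$ after blowing up). Thus $M_Z$, viewed over $\p^k$, is literally $M_Z^U$, and over $\p^n$ it factors as $\pi^*(M_Z^U)$ composed with the natural surjection $\sO_{\p^n}(-1)^{\ell-1} \to \pi^* \sO_{\p^k}(-1)^{\ell-1}$ — or, more precisely, we must identify which part of the source bundle is relevant.

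The cleanest route is cohomological. By Proposition \ref{iso} (formula \eqref{last}) and the cohomology sequence of \eqref{OZ}, $p_* q^*(\cI_Z(1)) = \mathscr{D}_0(Z)$ is the kernel of the functorial map $\cT_{\p^n}(-1) \to p_* q^*(\sO_Z)$. I would run the analogous construction on $\p^k$ to get $p_{U*} q_U^*(\cI_Z(1)) = \mathscr{D}_0^U(Z)$ as the kernel of $\cT_{\p^k}(-1) \to p_{U*} q_U^*(\sO_Z)$. Now pull back to the blowup $\tilde\p^n$: using the commutative diagrams \eqref{diagrammoni} and flat base change along $\pi$, one gets $\RR\sigma_* \pi^*(\mathscr{D}_0^U(Z)) \hookrightarrow \RR\sigma_* \pi^*(\cT_{\p^k}(-1))$. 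The key computation is $\RR\sigma_*\pi^*\sO_{\p^k}(-1) \cong \sO_{\p^n}(-1)$ (fibers of $\sigma$ over the blown-up center are projective spaces, and $\pi^*\sO(-1)$ restricted to them is trivial, so $\RR^{>0}$ vanishes and $\sigma_*$ gives back $\sO_{\p^n}(-1)$ by the projection formula plus $\RR\sigma_*\sO_{\tilde\p^n} = \sO_{\p^n}$), hence $\RR\sigma_*\pi^*\cT_{\p^k}(-1)$ sits in an exact sequence built from $\sO_{\p^n}(-1)^{k+1}$ and $\sO_{\p^n}(-1)$. Comparing with the Euler sequence of $\p^n$ twisted by $\sO(-1)$, namely $0 \to \sO_{\p^n}(-1) \to V\ts\sO_{\p^n} \to \cT_{\p^n}(-1) \to 0$, I would identify $\RR\sigma_*\pi^*\cT_{\p^k}(-1)$ with $\cT_{\p^n}(-1)$ minus a copy of $V/U \ts \sO_{\p^n}(-1)$ — concretely, the subsheaf of $\cT_{\p^n}(-1)$ corresponding to the sub-bundle $U\ts\sO_{\p^n}$ of $V\ts\sO_{\p^n}$, with quotient $V/U\ts\sO_{\p^n}$ splitting off because the map $p_*q^*(\sO_Z) = p_{U*}q_U^*(\sO_Z)$ is supported on the logarithmic divisor and only sees the $\p^k$-directions.

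Dualizing (applying $\RRHHom_{\p^n}(-,\sO_{\p^n}(-1))$ as in the definition of $\sF_Z$) then turns the short exact sequence $0 \to \sigma_*\pi^*(\mathscr{D}_0^U(Z)) \to \mathscr{D}_0(Z) \to V/U\ts\sO_{\p^n}(-1)^? \to 0$ — with the degenerate directions contributing the split summand — into the asserted decomposition $\sF_Z \cong V/U \ts \sO_{\p^n}(-1) \oplus \sigma_*\pi^*(\sF_Z^U)$, using that $\sigma_*$ commutes with the relevant $\RRHHom$ by Grothendieck duality and that $\sF_Z^U = \RRHHom_{\p^k}(\mathscr{D}_0^U(Z),\sO_{\p^k}(-1))$ by definition. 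The main obstacle I anticipate is the bookkeeping for the higher direct images and base change across the blowup: one must verify that $\RR^{>0}\sigma_*\pi^*(\mathscr{F}) = 0$ for the sheaves $\mathscr{F}$ in play (so that $\RR\sigma_*\pi^* = \sigma_*\pi^*$ genuinely) and that the splitting of $V/U\ts\sO_{\p^n}(-1)$ is canonical rather than just abstract — this is where the functoriality of the extension from Claim \ref{theclaim}, or a direct argument with the matrix $M_Z$ as in the worked examples, has to be invoked to pin down that the direct summand really is $(n-k)$ trivial-twist copies and not entangled with $\sigma_*\pi^*(\sF_Z^U)$.
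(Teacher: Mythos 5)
Your overall strategy---pushing the comparison between $\p^n$ and $\p^k$ through the blow-up, using base change on the diagrams \eqref{diagrammoni} and the computation $\RR\sigma_*\pi^*\sO_{\p^k}(-1)\cong\sO_{\p^n}(-1)$---is sound and close in spirit to what the paper does. But there are two genuine gaps. First, you run the argument on the underived direct image $\mathscr{D}_0(Z)=p_*q^*(\cI_Z(1))$ and then ``dualize''; this computes $\cHom_{\p^n}(\mathscr{D}_0(Z),\sO_{\p^n}(-1))\cong\Omega_{\p^n}(\log D_Z)\cong\sF_Z^{**}$ (formula \eqref{duali}), not $\sF_Z$. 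The definition of $\sF_Z$ applies $\RRHHom$ to the whole complex $\RR p_*(q^*(\cI_Z(1)))$, whose $\RR^1$ is nonzero in general and contributes torsion and cokernel terms to $\sF_Z$ via \eqref{torsion-1} and \eqref{torsion-2}; nothing in your argument tracks it. The paper avoids this by never leaving the derived category: it tensors $0\to\cI_{\p_k,\p_n}(1)\to\cI_{Z,\p_n}(1)\to i_*(\cI_{Z,\p_k}(1))\to 0$ with $q^*$, uses the Koszul resolution of $\cI_{\p_k,\p_n}(1)$ to see that $\RR p_*q^*(\cI_{\p_k,\p_n}(1))\cong U^\perp\ts\sO_{\p^n}$, and only then dualizes the resulting triangle.

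Second, the direction and the twist of your extension are wrong. With the paper's convention ($\p^n$ parametrizes quotients of $V$), the Euler sequence reads $0\to\sO_{\p^n}(-1)\to V^*\ts\sO_{\p^n}\to\cT_{\p^n}(-1)\to 0$, and since $U^*$ is a quotient of $V^*$ (not a subspace), $\sigma_*\pi^*(\cT_{\p^k}(-1))$ is a \emph{quotient} of $\cT_{\p^n}(-1)$ with kernel $U^\perp\ts\sO_{\p^n}\cong (V/U)^*\ts\sO_{\p^n}$, untwisted. Concretely, in the derivation-module picture the extra piece is the $n-k$ constant vector fields $\partial_{k+1},\dots,\partial_n$, which form a trivial subsheaf of $\mathscr{D}_0(Z)$ with the $\p^k$-part as quotient; your proposed sequence $0\to\sigma_*\pi^*(\mathscr{D}_0^U(Z))\to\mathscr{D}_0(Z)\to V/U\ts\sO_{\p^n}(-1)^{?}\to 0$ has both the arrows and the twist reversed, and the summand $V/U\ts\sO_{\p^n}(-1)$ only appears as a quotient of $\sF_Z$ after dualizing. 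Finally, you worry about whether the splitting is canonical; it need not be, since the lemma asserts only an abstract isomorphism, and the splitting follows at once from $\Ext^1_{\p^n}(\sO_{\p^n}(-1),\sigma_*\pi^*(\sF_Z^U))=\HH^1(\p^n,\sigma_*\pi^*(\sF_Z^U)(1))=0$, which is immediate from the Steiner resolution of $\sigma_*\pi^*(\sF_Z^U)$.
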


\begin{proof}
  Assume that $Z$ is contained in $\p_k = \p(U^*)$ and consider the exact
  sequence:
  \[
  0 \to \cI_{\p_k,\p_n}(1) \to \cI_{Z,\p_n}(1) \to i_*(\cI_{Z,\p_k}(1)) \to 0,
  \]
  and the Koszul complex resolving $\cI_{\p_k,\p_n}(1)$, namely:
  \[
  0 \to \sO_{\p_n}(k-n+1) \to \cdots \to
  \wedge^2 U^\perp \ts \sO_{\p_n}(-1) 
  \to U^\perp \ts \sO_{\p_n} \to \cI_{\p_k,\p_n}(1) \to 0.
  \]
  Applying $\RR p_*(q^*(-))$ to these exact sequences,
  in view of the vanishing $\RR p_*(q^*(\sO_{\p_n}(t)))$ for $2-n \le
  t \le -1$, we get a distinguished triangle:
  \[
  U^\perp \ts \sO_{\p^n} \to \RR p_*q^*(\cI_{Z}(1)) \to \RR p_*q^*(i_*(\cI_{Z,\p_k}(1)))
  \shift
  \]
  Taking $\RRHHom_{\p_n}(-,\sO_{\p_n}(-1))$, we obtain the distinguished triangle:
  \[
  \RRHHom_{\p_n}(\RR p_*q^*(i_*(\cI_{Z,\p_k}(1))),\sO_{\p_n}(-1)) \to \sF_Z \to V/U \ts \sO_{\p^n}(-1)
  \shift
  \]

  Our task is thus to prove that the leftmost complex in the triangle
  above is a sheaf isomorphic to $\sigma_* \pi^*(\sF^U_Z)$. 
  Let $\sE_Z$ be this complex, for the remaining part of the proof.
%  Let us first compute the image in $\p^n$ of $i_*(\cI_{Z,\p_k}(1))$
%  under $\RR p_*(q^*)$, composed with
%  $\RRHHom_{\p_n}(-,\sO_{\p_n}(-1))$.

  Using repeatedly commutativity of the 
  diagrams \eqref{diagrammoni} together with projection formula, it is easy to get a natural transformation:
  \[
  \RR \sigma_* (\RR \tilde{p}_U)_*\alpha^*q_U^* \cong  \RR p_*q^*i_*,
  \]
  where $\alpha$ is the projection $\tilde{\F}^n_k \to \F^k_k$.
  By smooth base change, we also have:
  \[
  (\RR \tilde{p}_U)_*\alpha^* \cong \pi^* (\RR {p}_U)_*,
  \]
  where $\tilde{p}_U$ is the projection $\tilde{\F}^n_n \to \tilde{\p}^n$.
  This gives at once the 
  natural isomorphism: 
  \begin{equation}
    \label{projection}
    \RR \sigma_* \pi^* (\RR p_U)_*q_U^*(\cI_{Z,\p_k}(1))  \cong \RR
    p_*q^*i_*(\cI_{Z,\p_k}(1)).
  \end{equation}
  Therefore, in order to compute $\sE_Z$, we have to apply
  $\RRHHom_{\p^n}(-,\sO_{\p^n}(-1))$ to the left hand side.
  But we have seen that this simply amounts to transpose a matrix
  of linear forms of size $(\ell-1) \times (\ell - k - 1)$,
  just as well as transposition is needed to 
  define $\sF_Z^U$ from $\RR (p_U)_*q_U^*(\cI_{Z,\p_k}(1))$ on $\p^k$,
  so that dualization of these complexes commutes with taking $\RR \sigma_* \pi^*$.
  Hence we have shown that $\sE_Z$ is isomorphic to $\RR \sigma_*
  \pi^*(\sF^U_Z)$, and therefore to $\sigma_*
  \pi^*(\sF^U_Z)$.

  This provides a short exact sequence:
  \[
  0 \to \sigma_* \pi^*(\sF^U_Z) \to \sF_Z \to  V/U \ts
  \sO_{\p^n}(-1)\to 0.
  \]
  We will be done once this sequence splits, which in turn would be ensured
  by the vanishing:
  \[
  \Ext^1_{\p^n}(\sO_{\p^n}(-1),\sigma_* \pi^*(\sF^U_Z)) = 0.
  \]
  But this vanishing is clear since $\sigma_* \pi^*(\sF^U_Z)$ is a
  Steiner sheaf.
 \end{proof}
In the above situation, we set:
\[ \sE_Z^U = \sigma_* \pi^*(\sF^{U}_{Z}). \]

\subsection{Decomposing non-Torelli arrangements}

Let us borrow the notations from the previous paragraph.
In particular, recall that, given a $(k+1)$-dimensional subspace $U$
of $V$, and $Z$ in $\p(U^*)$, we have a sheaf $\sF_Z^U$ over $\p(U)$,
and hence a sheaf $\sigma_* \pi^*(\sF_Z^U)$ over $\p^n=\p(V)$,
where $\sigma=\sigma^U$ and $\pi=\pi_U$ are the natural projections to $\p^n$ and
$\p(U)$ from the blow-up $\tilde{\p}^n$ of $\p^n$ along $\p(V/U)$.

\begin{lem} \label{curve}
Assume that $Z$ is contained in a rational normal curve $C$ spanning
$\p(U^*) \subset \p_n$. Then $\sF_Z^U$ is isomorphic to $\sF_{Z'}^U$, for
any other subscheme $Z'$ contained in $C$ having the same length as
$Z$.  
\end{lem}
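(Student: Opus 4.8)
The plan is to reduce the statement to a comparison of the presentation matrices of $\sF_Z^U$ and $\sF_{Z'}^U$, and to show that this matrix depends, up to equivalence, only on $C$ and on $\ell=\len Z$. Write $k+1=\dim U$, so that $\p(U^*)=\p_k$ and $C$ is a smooth rational curve of degree $k$ spanning $\p_k$; fix an isomorphism $C\cong\p^1$ and let $\iota\colon C\mono\p_k$ denote the inclusion, so $\iota^*\sO_{\p_k}(1)\cong\sO_{\p^1}(k)$. Since $\sF_Z^U$ is formed as in Proposition \ref{torsion} (with $\p^k=\p(U)$ in the role of $\p^n$), the subscheme $Z$ is non-degenerate in $\p_k$; because $C$ spans $\p_k$, any length-$\ell$ subscheme of $C$ with $\ell\ge k+1$ is non-degenerate (a nonzero binary form of degree $k$ cannot vanish on a length-$(k+1)$ subscheme of $\p^1$), so the same holds for $Z'$ and we may assume $\ell\ge k+1$. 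By the proof of Proposition \ref{torsion}, the matrix $M_Z$ in the resolution $0\to\sO_{\p^k}(-1)^{\ell-k-1}\xr{M_Z^t}\sO_{\p^k}^{\ell-1}\to\sF_Z^U\to 0$ is obtained by applying $\RR^1p_*$ to multiplication by the incidence section $\sum_j x_jz_j$; concretely, $M_Z=\sum_j x_j\,A_j$ where $A_j\colon\HH^1(\p_k,\cI_Z)\to\HH^1(\p_k,\cI_Z(1))$ is multiplication by the linear form $z_j$ in the graded module $\bigoplus_t\HH^1(\p_k,\cI_Z(t))$. Hence it suffices to produce isomorphisms $\alpha\colon\HH^1(\p_k,\cI_Z)\to\HH^1(\p_k,\cI_{Z'})$ and $\beta\colon\HH^1(\p_k,\cI_Z(1))\to\HH^1(\p_k,\cI_{Z'}(1))$ commuting with multiplication by every $z_j$: then $M_{Z'}=\beta M_Z\alpha^{-1}$ as matrices of linear forms, whence $\sF_{Z'}^U=\cok(M_{Z'}^t)\cong\cok(M_Z^t)=\sF_Z^U$.

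To construct $\alpha$ and $\beta$ I would identify the relevant cohomology groups canonically. The closed embedding $Z\subset C$ gives, for every $t$, an exact sequence $0\to\cI_C(t)\to\cI_Z(t)\to\iota_*\bigl(\sO_{\p^1}(tk-\ell)\bigr)\to 0$, and these sequences are compatible with multiplication by linear forms $z_j$, which act on the third term through the restriction $z_j|_C\in\HH^0(\p^1,\sO(k))$. Projective normality of the rational normal curve, together with $\HH^i(\p^k,\sO(t))=0$ for $i=1,2$ and $\HH^1(\p^1,\sO(t))=0$ for $t\ge 0$, gives $\HH^1(\p_k,\cI_C(t))=\HH^2(\p_k,\cI_C(t))=0$ for $t=0,1$; the long exact sequences then yield natural isomorphisms $\HH^1(\p_k,\cI_Z(t))\cong\HH^1(\p^1,\sO(tk-\ell))$ for $t=0,1$, compatible with the module structures. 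Since $\HH^0(\p_k,\sO(1))\to\HH^0(\p^1,\sO(k))$ is an isomorphism, $M_Z$ corresponds under these canonical identifications to the cup-product map $\HH^0(\p^1,\sO(k))\otimes\HH^1(\p^1,\sO(-\ell))\to\HH^1(\p^1,\sO(k-\ell))$, a datum depending only on $k$ and $\ell$. Taking $\alpha$ and $\beta$ to be the composites of the identifications for $Z$ with the inverses of those for $Z'$ produces the required intertwining, and the claim follows.

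The step that needs the most care is the compatibility of the identifications $\HH^1(\p_k,\cI_Z(t))\cong\HH^1(\p^1,\sO(tk-\ell))$ with multiplication by the $z_j$: this is exactly what ensures that the presentation matrix itself, and not merely its source and target, is unchanged when $Z$ is replaced by $Z'$. It follows from the functoriality of connecting homomorphisms applied to the morphism of short exact sequences induced by multiplication by $z_j$. The cohomology vanishings for $\cI_C$ are routine consequences of projective normality of rational normal curves and the standard cohomology of $\p^k$ and $\p^1$. Finally, I note that no torsion-freeness of $\sF_Z^U$ is used anywhere, so the argument applies verbatim to non-reduced $Z$; in fact it shows slightly more, namely that $M_Z$, hence $\sF_Z^U$, depends only on the pair $(C,\ell)$.
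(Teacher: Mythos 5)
Your argument is correct, but it takes a more explicit route than the paper's. The paper applies $\RR (p_U)_*q_U^*$ to the sequence $0 \to \cI_{C,\p(U^*)}(1) \to \cI_{Z,\p(U^*)}(1) \to \sO_C((d-\ell)p) \to 0$ and observes, from the (Eagon--Northcott) minimal free resolution of the rational normal curve, that $\RR (p_U)_*q_U^*(\cI_{C,\p(U^*)}(1))=0$; hence the whole complex $\RR (p_U)_*q_U^*(\cI_{Z,\p(U^*)}(1))$ is identified with $\RR (p_U)_*q_U^*(\sO_C((d-\ell)p))$, which visibly depends only on $\ell$. You instead work with the presentation matrix: you correctly identify $M_Z$ with the multiplication pencil $\HH^1(\cI_Z)\to\HH^1(\cI_Z(1))$ and use the vanishing of $\HH^1$ and $\HH^2$ of $\cI_C(t)$ for $t=0,1$ (the degree-$0,1$ shadow of the same resolution the paper uses) to transport it to the universal pencil $\HH^0(\p^1,\sO(k))\ts\HH^1(\p^1,\sO(-\ell))\to\HH^1(\p^1,\sO(k-\ell))$; the care you take with compatibility of the connecting maps with multiplication by the $z_j$ is exactly the point that makes this work. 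What the paper's version buys is brevity and generality: it needs neither the two-term presentation nor non-degeneracy of $Z$ in $\p(U^*)$ (your reduction to $\ell\ge k+1$ is harmless in the intended applications, but it is an extra step), and it shows the stronger statement that the full complex $\RR (p_U)_*q_U^*(\cI_Z(1))$ depends only on $\ell$. What your version buys is an explicit identification of $\sF_Z^U$ as the Steiner (Schwarzenberger-type) sheaf attached to the multiplication pencil of the rational normal curve, which meshes nicely with the Kronecker--Weierstrass normal form $N_0$ appearing in Lemma \ref{lemma-KW}.
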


\begin{proof}
  Let $\ell$ be the length of $Z$.
  We consider the  exact sequence:
  \[
  0 \to \cI_{C,\p(U^*)}(1) \to \cI_{Z,\p(U^*)}(1) \to \sO_{C}((d-\ell)p) \to 0,
  \]
  where, given an integer $a$, we write $\sO_{C}(a p)$ for a divisor
  of degree $a$ in $C$, 
  namely $a$ times a point $p \in C \cong \p^1$.
  Looking at the sheafified minimal graded free resolution of
  $\cI_{C,\p(U^*)}(1)$ over $\p(U^*)$, we see immediately that:
  \[
  \RR (p_U)_*q_U^*(\cI_{C,\p(U^*)}(1)) = 0.
  \]
  
  Therefore the complex $  \RR (p_U)_*q_U^*(\cI_{Z,\p(U^*)}(1))$ only
  depends on the value $\ell$, hence so does $\sF_Z^U$.
\end{proof}

By the previous lemma, if $C_d$ is 
a rational normal curve of degree $d$ spanning a $\p_d=\p(U^*)$, we can set:
\[
\sE^{C_d}_{\ell} = \sigma_* (\pi^* (\sF_{Z}^U)), \qquad
\]
for any subscheme $Z$ of length $\ell$ of $C_d$.

The next result gives a decomposition tool for an 
arrangement $Z$ which is contained in a KW-variety $Y$.
So, let $Y = C \cup L_1 \cup \cdots \cup L_s$, 
where $L_i=\p(U_i)=\p_{n_i}$ and $C$ is a smooth rational curve of
degree $d>0$, and the conditions 
\eqref{I} and \eqref{II} of the introduction are satisfied.
Let $y_i = C \cap L_i$.

\begin{main} \label{thm:decomposition}
  Let $Z = Z_0 \cup \cdots \cup Z_s \subset \p_n$ be a subscheme of
  length $\ell$, smooth at $y_i$ for all $i$.
  Assume that $L_i$ is the span of $Z_i$, and 
  that $Z_0 \subset C \setminus \{y_1,\ldots,y_s\}$.
  Set $\ell_i$ for the length of $Z_i$. Then:
  \begin{enumerate}[i)]
  \item \label{primo} we have a natural exact sequence:
  \begin{equation}
    \label{D}
  0 \to \bigoplus_{i=1,\ldots,s} \sE^{U_i}_{Z_i} \to \sF_Z  
  \to \sE^{C_d}_{\ell_0+s} \to 0;
  \end{equation}
  \item \label{secondo} we have the resolutions:
  \begin{align*}
    & 0 \to \sO_{\p^n}(-1)^{\ell_i - n_i-1} \to \sO_{\p^n}^{\ell_i-1}
    \to \sE^{U_i}_{Z_i} \to 0, \\
    & 0 \to \sO_{\p^n}(-1)^{\ell_0+s-d-1} \to \sO_{\p^n}^{\ell_0+s-1}
    \to \sE^{C_d}_{\ell_0+s} \to 0.
  \end{align*}
%  \item \label{dolce} the sheaf $\sE^{U_i}_{Z_i}$ is a direct summand
%    of $\sF_Z$ is $Z_i$ contains $C\cap L_i$.
  \end{enumerate}
\end{main}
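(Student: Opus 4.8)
The plan is to dispose of the numerical assertion (\ref{secondo}) first, which is essentially formal, and then to produce the exact sequence \eqref{D} by a Mayer--Vietoris computation for the ideal sheaf of $Z$ that mirrors the decomposition $Y = C\cup L_1\cup\cdots\cup L_s$, pushed through the incidence correspondence and dualized.

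For (\ref{secondo}): each $Z_i$ with $i\ge 1$ is non-degenerate in $L_i\cong\p^{n_i}$, so Proposition \ref{torsion} provides a Steiner resolution of $\sF^{U_i}_{Z_i}$ on $\p^{n_i}$ with ranks $\ell_i-n_i-1$ and $\ell_i-1$. Since $\pi=\pi_{U_i}$ is a projective bundle, $\pi^*$ is exact, and one has $\RR\sigma_*\pi^*\sO_{\p^{n_i}}=\sO_{\p^n}$ and $\RR\sigma_*\pi^*\sO_{\p^{n_i}}(-1)=\sO_{\p^n}(-1)$; the second identity follows from $\pi^*\sO_{\p^{n_i}}(1)\cong\sigma^*\sO_{\p^n}(1)\ts\sO(-E)$ together with $\RR\sigma_*\sO(E)=\sO_{\p^n}$, valid because the blown-up centre has codimension $n_i+1\ge 2$. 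Applying $\RR\sigma_*\pi^*$ to the Steiner resolution then yields the first resolution in (\ref{secondo}) for $\sE^{U_i}_{Z_i}=\sigma_*\pi^*(\sF^{U_i}_{Z_i})$. For the curve term, Lemma \ref{curve} lets us compute $\sE^{C_d}_{\ell_0+s}$ from any length-$(\ell_0+s)$ subscheme of $C$, taken non-degenerate in $\operatorname{span}(C)=\p^d$, and running the same argument on its resolution from Proposition \ref{torsion} gives the second resolution.

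For (\ref{primo}): since $Z_0\subset C\setminus\{y_1,\dots,y_s\}$, $Z_i\subset L_i$, and the $L_i$ are pairwise disjoint, the $Z_i$ are pairwise disjoint subschemes. Enlarging $Z$ to $Z^+=Z\cup\{y_1,\dots,y_s\}$ (which modifies $\cI_{Z,\p_n}$ only by a skyscraper supported on those $y_i$ not in $Z$), the scheme $Z^+$ is the union of $W=Z_0\cup\{y_1,\dots,y_s\}\subset C$ of length $\ell_0+s$ and the $\bar Z_i=Z_i\cup\{y_i\}\subset L_i$, with all pairwise intersections equal to the reduced point $y_i$; conditions \ref{I} and \ref{II} make these intersections ordinary. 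This produces, as in the Mayer--Vietoris resolution of $\sO_Y$, a four-term exact sequence on $\p_n$ of the shape
\[
0\to\cI_{Z^+,\p_n}(1)\to\sO_{\p_n}(1)\to\sO_W(1)\oplus\bigoplus_{i=1}^s\sO_{\bar Z_i}(1)\to\bigoplus_{i=1}^s\sO_{y_i}\to 0,
\]
with $W$ and the $\bar Z_i$ regarded as subschemes of $\operatorname{span}(C)$ and of the $L_i$. Applying $\RR p_*q^*(-)$ and using the compatibility between the incidence varieties and the blow-ups from the proof of Lemma \ref{degenerate} (the isomorphism \eqref{projection}), the contributions of $W$ and the $\bar Z_i$ turn into $\RR\sigma_*\pi^*$ of the corresponding complexes on $\operatorname{span}(C)$ and the $L_i$, while $\RR p_*q^*\sO_{\p_n}(1)=\cT_{\p^n}(-1)$ and the skyscrapers yield $s$ copies of $\RR p_*q^*\sO_{y_i}(1)\cong\sO_{H_{y_i}}$; these last terms, together with $\cT_{\p^n}(-1)$, assemble into the $\Omega$-pieces familiar from \eqref{OZ}--\eqref{extension}. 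Dualizing by $\RRHHom_{\p^n}(-,\sO_{\p^n}(-1))$ converts the resulting triangle into a short exact sequence whose quotient is identified, through its defining formula and Lemma \ref{curve}, with $\sE^{C_d}_{\ell_0+s}$, and whose subobject is $\bigoplus_i\sE^{U_i}_{Z_i}$ by Lemma \ref{degenerate}; finally we pass back from $Z^+$ to $Z$, the auxiliary skyscrapers being reabsorbed, again via Lemma \ref{curve}, into the length count of the curve component, which leaves exactly \eqref{D}.

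The main obstacle is the bookkeeping in the middle step: one must control the higher direct images $\RR^{>0}p_*q^*$ so that the displayed four-term sequence becomes an honest short exact sequence of sheaves on $\p^n$, not merely an exact triangle with residual cohomology. The key levers are the vanishing $\RR p_*q^*\sO_{\p_n}(t)=0$ for $2-n\le t\le -1$ (which annihilates the Koszul tails of the linear spans $\operatorname{span}(C)$ and $L_i$, exactly as in Lemmas \ref{degenerate} and \ref{curve}), the fact from the proof of Proposition \ref{torsion} that $\RR^1p_*q^*(\cI_{\bullet}(1))$ is torsion and maps correctly into the relevant $\cExt$-sheaves, and a careful sign and index count at the nodes $y_i$ to see that the $s$ correction terms cancel. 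The minor nuisance of treating the cases $y_i\in Z$ and $y_i\notin Z$ uniformly is precisely what is handled by the detour through $Z^+$.
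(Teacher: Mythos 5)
Your part (\ref{secondo}) is correct and is essentially what the paper does: push the Steiner resolution of $\sF^{U_i}_{Z_i}$ given by Proposition \ref{torsion} through $\RR\sigma_*\pi^*$, using $\RR\sigma_*\pi^*\sO(-1)\cong\sO_{\p^n}(-1)$; this is exactly what Lemma \ref{degenerate} provides.

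Part (\ref{primo}) has a genuine gap. Your Mayer--Vietoris sequence for $\sO_{Z^+}$, after applying $\RR p_*q^*$ and dualizing, only reproduces the residue extension
\[
0 \to \Omega_{\p^n} \to \sF_{Z^+} \to \bigoplus_{z\in Z^+}\sO_{H_z} \to 0
\]
together with a decomposition of the \emph{skyscraper} part. But the filtration \eqref{D} requires the single rank-$n$ piece $\Omega_{\p^n}$ to be distributed between a subobject of rank $\sum n_i$ and a quotient of rank $d$; $\Omega_{\p^n}$ is not a direct sum of pieces indexed by the $U_i$ and by $U_0$, and the sub-extension of $\sF_{Z^+}$ lying over $\bigoplus_{z\in\bar Z_i}\sO_{H_z}$ contains all of $\Omega_{\p^n}$ (by Lemma \ref{degenerate} it is $V/U_i\ts\sO_{\p^n}(-1)\oplus\sE^{U_i}_{\bar Z_i}$, not $\sE^{U_i}_{Z_i}$). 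The step where you say the terms ``assemble into the $\Omega$-pieces'' and that ``the $s$ correction terms cancel'' is precisely the content of the theorem, and nothing in your argument supplies it; in particular you never use the defining conditions of a KW variety ($L\cap L_i$ a single point of $C$, the $L_i$ pairwise disjoint, $d+n_1+\cdots+n_s=n$), without which the conclusion fails.

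The paper's mechanism for this is a vanishing you have no substitute for: $\RR p_*q^*(\cI_{Y,\p_n}(1))=0$ for the KW variety $Y$ (Claim \ref{Y}, proved via Beilinson's theorem from $\HH^k(\p_n,\cI_Y(t))=0$ for $t=0,1$, which is exactly where $d+\sum n_i=n$ enters). This allows one to replace $\cI_{Z,\p_n}(1)$ by $\cI_{Z,Y}(1)$, so that no global $\Omega_{\p^n}$ term ever appears, and then to filter $\cI_{Z,Y}(1)$ by the sequence $0\to\sO_C((d-\ell_0-s)p)\to\cI_{Z,Y}(1)\to\bigoplus_i\cI_{Z_i,L_i}(1)\to 0$, to which $\RR p_*q^*$ and duality are applied termwise. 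To repair your proof you would need to establish this vanishing (or an equivalent statement) and route the argument through the ideal of $Y$ rather than through $\sO_{Z^+}$.
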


\begin{proof}
  Since $Z$ lies in $Y = C \cup L_1 \cup \cdots \cup L_s$, we have the
  sequences:
  \begin{align} \label{ideali}
    & 0 \to \cI_{Y,\p_n}(1) \to \cI_{Z,\p_n}(1) \to \cI_{Z,Y}(1) \to 0.
  \end{align}

  The following claim ensures that $\cI_{Y,\p_n}(1)$ does not
  contribute to $\sF_Z$.

  \begin{claim} \label{Y}
    Given $Y$ as above, we have $\RR p_* q^*(\cI_{Y,\p_n}(1))=0$.
  \end{claim}

  Let us postpone the proof of the claim above, and assume it for
  the time being.
  Set $\LL = L_1 \cup \cdots \cup L_s$, $Z' = Z_1 \cup
  \cdots \cup Z_s$ and $Z_0' = Z_0 \cup y_1 \cup \cdots \cup y_s$.

  By the definition of $Y$ and the hypothesis on $Z$ we deduce the
  following exact commutative exact diagram:
  \begin{equation}
    \label{diagrammozzo}
  \xymatrix@-2ex{
    & 0 \ar[d] & 0 \ar[d] & 0 \ar[d] \\
    0 \ar[r] & \cI_{Z_0',C}(1) \ar[d] \ar[r] & \sO_C((d-s)p) \ar[r]
    \ar[d] & \sO_{Z_0} \ar[r] \ar[d] & 0\\
    0 \ar[r] & \cI_{Z,Y}(1)  \ar[d] \ar[r] & \sO_Y(1) \ar[r] \ar[d] &
    \sO_{Z} \ar[r]  \ar[d] &  0\\
    0 \ar[r] & \cI_{Z',\LL}(1) \ar[d] \ar[r] & \sO_{\LL}(1) \ar[r] \ar[d] & \sO_{Z'}
    \ar[r] \ar[d] & 0\\
    & 0 & 0 & 0
  }
  \end{equation}
  Here, $p$ is a point in $C \cong \p^1$.
  Moreover, clearly we have:
  \begin{equation}
    \label{ideali-4}
  \cI_{Z',\LL}(1) \cong \bigoplus_{i=1,\ldots,s} \cI_{Z_i,L_i}(1).
  \end{equation}
  Hence, we may rewrite the leftmost column of the above diagram as:
  \begin{equation}
    \label{ideali-3}
     0 \to \sO_C((-s-\ell_0+d) p) \to \cI_{Z,Y}(1) \to
    \bigoplus_{i=1,\ldots,s} \cI_{Z_i,L_i}(1) \to  0.
  \end{equation}

  Notice also that we can switch the roles of $C$ and $\LL$, to obtain:
  \begin{align}
    \label{ideali-2} & 0 \to \bigoplus_{i=1,\ldots,s} \cI_{y_i,L_i}(1) \to \sO_{Y}(1)
    \to \sO_{C}(1) \to 0.
  \end{align}

  Applying the functor $\RR p_*(q^*(-))$ to the exact sequence
  \eqref{ideali} and 
  dualizing, we have, 
  in view of Claim \ref{Y}:
  \[
  \sF_Z \cong  \RRHHom_{\p^n}(\RR p_*(q^*(\cI_{Z,Y}(1))),\sO_{\p^n}(-1)).
  \]

  Applying $\RR p_*(q^*(-))$
  and $\RRHHom_{\p^n}(-,\sO_{\p^n}(-1))$ to \eqref{ideali-3}
  gives the desired exact sequence \eqref{D}. Indeed,
  For each of the terms $\cI_{y_i,L_i}(1)$ appearing in
  the isomorphisms \eqref{ideali-4}, we can use the argument
  used in Lemma \ref{degenerate}, that gives:
  \[
  \RRHHom_{\p^n}(\RR p_*(q^*(\cI_{y_i,L_i}(1))),\sO_{\p^n}(-1)) \cong 
  \sigma^{U_i}_* \pi_{U_i}^*(\sF^{U_i}_{Z_i}) = 
  \sE_{Z_i}^{U_i}.
  \]
  For $\sO_C(d - \ell_0 - s)$ we use 
  the same argument and Lemma \ref{curve} to obtain:
  \[
  \RRHHom_{\p^n}(\RR p_*(q^*(\sO_C(d - \ell_0 - s))),\sO_{\p^n}(-1)) \cong 
  \sigma^{U_0}_* \pi_{U_0}^*(\sF^{U_0}_{Z_0'}) = 
  \sE_{\ell_0 + s}^{C_d}.
  \]

  We thus proved \eqref{primo}.
  The resolutions required for \eqref{secondo}
  are provided by Lemma \ref{degenerate}.
%  To show \eqref{dolce}, we note that, if $y_i$ belongs to $Z$, 
%  then we an exact sequence:
%  \[
%  0 \to \sO_C() \to \cI_{} \to 0
%  \]
  It remains to prove Claim \ref{Y}.
\end{proof}

\begin{proof}[Proof of Claim \ref{Y}]
  Looking at \eqref{incidence}, we see that the claim follows if we
  prove that $\cI_Y(1)$ is the cohomology of a complex where only the
  sheaves $\sO_{\p_n}(1-n), \ldots , \sO_{\p_n}(-1)$ appear.
  We can use Beilinson's theorem to prove that this is the case.
  In fact we merely have to prove the following vanishing results:
  \begin{equation}
    \label{vanishing}
    \HH^k(\p_n,\cI_Y(t)) = 0, \qquad \mbox{for all $k$, and for $t = 0,1$}.
  \end{equation}
  
  To show this, we look at \eqref{ideali-2}.
  Since $d + n_1 + \cdots + n_s = n$,
  taking cohomology of this sequence, we get :
  \[  
  \HH^k(\p_n,\sO_Y(1)) = 0, \qquad \mbox{for all $k > 0$},     
    \qquad \dim_\kk \HH^0(\p_n,\sO_Y(1)) = n+1.  
    \]
  Hence we have \eqref{vanishing} for $t=1$, for $Y$ is non-degenerate.

  Taking cohomology of \eqref{ideali-2}, twisted by $\sO_{\p_n}(-1)$
  immediately gives \eqref{vanishing} for $t=0$, and we are done.
\end{proof}

\begin{corol} \label{split}
  With the notations of the previous theorem, 
   $\sE_{Z_i}^{U_i}$ is a direct summand of $\sF_Z$ if
  $y_i$ belongs to $Z$.
\end{corol}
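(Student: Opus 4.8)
The plan is to exploit the exact sequence \eqref{D} from Theorem \ref{thm:decomposition} together with Claim \ref{theclaim} (the uniqueness of a functorial extension), in the spirit of the proof of Theorem \ref{max}. Recall that $Z$ is contained in the KW variety $Y = C \cup L_1 \cup \cdots \cup L_s$, and that \eqref{D} reads
\[
0 \to \bigoplus_{j=1,\ldots,s} \sE^{U_j}_{Z_j} \to \sF_Z \to \sE^{C_d}_{\ell_0+s} \to 0.
\]
Now suppose $y_i \in Z$, say $y_i \in Z_i$ (so $Z_i$ contains a point on $C$). First I would observe that, because $y_i$ lies both on $C$ and on $L_i$, the hyperplane $H_{y_i}$ is an unstable hyperplane for the summand $\sE^{U_i}_{Z_i}$, and that the corresponding sheaf $\sO_{H_{y_i}}$ splits off. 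More precisely, using Lemma \ref{degenerate} (applied to $U_i$) the sheaf $\sE^{U_i}_{Z_i}$ is of the form $V/U_i \ts \sO_{\p^n}(-1) \oplus \sigma^{U_i}_* \pi_{U_i}^*(\sF^{U_i}_{Z_i})$, and since $y_i \in Z_i \subset \p(U_i^*)$, the point $y_i$ is unstable for $\sF^{U_i}_{Z_i}$ on $\p(U_i)$ (by the Lemma of the introduction: every point of $Z$ is unstable for $\sF_Z$). Pulling back and pushing forward, $H_{y_i}$ stays unstable for $\sE^{U_i}_{Z_i}$.

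The key step is then to promote this to a direct-summand statement for $\sF_Z$ itself. The cleanest route is to run the analysis of the proof of Theorem \ref{max}: from the resolution $0 \to \sO_{\p^n}(-1)^{\ell-n-1} \to \sO_{\p^n}^{\ell-1} \to \sF_Z \to 0$ (Proposition \ref{torsion}) one gets $\Hom_{\p^n}(\sF_Z,\sO_{H_{y_i}})^* \cong \HH^{n-1}(\p^n,\sF_Z\ts\sO_{H_{y_i}}(-n))$, which is nonzero since $y_i \in Z \subset \W(\sF_Z)$; moreover any nonzero such map is surjective with Steiner kernel. So we have a surjection $\sF_Z \epi \sO_{H_{y_i}}$. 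I would then match this against the surjection $\sE^{U_i}_{Z_i} \epi \sO_{H_{y_i}}$ coming from the splitting in Lemma \ref{degenerate}, and show — via the vanishing of the relevant $\Ext^1$ between Steiner sheaves and $\sO_{\p^n}(-1)$, exactly as at the end of the proof of Lemma \ref{degenerate} — that the subsheaf $\bigoplus_j \sE^{U_j}_{Z_j}$ in \eqref{D} retracts onto $\sE^{U_i}_{Z_i}$ compatibly with these maps. A slicker phrasing: apply $\RRHHom_{\p^n}(-,\sO_{\p^n}(-1))$ to \eqref{D} and to the analogous decompositions of the $\cI$-sheaves appearing in \eqref{ideali-3}–\eqref{ideali-4}; since $\cI_{Z',\LL}(1) \cong \bigoplus_j \cI_{Z_j,L_j}(1)$ splits as a direct sum and the summand indexed by $i$ already "sees" $y_i \in Z_i$, the functoriality in Claim \ref{theclaim} forces the extension \eqref{D} to split off the $i$-th piece.

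The main obstacle I anticipate is the bookkeeping needed to show that the splitting is compatible with the two exact sequences — i.e. that the retraction $\sF_Z \to \sE^{U_i}_{Z_i}$ exists as a map of sheaves on $\p^n$, not merely that $\sE^{U_i}_{Z_i}$ appears as a subquotient. Here the decisive input is that $\sE^{C_d}_{\ell_0+s}$ is a Steiner sheaf (Theorem \ref{thm:decomposition}\eqref{secondo}), so $\Ext^1_{\p^n}(\sE^{C_d}_{\ell_0+s}, \sE^{U_i}_{Z_i})$ controlling the obstruction can be computed from the Steiner resolutions and shown to behave well; combined with the fact that the complementary summand $\bigoplus_{j\neq i}\sE^{U_j}_{Z_j}$ does not interfere (the $L_j$ are mutually disjoint, so the relevant restriction maps decouple), one concludes that \eqref{D} splits off $\sE^{U_i}_{Z_i}$ as a direct summand, hence so does $\sF_Z$.
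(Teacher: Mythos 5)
Your proposal correctly identifies the target --- splitting off the $i$-th summand of the subsheaf in the extension \eqref{D} --- but the argument you sketch for why this happens does not work, and the hypothesis $y_i\in Z$ never enters in a decisive way. The central problem is your ``decisive input'': you claim $\Ext^1_{\p^n}(\sE^{C_d}_{\ell_0+s},\sE^{U_i}_{Z_i})$ ``can be shown to behave well.'' If that means it vanishes, it is false in general (for Steiner sheaves $\sE,\sE'$ with resolutions of the usual shape, $\Ext^1(\sE,\sE')$ is the cokernel of $\HH^0(\p^n,\sE')^b\to\HH^0(\p^n,\sE'(1))^a$, typically nonzero); and it had better not vanish, since then \eqref{D} would split off $\sE^{U_i}_{Z_i}$ for \emph{every} $i$ with no hypothesis on $y_i$. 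The real content is that the particular extension class of \eqref{D} has zero component in the $i$-th factor exactly when $y_i\in Z$, and nothing in your proposal establishes this. The auxiliary steps are also off: an unstable hyperplane yields a surjection $\sE\epi\sO_H$ (as in Theorem \ref{max}), not a direct summand $\sO_H$; Lemma \ref{degenerate} decomposes $\sF_{Z_i}$, not $\sE^{U_i}_{Z_i}$ (the latter \emph{is} one of its two summands); having surjections $\sF_Z\epi\sO_{H_{y_i}}$ and $\sE^{U_i}_{Z_i}\epi\sO_{H_{y_i}}$ does not produce a retraction $\sF_Z\to\sE^{U_i}_{Z_i}$; and Claim \ref{theclaim} concerns extensions of $\bigoplus_j\sO_{H_{z_j}}$ by $\Omega_{\p^n}$, so it does not apply to \eqref{D}.

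The missing idea --- and the one the paper uses --- is to work upstream with ideal sheaves on $Y$ and produce a \emph{second} filtration of $\cI_{Z,Y}(1)$ by partially switching the roles of $C$ and the $L_j$, in the spirit of \eqref{ideali-2}. With $r$ the number of $y_j$ lying in $Z$, one obtains
\[
0 \to \bigoplus_{y_j\in Z}\cI_{Z_j,L_j}(1)\ \oplus \bigoplus_{y_j\notin Z}\cI_{Z_j\cup y_j,L_j}(1) \to \cI_{Z,Y}(1)\to \sO_C((d-r-\ell_0)p)\to 0,
\]
in which $\cI_{Z_i,L_i}(1)$ appears as a \emph{sub}object precisely because $Z\cap L_i=Z_i$ already contains $y_i$; for $y_j\notin Z$ the subobject is the strictly smaller $\cI_{Z_j\cup y_j,L_j}(1)$, and this is exactly where the hypothesis is used. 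Comparing with \eqref{ideali-3}, where $\cI_{Z_i,L_i}(1)$ is a quotient of $\cI_{Z,Y}(1)$, the composite sub-then-quotient map is the identity, so $\cI_{Z_i,L_i}(1)$ is a direct summand of $\cI_{Z,Y}(1)$; applying the additive functor $\RRHHom_{\p^n}(\RR p_*q^*(-),\sO_{\p^n}(-1))$ then exhibits $\sE^{U_i}_{Z_i}$ as a direct summand of $\sF_Z$.
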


\begin{proof}
  Order $1,\ldots,s$ so
  that $y_1,\ldots,y_r$ belong to $Z$ and $y_{r+1},\ldots,y_s$ do
  not.
  Using \eqref{ideali-2} and a diagram similar to \eqref{diagrammozzo},
  we get an exact sequence:
  \[
  0 \to \bigoplus_{i=1,\ldots,r} \cI_{Z_i, L_i}(1) \oplus
  \bigoplus_{i=r+1,\ldots,s} \cI_{Z_i \cup y_i, L_i} (1)
 \to \cI_{Z,Y}(1) \to  
  \sO_C((d-r-\ell_0)p) \to 0.
  \]

  Comparing with \eqref{ideali-3}, we see that, for $i=1,\ldots,r$,
  $\cI_{Z_i, L_i}(1)$ is a direct summand of $\cI_{Z,Y}(1)$, so that 
  $\sE_{Z_i}^{U_i}$ is a direct summand of $\sF_Z$.
\end{proof}

%%% **** I DON'T KNOW IF IT IS IF AND ONLY IF

\subsection{Exceptions to Dolgachev's conjecture}

We conclude the paper with some examples of 
hyperplane arrangements having interesting unstable loci,
giving some counterexamples to 
the ``only if'' implication of Dolgachev's conjecture.
Namely, we describe
finite sets $Z$ in $\p_n$ such that $\W(\sF_Z)$ is the union of $Z$
and a line in $\p_3$, or $Z$ and a plane in $\p_4$, or $Z$ and a point in $\p_4$.
The results of this section are used to prove semistability in some cases.

\begin{eg} \label{NO}
  We consider the union $Z_1$ of $5$
  points on a unique conic, spanning 
  a plane $L_1$ in $\p_3$, and the union $Z_0$ of $2$ more
  points on a line $L_0$.
  We assume that  $L_0$ does not meet
  the conic $D\subset L_1$ passing through $Z_1$, and that $Z_0 \cap
  L_1 = \emptyset$. We let $Z = Z_0 \cup Z_1$.
  \begin{figure}[h!]
    \centering
    \includegraphics[height=1.5in]{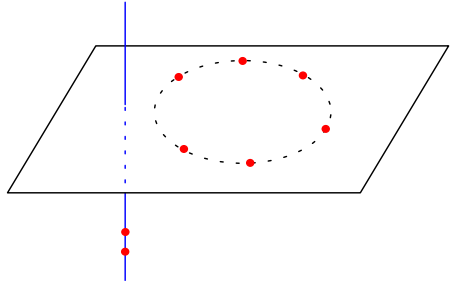}
    \caption{Seven points in $\p_3$ with an unstable line.}
    \label{fig:7-points}
  \end{figure}

 Consider a point $y$ of $L_0$. Then there are a rational normal curve
 through $y$ (take $L_0$) and a plane (take $L_1$) such that $L_0 \cup
 L_1$ contains $Z$, and satisfying \eqref{I} and \eqref{II}.
 Thus all points of $L_0$ are unstable, and $Z$ is not Torelli.

 On the other hand, if $y \not \in Z$ does not lie in $L_0$, then
 $y$ is not unstable for $\sF_Z$. Indeed, any subvariety $Y \subset
 \p_n$ through $y$ and $Z$ as in Theorem \ref{main} would have to
 contain $Z_1$ and $L$, hence be $L_0 \cup L_1$.
 So $y$ has to lie in
 $L_1$.
 But even the points of
 $L_1 \setminus Z$ are not unstable, for we should have a conic in
 $L_1$ through $y$ and $Z_1$ (hence the conic is $D$) and a line
 through $Z_1$ (hence the line is $L_0$) meeting at a single point; 
 but $D$ does not pass through $L_0 \cap L_1$.

 Finally, note that $\sF_Z$ is a stable sheaf, at least for most
 choices of the $5$ points of $Z_1$.
 In fact, let us prove it under the assumption
 that $Z_1=\{\zeta_1,\ldots,\zeta_5\}$ is such that $\zeta_3$ lies in intersection
 of the lines $N_1$ and $N_2$ through $\zeta_1,\zeta_2$ and $\zeta_4,\zeta_5$ (still 
 $D = N_1 \cup N_2$ disjoint from $L_0$).
 In this case, Theorem \ref{thm:decomposition} applies to
 give a short exact sequence:
 \[
 0 \to \sF_1 \to \sF_Z \to \sF_0 \to 0,
 \]
 where $\sF_1$ is $\sE^{U_1}_{Z_1}$ (we set $L_i = \p(U_i)$) and
 $\sF_0$ is $\sE^{L_0}_{-3}$, which in this case is isomorphic to
 $\cI_{M_0}(1)$, where $M_0$ is the line dual to $L_0$.
 Here 
 $\sF_1$ splits, in view of Corollary \ref{split}, as $\cI_{M_1}(1)
 \oplus \cI_{M_2}(1)$, where the $M_i$'s are the lines dual to the $N_i$'s.
% Also, note that $\sF_1$ is not a direct summand of
% $\sF_Z$, for all points of $D = N_1 \cup N_2$ are unstable for
% $\sF_1$, but only $Z_1$ is unstable for $\sF_Z$.
 Then, it is straightforward to check that $\sF_Z$ is
 strictly semistable, for the graded object associated to the above
 filtration of $\sF_Z$ is $\cI_{M_0}(1) \oplus \cI_{M_1}(1) \oplus
 \cI_{M_2}(1)$.

 In coordinates, we could take $L_0$ as $\{z_2 = z_3 = 0\}$ and $L_1$
 as $\{z_1 = 0\}$. Further, $N_1$ and $N_2$ can be taken as
 $\{z_0-z_2 = z_1 = 0\}$ and $\{z_0-z_3 = z_1 = 0\}$, so that $\zeta_3
 = (1:0:1:1)$.
 The matrix $M_Z$ in this case is:
   \[
  M_Z=\left (
 \begin{array}{cccccc}
   x_0+x_1 & -x_1 & 0 & x_3 & 0 & x_2  \\
   0 & 0 & x_0+x_2 & x_3 & 0 & 0\\
   0 & 0 & 0 & 0  & x_0+x_3 & x_2 
 \end{array} \right ),
 \]
\end{eg}

\begin{eg} \label{NO2}
  With a little more work one can modify the above example so that
  $\sF_Z$ is even stable.
  This can be achieved adding a point on $L_0$ and a further point on
  $L_1$, outside $N_1 \cup N_2$.

  In coordinates, we can add $(1:2:0:0)$ and $(0:0:1:1)$. This gives
  rise (up to permutation) to the matrix $M_Z$:
   $$\bgroup\begin{pmatrix}{x}_{0}+{x}_{1}&
      0&
      {-{x}_{1}}&
      0&
      {x}_{3}&
      0&
      {x}_{2}&
      0\\
      0&
      {x}_{0}+2 {x}_{1}&
      {-2 {x}_{1}}&
      0&
      {x}_{3}&
      0&
      {x}_{2}&
      0\\
      0&
      0&
      0&
      {x}_{0}+{x}_{2}&
      {x}_{3}&
      0&
      0&
      0\\
      0&
      0&
      0&
      0&
      0&
      {x}_{0}+{x}_{3}&
      {x}_{2}&
      0\\
      {x}_{0}+{x}_{1}&
      0&
      {-{x}_{1}}&
      0&
      0&
      0&
      0&
      {x}_{2}+{x}_{3}\\
      \end{pmatrix}\egroup$$
      
      Stability of $\sF_Z$ can be deduced by the following resolutions:
      \begin{align*}
        & 0 \to \sO_{\p^3}(-3) \oplus \sO_{\p^3}(-2) \to \sO_{\p^3}(-2) \oplus \sO_{\p^3}(-1)^4 \to \sF_Z^{**}(-2) \to 0, \\
        & 0 \to \sO_{\p^3}(-4) \to \sO_{\p^3}(-3) \oplus \sO_{\p^3}(-1)^3 \to \sF_Z^*(1) \to 0.
      \end{align*}
\end{eg}

\begin{eg}
  Let $L_1$ and $L_2$ be two planes in $\p_4$, meeting at a single
  point $y$. Then $y$ is the distinguished point of the KW
  variety $L_1 \cup L_2$. Let $Z_1 \subset L_1$ and $Z_2 \subset L_2$
  be subschemes of length $\ell_1,\ell_2 < \infty$, both disjoint from $y$.
  Then $Z = Z_1 \cup Z_2$  cannot be Torelli, for $y$ is always an
  unstable hyperplane of $\sF_Z$.

  If there is no conic through $Z_1$ and $y$ nor through $Z_2$ and
  $y$, then $y$ is the {\it only point of $\p_4$ outside $Z$ giving an
    unstable hyperplane for $\sF_Z$}.
  If $Z_1$ consists of $3$ points such that $Z_1 \cup y$ is in
  general linear position, then for a general point $z$ of $L_1$,
  there is a conic $C$ through $z \cup y \cup Z_1$, and $Z$ is
  contained in the KW variety $C \cup L_2$. Hence any point
  of $C$ is unstable. So {\it all the points of $L_1$ give unstable
    hyperplanes} in this case.
\end{eg}

%\bibliographystyle{amsalpha}
%\bibliography{bibliography}

\begin{thebibliography}{CHKS06}

\bibitem[Arn69]{arnold:colored}
Vladimir~I. Arnol{\cprime}d, \emph{The cohomology ring of the colored braid
  group}, Math. Notes \textbf{5} (1969), 138--140.

\bibitem[Bri73]{brieskorn:tresses}
Egbert Brieskorn, \emph{Sur les groupes de tresses [d'apr\`es {V}. {I}.
  {A}rnol\cprime d]}, S\'eminaire {B}ourbaki, 24\`eme ann\'ee (1971/1972),
  {E}xp. {N}o. 401, Springer, Berlin, 1973, pp.~21--44. Lecture Notes in Math.,
  Vol. 317.

\bibitem[BCS97]{burgisser-clausen-shokrollahi}
Peter B{\"u}rgisser, Michael Clausen, and M.~Amin Shokrollahi, \emph{Algebraic
  complexity theory}, Grundlehren der Mathematischen Wissenschaften
  [Fundamental Principles of Mathematical Sciences], vol. 315, Springer-Verlag,
  Berlin, 1997, With the collaboration of Thomas Lickteig.

\bibitem[CHKS06]{catanese-hochsten-khetan-sturmfels}
Fabrizio Catanese, Serkan Ho{\c{s}}ten, Amit Khetan, and Bernd Sturmfels,
  \emph{The maximum likelihood degree}, Amer. J. Math. \textbf{128} (2006),
  no.~3, 671--697.

\bibitem[Del70]{deligne:equations}
Pierre Deligne, \emph{\'{E}quations diff\'erentielles \`a points singuliers
  r\'eguliers}, Lecture Notes in Mathematics, Vol. 163, Springer-Verlag,
  Berlin, 1970.

\bibitem[DK93]{dolgachev-kapranov:arrangements}
Igor Dolgachev and Mikhail~M. Kapranov, \emph{Arrangements of hyperplanes and
  vector bundles on {$\mathbf P\sp n$}}, Duke Math. J. \textbf{71} (1993), no.~3,
  633--664.

\bibitem[Dol07]{dolgachev:logarithmic}
Igor~V. Dolgachev, \emph{Logarithmic sheaves attached to arrangements of
  hyperplanes}, J. Math. Kyoto Univ. \textbf{47} (2007), no.~1, 35--64.

\bibitem[GM96]{gelfand-manin:homological}
Sergei~I. Gelfand and Yuri~I. Manin, \emph{Methods of homological algebra},
  Springer-Verlag, Berlin, 1996, Translated from the 1988 Russian original.

\bibitem[OT92]{orlik-terao:arrangements-hyperplanes}
Peter Orlik and Hiroaki Terao, \emph{Arrangements of hyperplanes}, Grundlehren
  der Mathematischen Wissenschaften [Fundamental Principles of Mathematical
  Sciences], vol. 300, Springer-Verlag, Berlin, 1992.

\bibitem[Sai80]{saito:logarithmic}
Kyoji Saito, \emph{Theory of logarithmic differential forms and logarithmic
  vector fields}, J. Fac. Sci. Univ. Tokyo Sect. IA Math. \textbf{27} (1980),
  no.~2, 265--291.

\bibitem[Sch03]{schenck:modifications}
Henry~K. Schenck, \emph{Elementary modifications and line configurations in
  {$\Bbb P^2$}}, Comment. Math. Helv. \textbf{78} (2003), no.~3, 447--462.

\bibitem[Val00]{valles:instables}
Jean Vall{\`e}s, \emph{Nombre maximal d'hyperplans instables pour un fibr\'e de
  {S}teiner}, Math. Z. \textbf{233} (2000), no.~3, 507--514.

\bibitem[Val10]{valles:schwarzenberger-doi}
\bysame, \emph{Fibr{\'e}s de Schwarzenberger et fibr{\'e}s logarithmiques
  g{\'e}n{\'e}ralis{\'e}s}, Math. Z., to appear, 2010.

\end{thebibliography}
%\end{document}

\def\cprime{$'$} \def\cprime{$'$} \def\cprime{$'$} \def\cprime{$'$}
  \def\cprime{$'$} \def\cprime{$'$} \def\cprime{$'$}
\providecommand{\bysame}{\leavevmode\hbox to3em{\hrulefill}\thinspace}
\providecommand{\MR}{\relax\ifhmode\unskip\space\fi MR }
% \MRhref is called by the amsart/book/proc definition of \MR.
\providecommand{\MRhref}[2]{%  \href{http://www.ams.org/mathscinet-getitem?mr=#1}{#2}
}
\providecommand{\href}[2]{#2}

\end{document}